\newtheorem{thm}{Theorem}[section]
\newtheorem{lemma}[thm]{Lemma}
\newtheorem{prop}[thm]{Proposition}
\newproof{proof}{Proof}
\renewenvironment{proof}[1][Proof]{
\medskip {\noindent \bfseries #1. }}{\hfill \qed \medskip }
\newtheorem{definition}[thm]{Definition}
\newtheorem{example}{Example}[section]
\newtheorem{remark}{Remark}
\newcommand{\calC}{\mathcal{C}}
\newcommand{\calD}{\mathcal{D}}
\newcommand{\calO}{\mathcal{O}}
\newcommand{\calL}{\mathcal{L}}
\newcommand{\calS}{\mathcal{S}}
\newcommand{\calN}{\mathcal{N}}
\newcommand{\calH}{\mathcal{H}}
\newcommand{\calM}{\mathcal{M}}
\newcommand{\calV}{\mathcal{V}}
\newcommand{\calP}{\mathcal{P}}
\newcommand{\calG}{\mathcal{G}}
\newcommand{\calR}{\mathcal{R}}
\newcommand{\real}{\mathbb{R}}
\newcommand{\bbL}{\mathbb{L}}
\newcommand{\bbP}{\mathbb{P}}
\newcommand{\bbN}{\mathbb{N}}
\newcommand{\bbE}{\mathbb{E}}
\newcommand{\bbD}{\mathbb{D}}
\newcommand{\AI}{\text{AI}}
\newcommand{\calE}{\mathcal{E}}
\newcommand{\bbF}{\mathbb{F}}
\newcommand{\chol}{\text{Chol}}
\newcommand{\bfzero}{\mathbf{0}}
\newcommand{\tr}{\textsf{tr}}
\newcommand{\calU}{\mathcal{U}}
\newcommand{\parentheses}[1]{\left(#1\right)}
\newcommand{\set}[1]{\left\{#1\right\}}
\newcommand{\floor}[1]{\lfloor #1 \rfloor}
\newcommand{\PICSE}{\texttt{PICSE}}
\newcommand{\KMLE}{\texttt{KMLE}}
\newcommand{\CSE}{\texttt{CSE}}
\DeclareMathOperator{\argmin}{argmin}
\DeclareMathOperator{\Exp}{Exp}
\DeclareMathOperator{\Hess}{Hess}
\DeclareMathOperator{\grad}{grad}
\DeclareMathOperator{\diag}{diag}
\DeclareMathOperator{\sym}{\textsf{sym}}
\DeclareMathOperator{\skewed}{\textsf{skew}}
\DeclareMathOperator{\rank}{\text{rank}}
\long\def\count@stringtoks#1{\tc@earg\count@toks{\string#1}}
\long\def\count@toks#1{\the\numexpr-1\count@@toks#1.\tc@endcnt}
\long\def\count@@toks#1#2\tc@endcnt{+1\tc@ifempty{#2}{\relax}{\count@@toks#2\tc@endcnt}}
\def\tc@ifempty#1{\tc@testxifx{\expandafter\relax\detokenize{#1}\relax}}
\long\def\tc@earg#1#2{\expandafter#1\expandafter{#2}}
\long\def\tctestifnum#1{\tctestifcon{\ifnum#1\relax}}
\long\def\tctestifcon#1{#1\expandafter\tc@exfirst\else\expandafter\tc@exsecond\fi}
\long\def\tc@testxifx{\tc@earg\tctestifx}
\long\def\tctestifx#1{\tctestifcon{\ifx#1}}
\long\def\tc@exfirst#1#2{#1}
\long\def\tc@exsecond#1#2{#2}
\newcommand*{\rom}[1]{\expandafter\@slowromancap\romannumeral #1@}
\def\tsc#1{\csdef{#1}{\textsc{\lowercase{#1}}\xspace}}
\begin{document}
\let\WriteBookmarks\relax
\def\floatpagepagefraction{1}
\def\textpagefraction{.001}
\shorttitle{Core covariance geometry}
\shortauthors{B. Sung}

\title [mode = title]{Covariance estimation for matrix-variate data via fixed-rank core covariance geometry}       

\author[1]{Bongjung Sung}[orcid=0000-0002-2464-9977]
\cormark[1]
\fnmark[1]
\ead{bongjung.sung@duke.edu}

\affiliation[1]{organization={Department of Statistical Science, Duke University},
            addressline={214 Old Chemistry}, 
            city={Durham},
            postcode={27708}, 
            state={North Carolina},
            country={U.S.A}}

\cortext[cor1]{Corresponding author}

\begin{abstract}
 We study the geometry of the fixed-rank core covariance manifold arising from the Kronecker-core decomposition of covariance matrices. As shown in Hoff, McCormack, and Zhang (2023), every covariance matrix $\Sigma$ of $p_1\times p_2$ matrix-variate data uniquely decomposes into a separable component $K$ and a core component $C$. Such a decomposition also exists for rank-$r$ $\Sigma$ if $p_1/p_2+p_2/p_1<r$, with $C$ sharing the same rank. If this core $C$ exhibits a partial-isotropy structure, then a partial-isotropy rank-$r$ core is a non-trivial convex combination of a rank-$r$ core and $I_p$ for $p:=p_1p_2$, where the weight on $I_p$ measures the deviation of $\Sigma$ from separability. This motivates studying the geometry of the space of rank-$r$ cores, $\calC_{p_1,p_2,r}^+$. We show that $\calC_{p_1,p_2,r}^+$ is a smooth manifold, except for a measure-zero subset associated with canonical decomposability. When $r=p$, $\calC_{p_1,p_2}^{++}:=\calC_{p_1,p_2,p}^+$ is itself a smooth manifold. The geometric properties, including smoothness of the positive definite cone via separability and the Riemannian gradient and Hessian operator relevant to $\calC_{p_1,p_2,r}^+$, are also derived. As an application, we propose a partial-isotropy core shrinkage estimator for matrix-variate data.  
\end{abstract}

\begin{keywords}
canonical decomposability \sep core covariance matrix \sep fixed-rank \sep 
Kronecker-core decomposition \sep Riemannian manifold optimization \sep
separable covariance matrix
\end{keywords}

\maketitle

\section{Introduction}\label{sec1}

Matrix-variate data arise in a range of modern applications, e.g., microarray data \cite{allen2012}, phonetic data \cite{pigoli2014}, and audio data \cite{warden2018}. A fundamental task for making statistical inference with such data is to estimate the population covariance matrix. For matrix-variate data, a commonly used covariance model is the separable (Kronecker) covariance model \cite{dawid1981}. Namely, for a zero-mean $p_1\times p_2$ random matrix $Y$, its $p_1p_2\times p_1p_2$ covariance matrix $\Sigma$ is formulated as 
\begin{align}\label{sec1.eq1}
    \Sigma=V[Y]\equiv V[\text{vec}(Y)]=\Sigma_2\otimes \Sigma_1,
\end{align}
where $\Sigma_1\in\calS_{p_1}^{++}$ and $\Sigma_2\in\calS_{p_2}^{++}$ correspond to row and column covariance matrices, respectively. Here $\otimes$ denotes the Kronecker product and $\calS_q^{++}$ is the set of $q\times q$ (strictly) positive definite matrices. Note that we assume $p_1,p_2\geq 2$ to emphasize the matrix structure of the data in this article, and let $p=p_1p_2$. It follows that 
\begin{align*}
    \bbE[YY^\top]=\tr(\Sigma_2)\Sigma_1,\quad \bbE[Y^\top Y]=\tr(\Sigma_1)\Sigma_2,
\end{align*}
enabling separate inference of correlation structures of row and column variables \cite{dawid1981,wang2022}. This model is commonly used due to its parsimony and interpretability, involving at most $O(p_1^2+p_2^2)$ correlations between variables. We denote the space of such separable covariance matrices by $\calS_{p_1,p_2}^{++}$.

Due to its interesting geometry and statistical interpretability, several works have studied covariance estimation over the space $\calS_{p_1,p_2}^{++}$ from both the applied math and statistical communities. Suppose $Y_1,\ldots,Y_n\overset{i.i.d.}{\sim} N_{p_1\times p_2}(0,\Sigma_2\otimes \Sigma_1)$. From applied math perspective, $\calS_{p_1,p_2}^{++}$ is a totally geodesic submanifold of $\calS_p^{++}$ under affine-invariant metric, and the negative log-likelihood is geodesically convex on $\calS_{p_1,p_2}^{++}$ under this metric \cite{wiesel2012}, which facilitates constrained estimation. The estimation over $\calS_{p_1,p_2}^{++}$ also enjoys group-invariant properties. Specifically, the action of $G:=GL_{p_1}\times GL_{p_2}$ on $\real^{p_1\times p_2}$ by $(Y,(A,B))\in \real^{p_1\times p_2}\times G \rightarrow AYB^\top\in \real^{p_1\times p_2}$ induces the action of the group of Kronecker non-singular matrices $GL_{p_1,p_2}$ on the parameter space $\calS_{p_1,p_2}^{++}$ as $(\Sigma_2\otimes \Sigma_1,B\otimes A)\in \calS_{p_1,p_2}^{++}\times GL_{p_1,p_2}\rightarrow (B\Sigma_2 B^\top)\otimes (A\Sigma_1 A^\top)\in  \calS_{p_1,p_2}^{++}$. Using the group-invariant theory, the threshold on the sample size $n$, for which the MLE of $\Sigma_2\otimes \Sigma_1$ on $\calS_{p_1,p_2}^{++}$ exists, has been analyzed \cite{derksen2021,amendola2021}. On the other hand, statisticians often make use of the maximum likelihood estimation \cite{dutilleul999} and may be more interested in structured estimation with assumptions on each factor $\Sigma_1$ and $\Sigma_2$, to improve the interpretation of correlation structures for row and column variables, respectively. These include sparsity \cite{leng2018,tsiligkaridis2013}, and banding \cite{zhang2023}. At the intersection of these two communities is the information geometry as studied by \cite{mccormack2025}. 

However, from a statistical perspective, the separability assumption on $\Sigma$ may oversimplify its correlation structure as $p$ grows, allowing at most $O(p_1^2+p_2^2)=o(p^2)$ correlations, whereas $O(p^2)$ correlations for the unstructured $\Sigma$. Hence, the separability assumption is often inappropriate, as also pointed out by \cite{gneiting2002,gneiting2007}. To relax this assumption, \cite{hoff2023} introduced the core covariance matrix. They showed that every $\Sigma\in\calS_p^{++}$ admits a unique decomposition into a separable component $K$, representing the most separable part of $\Sigma$, and a core component $C$, which is a whitened version of $\Sigma$ via the identifiable square root $K^{1/2}$ of $K$, e.g., symmetric square root and Cholesky factor. This decomposition, referred to as a Kronecker-core decomposition (KCD), represents $\Sigma$ as $K^{1/2}CK^{1/2,\top}$, i.e., $C=K^{-1/2}\Sigma K^{-1/2,\top}$. In particular, $\Sigma\in\calS_{p_1,p_2}^{++}$ if and only if $C=I_p$. Such a decomposition may also exist for rank$-r$ $\Sigma$ if $p_1/p_2+p_2/p_1<r$ \cite{drton2021,derksen2021,soloveychik2016}, with $C$ sharing the same rank as $\Sigma$ because $K^{1/2}$ is non-singular. By Proposition $5$ of \cite{hoff2023}, the dimension of the space of full-rank $C$ is $O(p^2)$, whereas that of $\calS_{p_1,p_2}^{++}$ is $O(p_1^2+p_2^2)=o(p^2)$. Thus, in a high-dimensional regime where the sample size $n$ is less than $p$, estimating $\Sigma$ is numerically and statistically unstable without any structural assumption on $C$.    

As discussed by \cite{hoff2023}, one remedy is to introduce a partial-isotropy rank$-r$ structure to $C$, which commonly arises in factor analysis \cite{basilevsky1994,bartholomew2011}. Specifically, $\Omega\in\calS_p^{++}$ has such a structure if $\lambda_1(\Omega)\geq \cdots \geq \lambda_r(\Omega)>\lambda_{r+1}(\Omega)=\cdots=\lambda_p(\Omega)>0$, which is equivalent to $\Omega=AA^\top+c I_p$ for some $A\in\real^{p\times r}$ of full-column rank and constant $c>0$. Nevertheless, \cite{hoff2023} left covariance estimation with such a partial-isotropy core as an open question, as characterizing such a core is crucial. In this article, we show that if $C$ exhibits a partial-isotropy rank$-r$ structure for fixed $r>p_1/p_2+p_2/p_1$, $C$ is a non-trivial convex combination of a rank$-r$ core and a trivial core $I_p$. This leads to the covariance model
\begin{align}\label{sec1.eq2}
    Y_1,\ldots,Y_n\overset{i.i.d.}{\sim} N_{p_1\times p_2}(0,\Sigma) \text{ for } \Sigma=K^{1/2}((1-\lambda)AA^\top+\lambda I_p)K^{1/2,\top},
\end{align}
where $\lambda\in(0,1)$, the identifiable square root $K^{1/2}$ of $K$, and $A\in\real^{p\times r}$ of full-column rank such that $AA^\top$ is a core. We refer to this model as a partial-isotropy core covariance model. As shown in Section \ref{sec2.1} and \ref{sec6}, the lambda $\lambda$ on $I_p$ quantifies how far $\Sigma$ is from being separable. 

To incorporate a partial-isotropy $C$ into the estimation of $\Sigma$ as in (\ref{sec1.eq2}), we need a proper understanding of the space of rank$-r$ cores, denoted $\calC_{p_1,p_2,r}^+$, motivating the study of its geometry. Therefore, this article is devoted to establishing the geometry of $\calC_{p_1,p_2,r}^+$ and thus constructing a shrinkage estimator based on this geometry. We shall emphasize that the geometry of $\calC_{p_1,p_2,r}^+$ itself is interesting, apart from the statistical motivation. Namely, the smoothness of this set is associated with connectivity between row and column variables in terms of the undirected bipartite graph and induces the smooth structure of the ambient space $\calS_{p}^{++}$ via separability. Moreover, the smoothness facilitates constrained estimation, exploiting the curvature of the negative log-likelihood to find the optimal $C$.

Our contributions are summarized in three main strands. First, we show that $\calC_{p_1,p_2,r}^+$ is a compact, smooth, embedded submanifold of $\calS_{p,r}^+$, the set of rank$-r$ PSD matrices. A key insight for the proof is that if $C=AA^\top \in \calC_{p_1,p_2,r}^+$ for $A=[\text{vec}(A_i),\ldots,\text{vec}(A_r)]$ with $A_i\in\real^{p_1\times p_2}$, Proposition $3$ of \cite{hoff2023} implies that 
\begin{align*}
    \sum_{i=1}^r A_iA_i^\top=p_2I_{p_1},\quad \sum_{i=1}^r A_i^\top A_i=p_1I_{p_2}. 
\end{align*}
Given $p_1/p_2+p_2/p_1<r$, we construct $\calC_{p_1,p_2,r}^+$ as the smooth image of the smooth manifold $\calD_{p_1,p_2,r}$ consisting of $\tilde{A}=(A_1,\ldots,A_r)$ satisfying the above. While the proof is straightforward when $r=p$, the rank-deficient case requires additional technical work. Namely, the canonically decomposable $\tilde{A}$, i.e., there exist non-singular matrices $(P,Q)$ such that $PA_iQ^{-1}$ is of non-trivial block-diagonal form, prevents $\calD_{p_1,p_2,r}$ from being a smooth manifold. The canonical decomposability notion arises in the study of the threshold on $r$ for which a generic $\Omega\in\calS_{p,r}^+$ admits the Kronecker MLE \cite{derksen2021,soloveychik2016} and hence the KCD. Essentially, the canonically indecomposable $\tilde{A}$ guarantees that the row variables and column variables are well-connected, in turn making the set $\calC_{p_1,p_2,r}^+$ smooth. 

The second contribution are results regarding differential geometry of $\calC_{p_1,p_2}^{++}$, $\calC_{p_1,p_2,r}$, and the quotient manifold $\calC_{p_1,p_2,r}/\calO_r$. Let $k(\Sigma)$ and $c(\Sigma)$ denote the separable and core components of $\Sigma$, respectively. With the map $f:\Sigma \in \calS_p^{++}\rightarrow (k(\Sigma),c(\Sigma))\in \calS_{p_1,p_2}^{++}\times \calC_{p_1,p_2}^{++}$, we show that $\calS_p^{++}$ is diffeomorphic to $\calS_{p_1,p_2}^{++}\times \calC_{p_1,p_2}^{++}$ via the map $f$ in Section \ref{sec5.1}. Therefore, we provide a new insight into the smooth structure of $\calS_p^{++}$ via separability. We also compute the differentials of $f$ and its inverse $f^{-1}$. Under the Euclidean metric, we derive the Riemannian gradient and Hessian operator on $\calC_{p_1,p_2}^{++}$ in Section \ref{sec5.2}. The same is done for $\calC_{p_1,p_2,r}$ under the same metric, which we employ in manifold optimization, and for the quotient manifold $\calC_{p_1,p_2,r}/\calO_r$ under the induced quotient metric in Section \ref{sec5.3}.

Finally, using the geometry of $\calC_{p_1,p_2,r}$, we propose a partial-isotropy core shrinkage estimator ($\PICSE$) in Section \ref{sec6}, assuming the covariance model in (\ref{sec1.eq2}) for the data. This answers the open question posed by \cite{hoff2023} on how a partial-isotropy core can be incorporated into estimating $\Sigma$. We provide an alternating minimization procedure of the negative log-likelihood in the parameters $(K^{1/2},A,\lambda)$ given in (\ref{sec1.eq2}) to compute $\PICSE$. In updating $A$, we leverage the curvature of the objective function on $\calC_{p_1,p_2,r}$ via second-order Riemannian manifold optimization using the results in Section \ref{sec5.3}, with some suitable retraction. In Section \ref{sec7}, we numerically illustrate that $\PICSE$ outperforms existing covariance estimators for matrix-variate data, and an ad hoc estimator.

\subsection{Organization}\label{sec1.1}

The rest of the article is organized as follows. Section \ref{sec1.2} introduces notations used throughout this article. In Section \ref{sec2}, we review some preliminaries, including the KCD, Riemannian manifolds, quotient manifolds, and algebraic geometry. In Section \ref{sec3}, we prove that $\calC_{p_1,p_2}^{++}$ is a compact, smooth, embedded submanifold of $\calS_p^{++}$. When $p_1/p_2+p_2/p_1<r<p$, it is shown that $\calC_{p_1,p_2,r}^+$ is a compact, smooth, embedded submanifold of $\calS_{p,r}^{+}$ in Section \ref{sec4} after removing the set of canonically decomposable matrices, using the proof strategy developed in Section \ref{sec3}. In Section \ref{sec5}, we establish the diffeomorphic relationship between $\calS_p^{++}$ and $\calS_{p_1,p_2}^{++}\times \calC_{p_1,p_2}^{++}$. We also derive the differential geometric quantities relevant to $\calC_{p_1,p_2}^{++}$, $\calC_{p_1,p_2,r}$, and $\calC_{p_1,p_2,r}/\calO_r$ under the Euclidean metric. Leveraging the geometry of $\calC_{p_1,p_2,r}$, the partial isotropy core shrinkage estimator ($\PICSE$) is proposed in Section \ref{sec6}, supported by numerical illustrations in Section \ref{sec7}. Section \ref{sec8} concludes the article with a discussion. All the omitted proofs are deferred to Appendix \ref{append.A}. The formulas of Euclidean derivative and Hessian operator associated with computing $\PICSE$ are provided in Appendix \ref{append.B}. 

\subsection{Notations}\label{sec1.2}
In this section, we collect the notations used in this article as follows:
\begin{itemize}
\item $\calS_p:=\set{\Sigma\in \real^{p\times p}:\Sigma=\Sigma^\top}$.
    \item $\calS_{p}^+:=\set{\Sigma\in\calS_p:\Sigma\succeq \bfzero_{p\times p} }$.
      \item $\calS_{p}^{++}:=\set{\Sigma\in\calS_{p}^+:\Sigma\succ \bfzero_{p\times p} }$
      \item $\bar{\calS}_p^{++}:=\set{\Sigma\in\calS_p^{++}:\tr\parentheses{\Sigma}=1}$ and $\bbP(\calS_{p}^{++}):=\set{\Sigma\in\calS_p^{++}:|\Sigma|=1}$.
    \item $\calS_{p,r}^+:=\set{\Sigma\in \calS_p^+:\text{rank}(\Sigma)=r}$. Note that $\calS_{p}^{++}\equiv \calS_{p,p}^+$.
    
    \item $\calS_{p_1,p_2}^{++}:=\set{\Sigma_2\otimes \Sigma_1:\Sigma_1\in \calS_{p_1}^{++},\Sigma_2\in\calS_{p_2}^{++}}$ for the Kronecker product $\otimes$.
    \item $\calC_{p_1,p_2}^{++}:=\set{C\in\calS_p^{++}: k(C)=I_p}$.
    \item $\calL_p:=\set{L\in\real^{p\times p}:L_{ij}=0\text{ for }i>j}$.
    \item $\calL_p^{++}:=\set{L\in\calL_p: L_{ii}>0}$ and $\bbP(\calL_p^{++})=\set{L\in\calL_p^{++}:|L|=1}$.
    \item For given $\Sigma\in\calS_p^{++}$, $\calL(\Sigma)\in\calL_p^{++}$ denotes its unique Cholesky factor.
    \item $\calL_{p_1,p_2}^{++}:=\set{L_2\otimes L_1:L_1\in\calL_{p_1}^{++},\; L_2\in\calL_{p_2}^{++}}$.
    \item $\calO_p:=\set{O\in\real^{p\times p}: OO^\top=O^\top O=I_p}$.
        \item $\calO_{p,q}:=\set{O_2\otimes O_1\in\real^{pq\times pq}: O_1\in \calO_p,O_2\in\calO_q}$.
    \item $K_{m,n}:$ a $mn\times mn$ commutation matrix such that $K_{m,n}\text{vec}(B^\top)=\text{vec}(B)$ for $B\in\real^{m \times n}$. 
    \item $GL_p$ : a general linear group of order $p$.
        \item $GL_{p_1,p_2}:=\set{B\otimes A:A\in GL_{p_1},B\in GL_{p_2}}.$
    \item For given a matrix $M,$ $C(M)$ and $N(M)$ denote the column and (right) null space of $M$, respectively.
    \item $\real_*^{p\times q}:=\set{X\in \real^{p\times q}:\text{rank}(X)=\min\set{p,q}}$.
    \item A map $\varphi_{p_1,p_2,r}:(\real^{p_1\times p_2})^r\rightarrow \real^{p_1p_2\times r}$ is defined by
    \begin{align*}
        \varphi_{p_1,p_2,r}(A):=\varphi_{p_1,p_2,r}(A_1,\ldots,A_r)=[\text{vec}(A_1),\ldots,\text{vec}(A_r)].
    \end{align*}
    Note that the map $\varphi_{p_1,p_2,r}$ is clearly a diffeomorphism. 
    \item $(\real^{p\times q} )_*^r:=\set{A=(A_1,\ldots,A_r)\in(\real^{p\times q})^r:\varphi_{p,q,r}(A)\in \real_*^{pq\times r}}$.
    \item For $u\in \real^{pq}$, $\textsf{mat}_{p\times q}(u)$ denotes the $p\times q $ matrix by reshaping $u$.
    \item $\textsf{Skew}_p:=\set{M\in\real^{p\times p}:M=-M^\top}$.
    \item For $M\in\real^{p\times p}$, $\sym(M):=(M+M^\top)/2$ and $\skewed(M):=(M-M^\top)/2$
    \item For $M\in\calS_p$, $\bbD(M):=\text{diag}(M)$ and $\floor{M}$ denotes strictly lower triangular part of $M$. Also, $(M)_{\frac{1}{2}}:=\floor{M}+\bbD(M)/2$ and $\bbL(M):=\floor{M}+\bbD(M)$.
    \item For $B_1,\ldots,B_R\in \real^{m\times n}$, a block-diagonal sum of $B_1,\ldots,B_R$ is given by $\bigoplus_{i=1}^R B_i:=\text{diag}(B_1,\ldots,B_R)$. 
    \item $\real_+:=\set{a\in\real:a>0}$.
    \item $\bfzero_m$ and $\bfzero_{m\times n}$ denote the $m-$dimensional zero vector and the $m\times n$ zero matrix, respectively. 
 \item For $M\in\real^{m\times n}$, $\sigma_i(M)$ denotes the $i$th largest signular value of $M$ and $||M||_2:=\sigma_1(M)$. Also, if $M\in\calS_p$, $\lambda_i(M)$ denotes the $i$th largest eigenvalue of $M$.
    \item For given $n\in\mathbb{N}$, $[n]:=\set{1,\ldots,n}$.
\end{itemize}
We shall refer to $\calS_{p_1,p_2}^{++}$ as the Kronecker covariance manifold, and to $\calC_{p_1,p_2}^{++}$ as the core covariance manifold. Note that the map $k$ associated with $\calC_{p_1,p_2}^{++}$ defined above is referred to as a Kronecker map, which will be formally defined in Section \ref{sec2.1}. We also define a block partition of a symmetric matrix, and introduce the partial trace operators. Suppose $M\in\calS_p$ is partitioned as 
\begin{align*}
M=\left[\begin{array}{cccc}
  M_{[1,1]} & M_{[1,2]}  & \cdots & M_{[1,p_2]} \\
  M_{[2,1]}   & M_{[2,2]} & \cdots & M_{[2,p_2]} \\
  \vdots   & \vdots & \ddots &\vdots  \\
  M_{[p_2,1]} & M_{[p_2,2]} & \cdots & M_{[p_2,p_2]} 
\end{array}\right],
\end{align*}
where each block $M_{[i,j]}\in\real^{p_1\times p_1}$ and $p=p_1p_2$. Let $(M_{[i,j]})$ be a block partition of $M$ as above. Also, the partial trace operators $\tr_1$ and $\tr_2$ are defined by 
\begin{align*}
    \tr_1&:M\in\calS_p\rightarrow \sum_{i=1}^{p_2}M_{[i,i]}\in\calS_{p_1},\\
    \tr_2&:M\in\calS_p\rightarrow N=(n_{ij})\in\calS_{p_2},\; \text{where } n_{ij}=\tr\parentheses{M_{[i,j]}}. 
\end{align*}
In the sequels of this article, $p:=p_1p_2$. Also, $\Omega^{1/2}$ denotes a square root of $\Omega\in\calS_p^{++}$, either a symmetric square root or Cholesky factor. We will specify the choice of the square root when necessary. Otherwise, $\Omega^{1/2}$ is either one of these square roots.    

\section{Preliminaries}\label{sec2}

\subsection{Kronecker-core decomposition}\label{sec2.1}
In this section, we review the Kronecker-core decomposition proposed by \cite{hoff2023}. Suppose $\Sigma\in\calS_p^{++}$ and define a function $d:\calS_{p_1,p_2}^{++}\rightarrow \real$ by 
\begin{align}\label{sec2.1.eq1}
    d(K|\Sigma):=d(K_2\otimes K_1|\Sigma)=\tr\parentheses{\Sigma K^{-1}}+p_1\log|K_2|+p_2\log|K_1|,
\end{align}
which is equivalent to the Kullback-Leibler (KL) divergence between $N_{p_1\times p_2}(0,K_2\otimes K_1)$ and $N_{p_1\times p_2}(0,\Sigma)$. The separable (Kronecker) component of $\Sigma$, $k(\Sigma)$, is then defined to be a unique minimizer of $d$ in $K\in\calS_{p_1,p_2}^{++}$. That is,
\begin{align*}
    k(\Sigma):=\argmin_{K=K_2\otimes K_1\in\calS_{p_1,p_2}^{++}}d(K|\Sigma).
\end{align*}
Thus, $k(\Sigma)$ is the Kronecker maximum likelihood estimate (MLE) of $\Sigma$, representing the most separable component of $\Sigma$. Note that $k(\Sigma)$ uniquely exists for any $\Sigma\in\calS_p^{++}$ \cite{srivastava2008,hoff2023} and we refer the map $k:\calS_{p}^{++}$ to as the Kronecker map. 

To define the core component, let $h$ be a bijective square root map defined on $\calS_{p_1,p_2}^{++}$, e.g., a symmetric square root (i.e., $h(\calS_{p_1,p_2}^{++})\equiv \calS_{p_1,p_2}^{++}$) or a Cholesky factor (i.e., $h(\calS_{p_1,p_2}^{++})\equiv \calL_{p_1,p_2}^{++}$). By a slight abuse of notation, we write $h\in \calS_{p_1,p_2}^{++}$ (resp. $\calL_{p_1,p_2}^{++})$ when $h$ is taken to be the symmetric square root (resp. Cholesky factor). With a fixed choice of $h$, the core component of $\Sigma$ is defined to be $c(\Sigma)\equiv h(k(\Sigma))^{-1}\Sigma h(k(\Sigma))^{-\top}$. By the definition of the Kronecker map, it holds that $k(G\Sigma G^{\top})=Gk(\Sigma)G^{\top}$ for any $G\in GL_{p_1,p_2}$ (\cite{hoff2023}, Proposition $2$). Namely, the map $k$ is equivariant with respect to the action of $GL_{p_1,p_2}$. Thus, $k(c(\Sigma))=I_p$; that is, the Kronecker MLE of any core is $I_p$. This leads to the definition of the core covariance matrix as a positive definite matrix whose Kronecker MLE equals $I_p$. That is, the set of such covariance matrices is equivalent to $\calC_{p_1,p_2}^{++}\equiv\set{C\in\calS_{p}^{++}:k(C)=I_p}$. By the uniqueness of the Kronecker MLE and the bijectivity of the map $h$, the core component is uniquely defined for any $\Sigma$. Consequently, the map $c:\Sigma\in\calS_p^{++}\rightarrow h(k(\Sigma))^{-1}\Sigma h(k(\Sigma))^{-\top} \in \calC_{p_1,p_2}^{++}$ is well-defined and referred to as the core map. By the definition of the maps $k$ and $c$, every $\Sigma$ admits a unique and identifiable Kronecker-core decomposition (KCD) as $h(k(\Sigma))c(\Sigma) h(k(\Sigma))^\top$ (see Proposition $5$ of \cite{hoff2023}). The definitions of the separable and core components are summarized below.  
\begin{definition}\label{sec2.1.def1}
The Kronecker map $k:\calS_p^{++}\rightarrow \calS_{p_1,p_2}^{++}$ sends $\Sigma$ to the unique minimizer $k(\Sigma)$ of $d(\cdot|\Sigma)$ defined in (\ref{sec2.1.eq1}). For a fixed choice of the square root map $h\in\calS_{p_1,p_2}^{++}$ or $h\in\calL_{p_1,p_2}^{++}$, the map $c:\Sigma\in\calS_p^{++}\rightarrow h(k(\Sigma))^{-1}\Sigma h(k(\Sigma))^{-\top} \in\calC_{p_1,p_2}^{++}$ defines the core map. Here, $k(\Sigma)$ and $c(\Sigma)$ are referred to as the separable and core components of $\Sigma$, respectively. Also, $h(k(\Sigma))c(\Sigma)h(k(\Sigma))^\top$ is a KCD of $\Sigma$.   
\end{definition}
 
By the construction, $c(\Sigma)=I_p$ if and only if $\Sigma=k(\Sigma) \in\calS_{p_1,p_2}^{++}$. Namely, the only separable core is $I_p$. As discussed in Section \ref{sec1}, the Kronecker MLE $K$ may also uniquely exist for $\Omega\in\calS_{p,r}^+$ if $r>p_1/p_2+p_2/p_1$ \cite{drton2021,derksen2021,soloveychik2016} by taking $\Sigma=\Omega$ in (\ref{sec2.1.eq1}). Provided that $K$ uniquely exists, its core component $C$ can be also uniquely defined by whitening $\Omega$ via $K^{1/2}$ as above. Since $K^{1/2}$ is non-singular, $\Omega$ and $C$ shares the same rank as $r$. Suppose $C=AA^\top$ for $A=[\text{vec}(A_1),\ldots,\text{vec}(A_r)]\in \real_*^{p\times r}$ with $A_i\in\real^{p_1\times p_2}$. By Proposition $3$ of \cite{hoff2023}, $\tilde{A}:=(A_1,\ldots,A_r)$ should satisfy that 
\begin{align}\label{sec2.1.eq2}
    \tr_1(C)\equiv \sum_{i=1}^r A_iA_i^\top=p_2 I_{p_1},\quad \tr_2(C)\equiv  \sum_{i=1}^r A_i^\top A_i=p_1 I_{p_2}.  
\end{align}
This motivates the set of the rank$-r$ core covariance matrices defined as 
\begin{align*}
\tilde{\calC}_{p_1,p_2,r}^+\equiv\set{C\in\calS_{p,r}^+:\tr_1(C)=p_2I_{p_1},\tr_2(C)=p_1I_{p_2}}. 
\end{align*}
 Note that if $r=p$, $\calC_{p_1,p_2}^{++}\equiv\tilde{\calC}_{p_1,p_2,r}^+ $.

We shall connect rank$-r$ cores to statistical applications, thereby motivating the study of rank$-r$ cores. Observe that the set $\calC_{p_1,p_2}^{++}$ is convex by (\ref{sec2.1.eq2}). Using this convexity, \cite{hoff2023} proposed a core shrinkage estimator ($\CSE$) that shrinks the sample core toward the unique separable core, $I_p$. However, if the population core exhibits a low-dimensional feature, the $\CSE$ can be subject to over-parameterization when $n<p$. Specifically, \cite{hoff2023} discussed a partial-isotropy structure as a possible structural assumption on $c(\Sigma)$, following the approach in factor analysis. Namely, $c(\Sigma)$ is represented as $BB^\top+\lambda I_p$ for some $B\in\real_*^{p\times r}$ and $\lambda>0$. By the construction of $\CSE$, its partial-isotropy rank is $n$ when $n<p$ (see Section $3.1$--$3.2$ of \cite{hoff2023}), which is typically larger than $r$. Thus, it may over-parameterize such a $c(\Sigma)$. Nevertheless, they did not pursue incorporating the partial-isotropy structure of $c(\Sigma)$ in estimation themselves due to a lack of understanding of such a core. By the linear system in (\ref{sec2.1.eq2}), there are constraints on $B$ and $\lambda$, compared to a usual partial-isotropy covariance. The following implies that a partial-isotropy rank$-r$ core is a non-trivial convex combination of a rank$-r$ core and a trivial core $I_p$, leading to the study of rank$-r$ cores. 

\begin{prop}\label{sec2.1.prop1}
For $C\in \calC_{p_1,p_2}^{++}$, suppose $C=BB^\top+\lambda I_p$ for some $B\in \real_*^{p\times r}$ and constant $\lambda>0$, where $p_1/p_2+p_2/p_1<r$. Then $\lambda\in(0,1)$ and $BB^\top=(1-\lambda)AA^\top$ for a rank$-r$ core $AA^\top$ with $A\in\real_{*}^{p\times r}$.
\end{prop}
\begin{proof}
By the linear system in (\ref{sec2.1.eq2}), $C$ should satisfy 
\begin{align*}
    \tr_1\parentheses{C}&=\tr_1\parentheses{BB^\top}+\lambda\tr_1(I_p)=p_2 I_{p_1} \Rightarrow\tr_1\parentheses{BB^\top}=p_2(1-\lambda)I_{p_1},\\
        \tr_2\parentheses{C}&=\tr_2\parentheses{BB^\top}+\lambda\tr_2(I_p)=p_1 I_{p_2} \Rightarrow\tr_2\parentheses{BB^\top}=p_1(1-\lambda)I_{p_2}.
\end{align*}
Since $BB^\top$ is positive semi-definite, so are its partial traces \cite{zhang2012,zhang2011}. Hence, we should have that $\lambda\leq 1$. Note that the linear system in (\ref{sec2.1.eq2}) implies that $\tr(C)=p$. Therefore, if $\lambda=1$, $\tr(BB^\top)=0$ so that $BB^\top=\bfzero_{p\times p}$, contradicting the assumption that $B\in \real_*^{p\times r}$. Thus, $\lambda<1$. Parameterizing $BB^\top$ by $(1-\lambda)AA^\top$ for some $A\in\real_*^{p\times r}$, we have that 
\begin{align*}
    \tr_1(AA^\top)=p_2I_{p_1}, \quad \tr_2(AA^\top)=p_1I_{p_2},
\end{align*}
implying that $AA^\top$ is a rank$-r$ core. 
\end{proof}
\begin{remark}
Regarding the condition on $r$ in Proposition \ref{sec2.1.prop1}, recall that it arises from the sample size threshold for which the Kronecker MLE exists. In fact, by Theorem $1.2$ of \cite{derksen2021}, the other scenarios on $(p_1,p_2,r)$ that admit the Kronecker MLE are either $p_1^2+p_2^2-rp_1p_2=0$, which is equivalent to $(p_1,p_2,r)=(p_1,p_1,2)$, or $p_1^2+p_2^2-rp_1p_2=d^2$ for $d=\text{gcd}(p_1,p_2)$. Assuming $p_1\geq p_2$ without loss of generality, the latter can be satisfied with $(p_1,p_2,r)=(p_2r,p_2,r)$ and $((k+1)m,km,2))$ for some $k,m\in\bbN$, for example (see \cite{sung2025} also). Compared to the regime where $p_1/p_2+p_2/p_1<r$, equivalently $p_1^2+p_2^2-rp_1p_2<0$, the other scenarios are highly restrictive, as generic $(p_1,p_2,r)$ do not satisfy them. On the other hand, the regime where $p_1/p_2+p_2/p_1<r$ applies to generic $(p_1,p_2,r)$ and allows freeness in the choice of $r$.     
\end{remark}

Because every $\Sigma\in\calS_{p_1,p_2}^{++}$ has a unique KCD, it is natural to question whether the same holds for every $\Omega\in \calS_{p,r}^+$ whenever $r>p_1/p_2+p_2/p_1$. The answer is negative, as illustrated by the example below.
\begin{example}\label{sec2.1.ex1}
Suppose $E=(E_{11},E_{12},E_{22})\in(\real^{2\times2})_*^3$ where each $E_{ij}$ has a $1$ in the $(i,j)-$th entry and $0$ elsewhere. With $F=\varphi_{2,2,3}(E)$, $FF^\top\in\calS_{4,3}^+$. However, $FF^\top$ does not admit a Kronecker MLE. The proof is deferred to Appendix \ref{append.A.1}.
\end{example}

The reason is that the threshold on $r$ for which the Kronecker MLE exists is understood in a generic (almost sure) sense \cite{derksen2021}. In a strict algebra sense, however, the Kronecker MLE may not exist for rank$-r$ $\Omega$ even if $r$ satisfies the threshold as seen above. Furthermore, unless $r=p$, $\tilde{\calC}_{p_1,p_2,r}^+$ may not be a smooth manifold. In Section \ref{sec4.1}, we show that the singularity preventing $\tilde{\calC}_{p_1,p_2,r}^+$ from being a manifold in view of Sard's theorem (\cite{lee2012}, Theorem $6.10$) corresponds to the set of canonically decomposable matrices. Here, $(A_1,\ldots,A_r)\in(\real^{p_1\times p_2})^r$ is canonically decomposable if there exists  $(P,Q)\in GL_{p_1}\times GL_{p_2}$ such that $PA_iQ^{-1}$ is of a non-trivial block-diagonal form for each $i\in[r]$. After removing this set from $\tilde{\calC}_{p_1,p_2,r}^+$, the remaining set $\calC_{p_1,p_2,r}^+$ is a smooth manifold, as shown in Section \ref{sec4.2}.  

\subsection{Riemannian manifolds}\label{sec2.2}
In this section, we briefly review some geometric properties of Riemannian manifolds. For details, we refer the reader to \cite{lee2012,lee2018,absil2008,boumal2023}. Suppose $(\calM,g)$ is a Riemannian manifold, where $\calM$ is a smooth manifold equipped with a Riemannian metric $g$. The Riemannian metric $g:T_x\calM\times T_x\calM\rightarrow \real$ defines an inner product on each tangent space $T_x\calM$, varying smoothly with $x\in\calM$. A smooth curve  $\gamma_x^v:[0,1]\rightarrow \calM$ emanating from $x\in \calM$ in the direction of $v\in T_x\calM$ is geodesic, i.e., a locally shortest curve with zero acceleration. Then the exponential map $\Exp_x:T_x\calM\rightarrow \calM$ is defined by $\Exp_x(v):=\gamma_x^v(1)$. Suppose $f$ is a smooth function on $\calM$. The Riemannian gradient $\grad f(x)$ of $f$ at $x\in\calM$ is the unique tangent vector in $T_x\calM$ satisfying that for any $v\in T_x\calM$, 
\begin{align*}
    g_x(\grad f(x),v)=D_vf(x)
\end{align*}
 where $D_vf(x)$ is a directional derivative of $f(x)$ along $v$. The Riemannian Hessian operator of $f$, denoted $\Hess f(x):T_x\calM\rightarrow T_x\calM$, is then defined to be a covariant derivative of the Riemannian gradient. By (5.35)--(5.36) of \cite{absil2008}, for a geodesic $\gamma_x^v$, 
 \begin{align*}
\frac{d^2}{dt^2}(f\circ \gamma_x^v)(t)\bigg|_{t=0}\equiv \Hess f(x)[v,v]=g_x(\Hess f(x)[v],v).
 \end{align*}
Since $\Hess f(x)[\cdot,\cdot]$ is a symmetric bilinear form on $T_x\calM$ (see (5.31) of \cite{absil2008}), the polarization identity implies that 
\begin{align}\label{sec2.2.eq1}
    g_x(\Hess f(x)[v],w)=\frac{ \Hess f(x)[v+w,v+w]- \Hess f(x)[v,v]- \Hess f(x)[w,w]}{2},
\end{align}
where the linear operator $\Hess f(x)[v]$ is identified as the unique tangent vector satisfying the above for any $w\in T_x\calM$. The smooth function $f$ on $\calM$ is {\it (strictly) geodesically convex} if the function $h=f\circ \gamma_x^v$ is (strictly) convex in usual sense for any geodesic $\gamma_x^v$ of non-zero speed. In Section \ref{sec2.2.1}--\ref{sec2.2.2}, we review the Riemannian geometry of $\calS_p^{++}$ and $\bbP(\calS_p^{++})$ (resp. $\calL_p^{++}$ and $\bbP(\calL_p^{++})$) under affine-invariant metric (resp. Cholesky metric), which are useful for Riemannian optimization in Section \ref{sec6}. 

\subsubsection{Riemannian geometry of $\calS_p^{++}$ and $\bbP(\calS_p^{++})$}\label{sec2.2.1}
We review the Riemannian geometry of $\calS_p^{++}$ and $\bbP(\calS_{p}^{++})$ under the affine-invariant metric $g^{\AI}$ \cite{skovgaard1984,pennec2006,moakher2005}. The tangent spaces of each manifold are given by
\begin{align*}
    &T_\Sigma \calS_p^{++}\equiv \calS_p, \quad T_\Sigma \bbP(\calS_p^{++})=\set{V\in\calS_p:\tr\parentheses{\Sigma^{-1}V}=0}.
\end{align*}
Thus, the dimensions of $\calS_p^{++}$ and $\bbP(\calS_p^{++})$ are $\binom{p+1}{2}$ and $\binom{p+1}{2}-1$, respectively. The affine-invariant metric $g^\AI$ on $\calS_p^{++}$ is defined as 
\begin{align*}
    g_{\Sigma}^{\AI}(U,V)=\tr\parentheses{\Sigma^{-1}U\Sigma^{-1}V}, \quad U,V\in T_\Sigma \calS_{p}^{++}.
\end{align*}
The geodesic, Riemannian gradient, and Riemannian Hessian operator of $(\calS_p^{++},g^{\AI})$ are given as follows;
\begin{itemize}[leftmargin=*]
    \item[]\textbf{(Geodesic)} Suppose $\Sigma\in\calS_p^{++}$ and $V\in T_\Sigma \calS_p^{++}$. Then the geodesic emanating from $\Sigma$ in the direction of $V$ is $\gamma_\Sigma^V:t\in[0,1]\rightarrow \Sigma^{1/2}\exp\parentheses{t\Sigma^{-1/2}V\Sigma^{-1/2}}\Sigma^{1/2}$ for a symmetric square root $\Sigma^{1/2}$ of $\Sigma$.
    \item[]\textbf{(Riemannian Gradient \& Hessian Operator)} Suppose $f$ is a smooth function over $\calS_p^{++}$. For $\Sigma\in\calS_p^{++}$ and $V\in T_\Sigma\calS_p^{++}$,
    \begin{align*}
        \grad f(\Sigma)&=\Sigma \nabla f(\Sigma)\Sigma,\quad \Hess f(\Sigma)[V]=\Sigma \nabla^2 f(\Sigma)[V]\Sigma+\sym\parentheses{V\nabla f(\Sigma)\Sigma}.
    \end{align*}
\end{itemize}
The smooth manifold $\bbP(\calS_{p}^{++})$ is  a totally geodesic submanifold of $\calS_p^{++}$ under $g^{\AI}$. Also, the orthogonal projection of $V\in T_\Sigma \calS_p^{++}$ onto $T_\Sigma \bbP(\calS_p^{++})$ (see ($32$) of \cite{simonis2025}) is given by 
\begin{align}\label{sec2.2.1.eq1}
    \calP_\Sigma(V):=V-\tr\parentheses{\Sigma^{-1}V}\Sigma/p.
\end{align}
Lastly, the Riemannian gradient and Hessian operator of a smooth function $f$ on  $\bbP(\calS_{p}^{++})$ are obtained as the orthogonal projections of those on $\calS_p^{++}$ as above, by smoothly extending $f$ to $\calS_p^{++}$.

\subsubsection{Riemannian geometry of $\calL_p^{++}$ and $\bbP(\calL_p^{++})$}\label{sec2.2.2}
We review the Riemannian geometry of $\calL_p^{++}$ and $\bbP(\calL_p^{++})$ under Choleksy metric $g^{\chol}$ \cite{lin2019}. The tangent spaces of each manifold are given by 
\begin{align*}
    &T_L \calL_p^{++}\equiv \calL_p, \quad T_L \bbP(\calL_p^{++})=\set{V\in\calL_p:\tr\parentheses{L^{-1}V}=0}.
\end{align*}
Note that the dimensions of $\calL_p^{++}$ and $\bbP(\calL_p^{++})$ are $\binom{p+1}{2}$ and $\binom{p+1}{2}-1$, respectively. Then the Cholesky metric $g^{\chol}$ on $\calL_p^{++}$ is defined as 
\begin{align*}
    g_L^{\chol}(U,V)=g^E(\floor{U},\floor{V})+g^E(\bbD(L)^{-2}\bbD(U),\bbD(V)),\quad U,V\in T_L\calL_p^{++}.
\end{align*}
where $g^E$ is the Euclidean metric. For $L\in \calL_{p}^{++}$ and the tangent vector  $V\in T_L\calL_p^{++}$, the geodesic is given by
\begin{align*}
 \gamma_L^V:t\in[0,1]\rightarrow \floor{L}+t\floor{V}+\bbD(L)\exp\parentheses{t\bbD(X)\bbD(L)^{-1}}\in \calL_p^{++}.  
\end{align*}
As an analogy to Section \ref{sec2.2.1}, $\bbP(\calL_p^{++})$ is a totally geodesic submanifold of $\calL_p^{++}$ under $g^{\chol}$. The formulas of Riemannian gradient and Hessian operator on $(\calL_p^{++},g^\chol)$ are provided below. 
\begin{prop}\label{sec2.2.2.prop1}
    Suppose $f:\calL_p^{++}\rightarrow \real$ is a smooth function. Given $L\in\calL_p^{++}$ and the tangent vector $V\in T_L \calL_p^{++}\equiv \calL_p$, the Riemannian gradient and Hessian operator of $f$ on $(\calL_p^{++},g^\chol)$ are given by
    \begin{align*}
        \grad f(L)&=\bbD(L)^2\bbD(\nabla f(L))+\floor{\nabla f(L)},\\
        \Hess f(L)[V]&=\bbD(L)^2\bbD(\nabla^2 f(L)[V])+\floor{\nabla^2 f(L)[V]}+\bbD(L)\bbD(\nabla f(L))\bbD(V).
    \end{align*}
\end{prop}
\begin{proof}
   See Appendix \ref{append.A.1} for the proof.
\end{proof}

Also, the orthogonal projection of $V\in T_L\calL_p^{++}$ onto $T_L\bbP(\calL_p^{++})$ can be derived as an analogy to (\ref{sec2.2.1.eq1}).

\begin{prop}\label{sec2.2.2.prop2}
Suppose $\calL_p^{++}$ is equipped with metric $g^\chol$. Let $L\in \calL_p^{++}$ and $V\in T_L\calL_p^{++}$. Then the operator $\calP_L:V\in T_L\calL_p^{++}\rightarrow V-\tr\parentheses{L^{-1}V}\bbD(L)/p\in T_L\bbP(\calL_p^{++})$ is an orthogonal projection.
\end{prop}
\begin{proof}
   See Appendix \ref{append.A.1} for the proof.
\end{proof}

It then directly follows that the Riemannian gradient and Hessian operator of a smooth function $f$ on $\bbP(\calL_p^{++})$ are obtained as the orthogonal projections of those on $\calL_p^{++}$ given in Proposition \ref{sec2.2.2.prop1}.

\subsection{Quotient manifold}\label{sec2.3}
In this section, we review the quotient geometry of a Riemannian manifold. We again refer to \cite{lee2012,lee2018} for the details. Suppose $(\calM,g)$ is a Riemannian manifold and $G$ is a Lie group acting smoothly, properly, and freely on $\calM$. The action $(g,x)\in G\times \calM\rightarrow g\cdot x\in\calM$ is smooth and proper if it is smooth and proper as a map. Note that the map $f:X\rightarrow Y$ between two topological spaces is proper if the preimage of every compact subset of $Y$ is also compact in $X$. Also, the action is free if there is no non-trivial action that fixes the elements of $\calM$, i.e., if $g\cdot x=x$, then $g$ is an identity $e$ for any $x\in\calM$. If the Lie group $G$ is also compact, e.g., $\calO_p$, then every smooth action of $G$ is proper (\cite{lee2012}, Corollary $21.6$). For any Lie group $G$ with a smooth, proper, and free action, there exists a unique smooth structure on $\calM^0=\calM/G$ such that the canonical projection $\pi:x\in \calM\rightarrow [x]\in\calM^0$ is a smooth submersion. Also, $\dim\calM^0=\dim\calM-\dim G$. With a submanifold $\calM_x:=\pi^{-1}([x])$ of $\calM$, the vertical space at $x$ is given as
\begin{align*}
    \calV_x\equiv T_x\calM_x=\text{ker\,}d\pi(x).
\end{align*}
The horizontal space $\calH_x$ is an orthogonal complement of $\calV_x$ in $T_x\calM$. For any $v\in T_{[x]}\calM^0$, a unique tangent vector $v_x^\#\in \calH_x$ such that $d\pi(x)[v_x^\#]=v$, referred to as a horizontal lift of $v$ at $x$. The quotient metric $g^0$ is defined as $g_x^0(v,w)=g_x(v_x^\#,w_x^\#)$, making $(\calM^0,g^0)$ a Riemannian manifold.

\subsection{Algebraic geometry}\label{sec2.4}
In this section, we briefly review the algebraic geometry, focusing on the ingredients necessary for proving that the set of canonically decomposable matrices is Zariski-closed and has a Lebesgue measure zero in Section \ref{sec4.1}. We shall refer to \cite{mumford1988,harris1992,hartshorne1977} for a more comprehensive review. For the subset $X\subset \real^n$, we say $X$ is {\it Zariski-closed} if $X$ is a zero locus of finitely many polynomials over the field $\real$. That is, for finitely many polynomials $p_1,\ldots,p_m$, 
\begin{align*}
    X=\set{(x_1,\ldots,x_n)\in\real^n:p_i(x_1,\ldots,x_n)=0,\; ^\forall i\in[m]}.
\end{align*}
Otherwise, $X$ is {\it Zariski-open}. Note that such a set $X$ is also referred to as an affine (resp. projective) algebraic set with the affine space $\real^n$ (resp. the projective space $\real\bbP^{n-1}$). For a projective algebraic set, the polynomials should be homogeneous, i.e., each term has the same degree. To define the {\it (topological) dimension} of any subset $X$ of $\real^n$, suppose $Y$ is a closed subset of $\real^n$. We say $Y$ is reducible if $Y$ is the union of two proper closed subsets $Y_1$ and $Y_2$. Otherwise, $Y$ is irreducible. Then the {\it (topological) dimension} of $X$ is defined to be the largest integer $d\in[n]$ such that there exists a chain $ Y_0\subsetneq Y_1\subsetneq \cdots \subsetneq Y_d\subset \bar{X}$, where each $Y_i$ is an irreducible closed subset of $\bar{X}$, the closure of $X$. Such $d$ always exists, and write $d:=\dim X$. We provide some useful facts about the topological dimension and Zariski-closed set to prove that the set of canonically decomposable matrices is proper Zariski-closed subset with measure zero, whose proofs are omitted (see Lemma $2.2$--$2.3$ and $2.7$ of \cite{domanov2015}).
\begin{lemma}\label{sec2.4.lemma1}
The followings are true:
\begin{itemize}
    \item If $X_1\subset X_2\subset\real^n$, $\dim X_1\leq \dim X_2\leq n$. Also, $\max_i\dim X_i\leq \dim (X_1\times X_2)$.
    \item For $X_1,\ldots,X_k\subset \real^n$, $\dim\parentheses{\cup_{i=1}^k X_i}=\max_i \dim X_i$.
  \item If $X$ is a Zariski-closed subset of $\real^n$, then $X$ is also closed under Euclidean topology. Also, a finite union of Zariski-closed sets is again Zariski-closed. 
  \item If $X$ is a proper subset of $\real^n$ with $\dim X<n$, $X$ has a Lebesgue measure zero. Thus, any proper Zariski closed set is a closed susbset (in Euclidean sense) with a measure zero.
\end{itemize}
\end{lemma}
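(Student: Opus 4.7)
The plan is to treat the four bullet points in order. Items one through three reduce to definitional unpacking plus a few standard irreducibility arguments, while item four requires an inductive measure-theoretic step that is the main technical piece.

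\textbf{Items one and two.} For $\dim X_1 \leq \dim X_2$, any chain of irreducible closed subsets of $\overline{X_1}$ also sits inside $\overline{X_2}$ since $\overline{X_1} \subset \overline{X_2}$. The bound $\dim X \leq n$ follows from $\dim \real^n = n$, which I would justify by exhibiting the coordinate chain $\real^0 \subsetneq \real^1 \subsetneq \cdots \subsetneq \real^n$ and noting that $\real^n$ is Zariski-irreducible (because $\real[x_1,\ldots,x_n]$ is an integral domain, so no two proper Zariski-closed subsets can cover $\real^n$). For the product bound, I would lift a maximal chain in $\overline{X_1}$ to $\overline{X_1}\times\overline{X_2}$ via $Y_j \mapsto Y_j \times \overline{X_2}$, using that the product of two irreducible sets is irreducible. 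For item two, the key observation is that if $Y$ is an irreducible closed subset of $\bigcup_i \overline{X_i}$, then $Y = \bigcup_i (Y \cap \overline{X_i})$ is a finite decomposition into closed subsets, so irreducibility forces $Y \subset \overline{X_i}$ for some $i$. Applying this to the top member $Y_d$ of any chain in the closure of $\cup_i X_i$, the entire chain is contained in a single $\overline{X_i}$, giving $\dim(\cup_i X_i) \leq \max_i \dim X_i$; the reverse inequality is item one.

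\textbf{Item three.} Polynomials are continuous in Euclidean topology, so any common zero locus is Euclidean-closed. For finite unions, if $X_1 = V(p_1,\ldots,p_m)$ and $X_2 = V(q_1,\ldots,q_\ell)$, then $X_1 \cup X_2 = V(\{p_i q_j\}_{i,j})$, since the product $p_i q_j$ vanishes at a point iff $p_i$ or $q_j$ does.

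\textbf{Item four.} This splits into (a) $\dim X \leq n-1$ and (b) measure zero. For (a), given any chain $Y_0 \subsetneq \cdots \subsetneq Y_d$ in $X$, appending $\real^n$ yields a chain of length $d+1$ in $\real^n$, forcing $d+1 \leq \dim \real^n = n$. For (b), since $X$ is a proper Zariski-closed set it is contained in the vanishing locus $V(p)$ of some nonzero polynomial $p$, so it suffices to prove $V(p)$ has Lebesgue measure zero. I would induct on $n$: the base case $n=1$ gives $V(p)$ finite; in the inductive step, write $p = \sum_{k=0}^d a_k(x_1,\ldots,x_{n-1}) x_n^k$ with leading coefficient $a_d \not\equiv 0$, apply the inductive hypothesis to get $V(a_d) \subset \real^{n-1}$ of measure zero, observe that on its complement each vertical fiber of $V(p)$ is the finite zero set of a nonzero univariate polynomial in $x_n$, and close with Fubini.

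\textbf{Main obstacle.} The conceptually trickiest piece is the identity $\dim \real^n = n$, which underpins both the top-end bound in item one and the dimension drop in item four; it rests on the Zariski-irreducibility of $\real^n$, a feature the Euclidean topology does not share. The only nontrivial analytic input is the Fubini induction on polynomial vanishing sets in item four; everything else is formal manipulation using the definition of topological dimension and the closure properties of polynomial ideals.
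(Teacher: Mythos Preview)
The paper does not prove this lemma: it states the result and defers to Lemmas~2.2--2.3 and~2.7 of \cite{domanov2015}. There is therefore no in-paper argument to compare against, and your sketch supplies what the paper omits.

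Your argument is correct. Two small points worth tightening. First, the upper bound $\dim\real^n\le n$ is built into the paper's definition (it restricts $d\in[n]$), so the only genuine content in ``$\dim\real^n=n$'' is the lower bound via the coordinate chain and the Zariski-irreducibility of $\real^n$ needed to append $\real^n$ to a chain in item four; you have both of these. Second, in item four the first sentence is stated for an arbitrary subset $X$ with $\dim X<n$, not only for Zariski-closed $X$, whereas your step~(b) starts from ``since $X$ is a proper Zariski-closed set.'' The gap is easily bridged: because dimension is defined through the Zariski closure and $\dim\real^n=n$, the hypothesis $\dim X<n$ forces $\overline{X}\subsetneq\real^n$, hence $X\subset\overline{X}\subset V(p)$ for some nonzero polynomial $p$, and your Fubini induction then applies verbatim. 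Making that reduction explicit would complete the write-up.
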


Lastly, we define the affine variety, the projective variety, and the Grassmannian over the field $\real$. The affine (resp. projective) variety is an irreducible affine (resp. projective) algebraic subset. Note that a product of affine (resp. projective) varieties $V_1$ and $V_2$ is again an affine (resp. projective) variety \cite{sutherland2013}. The projective variety is known to be a complete variety; namely, the following is true for the projective variety \cite{sutherland2013,mumford1988,harris1992}. 
\begin{definition}\label{sec2.4.def1}
   The variety $X$ over a field $\real$ or $\mathbb{C}$ is complete if the projection morphism $\pi:X\times Y\rightarrow Y$ is closed for any variety $Y$. That is, if $U$ is a Zariski-closed subset of $X\times Y$, $\pi(U)$ is also Zariski-closed in $Y$. 
\end{definition}

The Grassmannian $\text{Gr}(d,n)$ is a collection of $d-$dimensional linear subspaces of $\real^n$, which is a manifold of dimension $d(n-d)$. While $\text{Gr}(d,n)$ can be realized as both affine and projective varieties \cite{devriendt2024}, we focus on its projective variety aspect. Note that every $d-$dimensional linear subspace $D$ of $\real^n$ can be represented as a $d\times n$ matrix $Z$ whose rows represent the basis of $D$. Via the Pl\"uker embedding \cite{michalek2021}, which realizes $\text{Gr}(d,n)$ as a projective variety, the matrix $Z$ can be identified with Pl\"uker coordinates and the system of the polynomials that these coordinates should satisfy, so-called the Pl\"uker equation \cite{michalek2021} (see Example $1.1$ of \cite{devriendt2024} for instance). Furthermore, if $\calP_Z$ is an orthogonal projection onto $R(Z)$, then each entry of $|ZZ^\top|\calP_Z$ is a quadratic polynomial in these Pl\"uker coordinates (\cite{devriendt2024}, Theorem $2.1$).   

\section{Smooth manifold $\calC_{p_1,p_2}^{++}$}\label{sec3}
In this section, we prove that $\calC_{p_1,p_2}^{++}$ is a compact, smooth, embedded submanifold of $\calS_p^{++}$. Throughout this and the next section, note that for $A=(A_1,\ldots,A_p)\in (\real^{p_1\times p_2})_*^r$, we write $A_R:=\sum_{i=1}^r A_iA_i^\top$ and $A_C:=\sum_{i=1}^r A_i^\top A_i$ for fixed $p_1/p_2+p_2/p_1<r \leq p$. We shall introduce the following sets and maps:
\begin{align}\label{sec3.eq1}
\begin{split}
        \calH_{p_1,p_2,r}&:=\set{A\in(\real^{p_1\times p_2})_*^r:\rank(A_R)=p_1,\rank(A_C)=p_2},\\
    \calD_{p_1,p_2,r}&:=F_{p_1,p_2,r}^{-1}(\set{(I_{p_1}/p_1,I_{p_2}/p_2,p)}),\; \calC_{p_1,p_2,r}:=\varphi_{p_1,p_2,r}(\calD_{p_1,p_2,r}),\\
    F_{p_1,p_2,r}&:A\in \calH_{p_1,p_2,r}\rightarrow (A_R/\tr(A_R),A_C/\tr(A_R),\tr(A_R))\in \calR_{p_1,p_2}, \\
    \calR_{p_1,p_2}&:=\bar{\calS}_{p_1}^{++}\times \bar{\calS}_{p_2}^{++}\times \real_+,\\
    s_{p_1,p_2,r}&:[B]\in \real_*^{p\times r}/\calO_r\rightarrow BB^\top \in\calS_{p,r}^{+},
\end{split}
\end{align}
Note that $\calR_{p_1,p_2}$ is a smooth manifold of dimension $\binom{p_1+1}{2}+\binom{p_2+1}{2}-1$. These notations will be used in Section \ref{sec4.2} also. For simplicity, since $r=p$ in this section, write $\calH_{p_1,p_2}:=\calH_{p_1,p_2,p}$, $\calD_{p_1,p_2}:=\calD_{p_1,p_2,p}$, $\calC_{p_1,p_2}:=\calC_{p_1,p_2,p}$, $\varphi_{p_1,p_2}:=\varphi_{p_1,p_2,p}$, $F_{p_1,p_2}:=F_{p_1,p_2,p}$ and $s_{p_1,p_2}:=s_{p_1,p_2,p}$. Observe that for any $A\in\calD_{p_1,p_2}$, if $\bar{A}=\varphi_{p_1,p_2}(A)$,
\begin{align*}
    \tr_1(\bar{A}\bar{A}^\top)=p_2I_{p_1},\quad \tr_2(\bar{A}\bar{A}^\top)=p_1I_{p_2},
\end{align*}
satisfying (\ref{sec2.1.eq2}). Therefore, $\calD_{p_1,p_2}$ is a key ingredient to construct $\calC_{p_1,p_2}^{++}$.  

We outline the proof strategy as follows. Although we state the strategy when $r=p$, note that this strategy can be straightforwardly extended to the rank-deficient case. Define the action of $\calO_p$ on $\real_*^{p\times p}$ by $(O,B)\in\calO_p\times \real_*^{p\times p} \rightarrow BO\in \real_*^{p\times p}$. Since the action is smooth, free, and proper as $\calO_p$ is a Lie compact group, $\real_*^{p\times p}/\calO_p$ is a quotient manifold. To show that $\calC_{p_1,p_2}^{++}$ is a smooth manifold, observe that if $\calD_{p_1,p_2}$ is a smooth submanifold embedded in the smooth manifold $\calH_{p_1,p_2}$, so is $\calC_{p_1,p_2}$ in $\real_*^{p\times p}$ as the map $\varphi_{p_1,p_2}$ is a diffeomorphism. Also, if $\calC_{p_1,p_2}$ is $\calO_p-$invariant with the action above, we can show that $\calC_{p_1,p_2}/\calO_{p_1,p_2}$ is embeddded in $\real_*^{p\times p}/\calO_p$. The result that $\calC_{p_1,p_2}^{++}$ is embedded in $\calS_{p_1,p_2}^{++}$ then  follows from the facts that the map $s_{p_1,p_2}$ is a diffeomorphism (\cite{massart2020}, Proposition $2.8$) and $\calC_{p_1,p_2}^{++}\equiv s_{p_1,p_2}(\calC_{p_1,p_2}/\calO_p)$. This strategy and the ancillary results below can be applied when $r<p$. The only difference is the way to show $\calD_{p_1,p_2,r}$ is a smooth manifold as shown in Section \ref{sec4.2}. Now taking $r=p$, we provide the ancillary results to prove the main result of this section. 

\begin{lemma}\label{sec3.lemma1}
The set $\calH_{p_1,p_2}$ is an open smooth submanifold of $(\real^{p_1\times p_2})_*^p$ with $\dim\calH_{p_1,p_2}=p^2$ and the tangent space $T_A\calH_{p_1,p_2}\equiv (\real^{p_1\times p_2})^p$. 
\end{lemma}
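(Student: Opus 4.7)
The plan is to recognize $\calH_{p_1,p_2}$ as an open subset of $(\real^{p_1\times p_2})_*^p$; from this the smooth manifold structure, the dimension, and the tangent space identification all follow immediately, since an open subset of a smooth manifold is automatically an embedded smooth submanifold of the same dimension, with tangent space equal to the ambient one at each point.

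First I would observe that $(\real^{p_1\times p_2})_*^p$ is itself an open subset of the vector space $(\real^{p_1\times p_2})^p$. Under the diffeomorphism $\varphi_{p_1,p_2}$ from the notation list, it corresponds to $\real_*^{p\times p}=\{B\in\real^{p\times p}:|B|\neq 0\}$, which is open in $\real^{p\times p}$ by continuity of the determinant. The ambient space $(\real^{p_1\times p_2})^p$ has real dimension $p_1p_2\cdot p=p^2$, so $(\real^{p_1\times p_2})_*^p$ inherits the structure of a smooth manifold of dimension $p^2$ whose tangent space at every point is canonically identified with $(\real^{p_1\times p_2})^p$.

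Next I would show that the two additional rank conditions that define $\calH_{p_1,p_2}$ cut out an open subset of $(\real^{p_1\times p_2})_*^p$. The maps $A\mapsto A_R=\sum_i A_iA_i^\top$ and $A\mapsto A_C=\sum_i A_i^\top A_i$ are polynomial in the entries of $A$, hence smooth, while the conditions $\rank(A_R)=p_1$ and $\rank(A_C)=p_2$ are both open conditions (full rank is the complement of the vanishing locus of the determinant, a nonzero polynomial). Thus $\calH_{p_1,p_2}$ is the intersection of $(\real^{p_1\times p_2})_*^p$ with two preimages of open sets under smooth maps, and is therefore open in $(\real^{p_1\times p_2})_*^p$.

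As a sanity check I would verify non-emptiness (and in fact show the rank conditions are automatic in this regime). For any $A\in(\real^{p_1\times p_2})_*^p$, set $\bar A=\varphi_{p_1,p_2}(A)$; then $\bar A\bar A^\top\in\calS_p^{++}$, and a direct block computation using $\bar A=[\text{vec}(A_1),\dots,\text{vec}(A_p)]$ gives $A_R=\tr_1(\bar A\bar A^\top)$ and $A_C=\tr_2(\bar A\bar A^\top)$. By the partial-trace positivity fact already cited in the excerpt \cite{zhang2012,zhang2011}, both are positive definite, so $\rank(A_R)=p_1$ and $\rank(A_C)=p_2$ automatically, giving $\calH_{p_1,p_2}=(\real^{p_1\times p_2})_*^p$ when $r=p$. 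Either way, combining openness with the preceding paragraph concludes the proof: $\calH_{p_1,p_2}$ is an open smooth submanifold of $(\real^{p_1\times p_2})_*^p$, with $\dim\calH_{p_1,p_2}=p^2$ and $T_A\calH_{p_1,p_2}\equiv(\real^{p_1\times p_2})^p$. There is no substantive obstacle here; the only point requiring care is unpacking the definition of $(\real^{p_1\times p_2})_*^p$ and confirming that the rank constraints are open conditions, which is standard. The real work of the section begins with $\calD_{p_1,p_2}$, which is not open and must be handled by a submersion argument.
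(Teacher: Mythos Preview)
Your argument is correct and follows essentially the same route as the paper: both show $\calH_{p_1,p_2}$ is open in $(\real^{p_1\times p_2})_*^p$ by writing the rank constraints as preimages of open sets under smooth maps (the paper factors through $\phi_R(A)=[A_1,\ldots,A_p]$ and $\phi_C(A)=[A_1^\top,\ldots,A_p^\top]$, you work directly with the determinants of $A_R,A_C$, which is equivalent). Your additional observation that $A_R=\tr_1(\bar A\bar A^\top)$ and $A_C=\tr_2(\bar A\bar A^\top)$ are automatically positive definite when $r=p$, so that in fact $\calH_{p_1,p_2}=(\real^{p_1\times p_2})_*^p$, is a nice simplification the paper does not make explicit.
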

\begin{proof}
See Appendix \ref{append.A.2} for the proof.
\end{proof}

Lemma \ref{sec3.lemma1} ensures that $F_{p_1,p_2}$ is a smooth map between smooth manifolds. Next, we establish that $\calD_{p_1,p_2}$ is a closed and smooth submanifold embedded in $\real_*^{p\times p}$, which is a key ingredient to construct $\calC_{p_1,p_2}^{++}$. 

\begin{prop}\label{sec3.prop1}
    The level set $\calD_{p_1,p_2}:=F_{p_1,p_2}^{-1}\parentheses{\set{(I_{p_1}/p_1,I_{p_2}/p_2,p)}}$ is a closed, smooth, embedded submanifold of $\calH_{p_1,p_2}$ with dimension $p^2-\binom{p_1+1}{2}-\binom{p_2+1}{2}+1$.
\end{prop}
We provide a complete proof of Proposition \ref{sec3.prop1} in Appendix \ref{append.A.2}. Here we provide the main idea of the proof.

\begin{proof}[Sketch of Proof]
We use the constant-rank level set theorem (\cite{lee2012}, Theorem $5.12$) to prove the result. Take $B=(B_1,\ldots,B_p)\in T_A\calH_{p_1,p_2}$. Let $a=[\text{vec}(A_1)^\top,\ldots,\text{vec}(A_p)^\top]^\top$, and $b=[\text{vec}(B_1)^\top,\ldots,\text{vec}(B_p)^\top]^\top$. Since $\tr(A_R)=p$ for any $A\in\calD_{p_1,p_2}$, the differential of $F_{p_1,p_2}$ at $A$ is given by
\begin{align*}
    dF_{p_1,p_2}(A)[B]=\left(\frac{1}{p}\sum_{i=1}^p(A_iB_i^\top+B_iA_i^\top)-\frac{2\tr\parentheses{\sum_{i=1}^p A_iB_i^\top}}{p_1^2p_2}I_{p_1},\right.\\
    \left. \frac{1}{p}\sum_{i=1}^p (A_i^\top B_i+B_i^\top A_i) -\frac{2\tr\parentheses{\sum_{i=1}^p A_iB_i^\top}}{p_1p_2^2}I_{p_2},  2a^\top b \right)
\end{align*}
for any $A\in \calD_{p_1,p_2}$. Using vec-Kronecker identity, the value of $dF_{p_1,p_2}(A)[B]$ can be equivalently identified as 
\begin{align}\label{sec3.prop1.eq1}
\left[\begin{array}{c}
J_1\\
J_2\\
J_3 
\end{array}\right]b:=\underbrace{\left[\begin{array}{c}
\frac{1}{p}(I_{p_1^2}+K_{(p_1,p_1)})[A_1\otimes I_{p_1},\ldots,A_p\otimes I_{p_1}]-\frac{2}{p_1^2p_2}\text{vec}(I_{p_1})a^\top \\
\frac{1}{p}(I_{p_2^2}+K_{(p_2,p_2)})[I_{p_2}\otimes A_1^\top,\ldots,I_{p_2}\otimes A_p^\top]-\frac{2}{p_1p_2^2}\text{vec}(I_{p_2})a^\top\\
2a^\top 
\end{array}\right]}_{J:=J(A)} b.
\end{align} 
Hence, the dimension of the image of $dF_{p_1,p_2}(A)$ as a linear operator over $T_A\calH_{p_1,p_2}$ is equivalent to the rank of $J$. To compute the rank of $J$, note that 
\begin{align*}
        \text{rank}(J)=\dim C(J^\top)&=\dim C(J_1^\top)+\dim C(J_2^\top)+\dim C(J_3^\top)-\dim C(J_1^\top) \cap  C(J_2^\top)\\
    &-\dim C(J_2^\top) \cap  C(J_3^\top)-\dim C(J_1^\top) \cap  C(J_3^\top)\\
    &+\dim C(J_1^\top) \cap  C(J_2^\top)\cap C(J_3^\top).
\end{align*}
We claim in Appendix \ref{append.A.2} that 
\begin{align}\label{sec3.prop1.eq2}
\begin{split}
     &\dim C(J_1^\top)=\binom{p_1+1}{2}-1,\, \dim C(J_2^\top)=\binom{p_2+1}{2}-1,\, \dim C(J_3^\top)=1,\\
        &\dim C(J_1^\top) \cap  C(J_2^\top)=\dim C(J_2^\top) \cap  C(J_3^\top)=\dim C(J_1^\top) \cap  C(J_3^\top)\\
        =&\dim C(J_1^\top) \cap  C(J_2^\top)\cap C(J_3^\top)=0.
\end{split}
\end{align}
This implies that $\rank(J)=\dim \calR_{p_1,p_2}=\binom{p_1+1}{2}+\binom{p_2+1}{2}-1$. Since this holds for any $A\in \calD_{p_1,p_2}$, the constant-rank level set theorem implies that $F_{p_1,p_2}$ is a submersion on $\calD_{p_1,p_2}$ and $\calD_{p_1,p_2}$ is a smooth embedded submanifold of $\calH_{p_1,p_2}$ with a dimension 
\begin{align*}
    \dim \calH_{p_1,p_2}-\dim\calR_{p_1,p_2}=p^2-\binom{p_1+1}{2}-\binom{p_2+1}{2}+1.
\end{align*}

\end{proof}

By Proposition \ref{sec3.prop1}, the image of $\calD_{p_1,p_2}$ by the diffeomorphism $\varphi_{p_1,p_2}$, $\calC_{p_1,p_2}$, is closed and embedded in $\real_*^{p\times p}$. As discussed above, we show that the smooth manifold $\calC_{p_1,p_2}$ is $\calO_p-$invariant, and closed and embedded in $\real_*^{p\times p}/\calO_p$. 

\begin{lemma}\label{sec3.lemma2}
   For any $X\in \calC_{p_1,p_2}$ and $O\in\calO_p$, $XO\in \calC_{p_1,p_2}$. Hence, the action $(O,X)\in\calO_p\times \calC_{p_1,p_2}\rightarrow XO\in\calC_{p_1,p_2}$ is well-defined, smooth, and free.
\end{lemma}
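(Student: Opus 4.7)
The plan is to reduce the claim to a direct computation with the defining constraints of $\calD_{p_1,p_2}$, exploiting that these constraints are quadratic Gram-type sums and that orthogonal changes of variable preserve them. Since $\calC_{p_1,p_2} = \varphi_{p_1,p_2}(\calD_{p_1,p_2})$ and $\varphi_{p_1,p_2}$ is a diffeomorphism, all the structural work takes place on the tuple side.

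Given $X \in \calC_{p_1,p_2}$, I first write $X = \varphi_{p_1,p_2}(A)$ for some $A = (A_1,\ldots,A_p) \in \calD_{p_1,p_2}$, so the identities in \eqref{sec2.1.eq2} hold. Because $\varphi_{p_1,p_2}$ is linear in each coordinate and $\mathsf{vec}$ commutes with linear combinations, we have $XO = \varphi_{p_1,p_2}(A')$ where $A'_j := \sum_{i=1}^p O_{ij} A_i$ for $O=(O_{ij})\in\calO_p$. The central computation is
\[
\sum_{j=1}^p A'_j (A'_j)^\top \;=\; \sum_{i,k=1}^p \Bigl(\sum_{j=1}^p O_{ij}O_{kj}\Bigr) A_i A_k^\top \;=\; \sum_{i,k=1}^p (OO^\top)_{ik}\, A_i A_k^\top \;=\; \sum_{i=1}^p A_i A_i^\top \;=\; p_2 I_{p_1},
\]
using $OO^\top = I_p$ so that $\sum_j O_{ij}O_{kj} = \delta_{ik}$; an entirely analogous computation yields $\sum_j (A'_j)^\top A'_j = p_1 I_{p_2}$. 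Combined with the observation that $XO \in \real_*^{p\times p}$ (since $X$ has full rank and $O$ is invertible), the row and column partial-rank conditions defining $\calH_{p_1,p_2}$ are preserved, so $A' \in \calH_{p_1,p_2}$. Hence $A' \in \calD_{p_1,p_2}$ and $XO \in \calC_{p_1,p_2}$, establishing well-definedness.

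For the remaining claims, smoothness of the action is immediate: matrix multiplication $(O,X) \mapsto XO$ is polynomial on the ambient Euclidean spaces $\real^{p\times p}\times\real^{p\times p}$, so its restriction to the embedded submanifolds $\calO_p \times \calC_{p_1,p_2}$ is smooth into $\calC_{p_1,p_2}$ (which is embedded in $\real_*^{p\times p}$ by Proposition \ref{sec3.prop1} together with the diffeomorphism $\varphi_{p_1,p_2}$). Freeness is equally direct: if $XO = X$ for some $X \in \calC_{p_1,p_2} \subset \real_*^{p\times p}$, then the full rank (hence invertibility) of $X$ forces $O = X^{-1}X = I_p$. I do not anticipate any essential obstacle; the only genuine content is the central Gram-type identity above, which is resolved cleanly by the orthogonality of $O$.
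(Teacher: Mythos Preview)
Your proof is correct and takes essentially the same approach as the paper: both reduce to the Gram-type computation $\sum_j A'_j(A'_j)^\top = \sum_{i,k}(OO^\top)_{ik}A_iA_k^\top = \sum_i A_iA_i^\top$ using orthogonality of $O$, together with the observation that full rank is preserved under right-multiplication by an invertible matrix. The arguments for smoothness and freeness are likewise the same.
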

\begin{proof}
See Appendix \ref{append.A.2} for the proof.
\end{proof}

\begin{lemma}\label{sec3.lemma3}
      Suppose $G$ is a compact Lie group acting smoothly and freely on a smooth manifold $\calM$. Assume that a smooth manifold $\calN$ is embedded in $\calM$ and $G-$invariant. Then $\calN/G$ is a smooth, embedded submanifold of $\calM/G$. 
\end{lemma}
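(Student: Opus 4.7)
The plan is to construct the natural map $\iota:\calN/G\to \calM/G$ that sends the $\calN$-orbit of $x$ to its $\calM$-orbit, and to show that it realizes $\calN/G$ as a smooth embedded submanifold of $\calM/G$. Because $G$ is compact, its smooth action on $\calM$ is automatically proper (\cite{lee2012}, Corollary $21.6$), and restricting to the $G$-invariant submanifold $\calN$ yields a smooth, free, and proper action on $\calN$; by the review in Section \ref{sec2.3}, $\calN/G$ is then a smooth manifold and the canonical projection $\pi_\calN:\calN\to \calN/G$ is a smooth submersion. Writing $\pi_\calM:\calM\to \calM/G$ and $\iota_0:\calN\hookrightarrow \calM$ for the canonical projection and inclusion, the map $\iota$ is well defined and injective because the orbit $G\cdot x$ is the same set in $\calN$ and in $\calM$ whenever $x\in\calN$.

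Next I would verify smoothness and immersivity of $\iota$. Smoothness is immediate from the universal property of the submersion $\pi_\calN$: the composition $\pi_\calM\circ \iota_0$ is smooth and constant on $G$-orbits in $\calN$, so $\iota\circ \pi_\calN = \pi_\calM\circ \iota_0$ determines a unique smooth $\iota$. For injectivity of $d\iota([x]_\calN)$, I would lift any tangent vector $v$ in its kernel to $w\in T_x\calN$ via $\pi_\calN$ and differentiate the previous identity to obtain $d\iota_0(x)[w]\in \ker d\pi_\calM(x)=T_x(G\cdot x)\subset T_x\calM$. Since $G\cdot x\subset \calN$ by $G$-invariance, its tangent space at $x$ is a subspace of $T_x\calN$ carried isomorphically onto $T_x(G\cdot x)\subset T_x\calM$ by $d\iota_0(x)$, so $w\in \ker d\pi_\calN(x)$ and $v=0$.

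The main obstacle is the topological-embedding condition, since a smooth injective immersion need not be an embedding. The plan is to promote the continuous surjection $\pi_\calM|_\calN:\calN\to \pi_\calM(\calN)$ to a quotient map. The orbit map $\pi_\calM$ is open, and because $\calN$ is $G$-invariant, for every open $W\subset \calM$ one has
\begin{align*}
  \pi_\calM(W\cap \calN) \;=\; \pi_\calM\bigl((G\cdot W)\cap \calN\bigr) \;=\; \pi_\calM(G\cdot W)\cap \pi_\calM(\calN),
\end{align*}
which is open in $\pi_\calM(\calN)$; thus $\pi_\calM|_\calN$ is an open continuous surjection, hence a quotient map. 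Its fibres are exactly the $G$-orbits in $\calN$, matching those of $\pi_\calN$, so uniqueness of quotients makes $\iota$ a homeomorphism from $\calN/G$ onto $\pi_\calM(\calN)$ with the subspace topology of $\calM/G$. Combined with the injective-immersion property, this upgrades $\iota$ to a smooth embedding and identifies $\calN/G$ with an embedded submanifold of $\calM/G$.
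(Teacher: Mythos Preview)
Your proof is correct and follows essentially the same route as the paper: construct the induced map $\iota:\calN/G\to\calM/G$, verify it is a smooth injective immersion, and then upgrade it to an embedding via the open-quotient/saturated-subset argument. The only difference is cosmetic: the paper outsources the injective-immersion step to an external reference and the topological-embedding step to Munkres' Theorem~22.1, whereas you spell both out directly.
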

\begin{proof}
See Appendix \ref{append.A.2} for the proof.
\end{proof}

With the ingredients above, we are ready to prove the main result of this section. 
\begin{thm}\label{sec3.thm1}
    The set $\calC_{p_1,p_2}^{++}$ is a compact, smooth, embedded submanifold of $\calS_p^{++}$ with a dimension $\binom{p+1}{2}-\binom{p_1+1}{2}-\binom{p_2+1}{2}+1$.
\end{thm}
\begin{proof}
      The compactness follows as (\ref{sec2.1.eq2}) implies that $\tr(C)=p$ for any $C\in\calC_{p_1,p_2}^{++}$. To show that $\calC_{p_1,p_2}^{++}$ is a smooth submanifold embedded in $\calS_{p}^{++}$, note that $\calD_{p_1,p_2}$ is embedded in $\calH_{p_1,p_2}$ by Proposition \ref{sec3.prop1}, and $\calH_{p_1,p_2}$ is also embedded in $(\real^{p_1\times p_2})_*^p$ as an open submanifold. Thus, $\calD_{p_1,p_2}$ is embedded in $(\real^{p_1\times p_2})_*^p$. Hence, $\calC_{p_1,p_2}\equiv \varphi_{p_1,p_2}(\calD_{p_1,p_2})$ is embedded in $\real_*^{p\times p}\equiv\varphi_{p_1,p_2}( (\real^{p_1\times p_2})_*^p)$ as the map $\varphi_{p_1,p_2}$ is a diffeomorphism. By Lemma \ref{sec3.lemma2}, $\calC_{p_1,p_2}$ is $\calO_p-$invariant. Thus, taking $\calM=\real_*^{p\times p}$, $\calN=\calC_{p_1,p_2}$, and $G=\calO_p$ in Lemma \ref{sec3.lemma3}, we have that $\calC_{p_1,p_2}/\calO_p$ is embedded in $\real_*^{p\times p}/\calO_p$. Also, the quotient manifold theorem implies that 
\begin{align*}
    \dim \calC_{p_1,p_2}/\calO_p=\dim\calC_{p_1,p_2}-\dim\calO_p=\binom{p+1}{2}-\binom{p_1+1}{2}-\binom{p_2+1}{2}+1.
\end{align*}
By Lemma \ref{sec3.lemma2} and (\ref{sec2.1.eq2}), we have that $AA^\top=BB^\top\in\calC_{p_1,p_2}^{++}$ for $A,B\in\real_*^{p\times p}$ if and only if $A,B\in\calC_{p_1,p_2}$ and $A=BO$ for some $O\in\calO_p$. Because the map $s_{p_1,p_2}$ defined in (\ref{sec3.eq1}) is a diffeomorphism by Proposition $2.8$ of \cite{massart2020}, $\calC_{p_1,p_2}^{++}\equiv s_{p_1,p_2}(\calC_{p_1,p_2}/\calO_p)$ is a smooth submanifold embedded in $\calS_{p}^{++}\equiv s_{p_1,p_2}(\real_*^{p\times p}/\calO_p)$.
\end{proof}

Since $\calC_{p_1,p_2}^{++}$ is a smooth manifold, we shall identify its tangent space. 

\begin{prop}\label{sec3.prop2}
   For $C\in\calC_{p_1,p_2}^{++}$, the tangent space of $\calC_{p_1,p_2}^{++}$ at $C$ is given by 
   \begin{align*}
      T_C\calC_{p_1,p_2}^{++}\equiv \set{W\in\calS_p:\tr_1(W)=\mathbf{0}_{p_1\times p_1},\tr_2(W)=\mathbf{0}_{p_2\times p_2}}.   
   \end{align*}
\end{prop}
\begin{proof}
See Appendix \ref{append.A.2} for the proof.
\end{proof}

\section{Smooth manifold $\calC_{p_1,p_2,r}^+$}\label{sec4}

\subsection{Canonically decomposable matrices}\label{sec4.1}
In this section, we review the notion of canonical decomposability of $A=(A_1,\ldots,A_r)\in (\real^{p_1\times p_2})^r$, and justify removing the set of such matrices from $\calH_{p_1,p_2,r}$ to construct $\calC_{p_1,p_2,r}^+$ for $\calH_{p_1,p_2,r}$ defined in (\ref{sec3.eq1}). We first give its definition below.    
\begin{definition}\label{sec4.1.def1}
    Suppose $A=(A_1,\ldots,A_r)\in (\real^{p_1\times p_2})^r$. We say $A$ is canonically decomposable if there exists a $(P,Q)\in GL_{p_1}\times GL_{p_2}$ such that, for each $i\in[r]$, $PA_iQ^{-1}$ is of a non-trivial block-diagonal form, i.e., $PA_iQ^{-1}=\oplus_{j=1}^2 A_{ij}$, where $A_{i1}\in \real^{a\times b}$ and $A_{i2}\in\real^{(p_1-a)\times (p_2-b)}$ for some $1\leq a\leq p_1-1$ and $1\leq b\leq p_2-1$. Otherwise, $A$ is canonically indecomposable.   
\end{definition}

As an example of canonical decomposability, for generic element $(A_1,A_2)$ in $(\real^{4\times 7})^2$, there exists a $(P,Q)\in GL_{4}\times GL_{7}$ such that $PA_iQ^{-1}=\oplus_{j=1}^3 B_{ij}$, where $B_{i1},B_{i2}\in \real^{1\times 2}$ and $B_{i3}\in \real^{2\times 3}$ (\cite{derksen2021}, Example $2.8$). Here the term generic should be understood as an almost sure sense. An example with specific values of $A_i$'s and $(P,Q)$ is provided in Example $4$ of \cite{maehara2011}.

The notion of canonical decomposability is mainly motivated by Kronecker quiver representation and its applications in the analysis of the sample size threshold for the existence of Kronecker MLE \cite{derksen2021} (see \cite{kac1980,kac1982} also). To say informally, the $n-$Kronecker quiver $Q$ is a directed acyclic graph of two vertices $x$ and $y$ with $n$ arrows. Then, a representation of $Q$ is to assign a finite-dimensional vector space to each vertex. If this representation cannot be written as a direct sum of a non-trivial subrepresentation, such a representation is referred to as a $\sigma-$stable representation. In the context of the Kronecker MLE problem, these vector spaces correspond to $\real^{p_2}$ and $\real^{p_1}$, and the arrows correspond to the $n$ data matrices. Using this Kronecker-quiver representation, along with the group-invariant theory, \cite{derksen2021} characterized the scenarios of $(p_1,p_2,r)$ for which Kronecker MLE exists (see their Theorem $1.2$). For generic $(p_1,p_2)$, it turns out that $r>p_1/p_2+p_2/p_1$. Under this threshold, the uniqueness of Kronecker MLE also follows if it exists. 

To interpret this threshold, the canonical decomposability of $r$ data matrices in Definition \ref{sec4.1.def1} corresponds to whether the $r$ data matrices induce a $\sigma-$stable Kronecker quiver representation as arrows (see Section $3$ and $5$ of \cite{derksen2021}). Then the threshold on $r$ for which Kronecker MLE exists comes from the minimum number of arrows for which the factors in $\real^{p_1}$ and $\real^{p_2}$ are well-connected. We shall formally formulate this below. We emphasize that this decomposability notion also appears in other works on the sample size threshold analysis for Kronecker MLE. For example, \cite{soloveychik2016} studied the threshold by analyzing the contribution of each block in the canonical decomposition of data matrices to the growth of the objective function $d$ in (\ref{sec2.1.eq1}) (see Section $6.6$ of \cite{soloveychik2016}). They referred canonically decomposable data matrices to as {\it bad} samples.

In these works, the sample size threshold should be understood in a generic (almost sure) sense. If $r \geq p$ and $r$ data matrices are linearly independent, then the Kronecker MLE always uniquely exists \cite{srivastava2008}, not just generically. However, in a strict algebra sense, even if $r>p_1/p_2+p_2/p_1$ and the linear independence holds for data matrices, the Kronecker MLE may not exist as observed in Example \ref{sec2.1.ex1}. Nevertheless, note that the data matrices in that example are canonically decomposable. Thus, a natural question one could raise is whether any canonically decomposable matrices never admit the Kronecker MLE. It turns out that the answer is no, as shown in Example \ref{sec4.1.ex1}. This example also suggests that the canonically decomposable matrices are the singularities that may prevent the set of rank-$r$ cores from being a smooth manifold, in light of Sard's theorem. 

\begin{example}\label{sec4.1.ex1}
    Take $p_1=p_2=r=3$. Consider the subset
    \begin{align*}
        \calU:=\set{(I_3,Y_1,Y_2):Y_1=Q\oplus [1],Y_2=Q^\top\oplus [1],Q\in \calO_2,Q\neq \pm I_2}.
    \end{align*}
It is obvious that every $(A_1,A_2,A_3)\in\calU$ satisfy (\ref{sec2.1.eq2}), thereby inducing the Kronecker MLE $I_p$, and $\calU\subset \calH_{3,3,3}$. Also, this set is clearly canonically decomposable. However, the map $F_{3,3,3}$ defined in (\ref{sec3.eq1}) is not a submersion on $\calU$. The proof is deferred to Appendix \ref{append.A.4}.
\end{example}

Denote the subset of canonically decomposable matrices in $(\real^{p_1\times p_2})^r$ by $\calV_{p_1,p_2,r}$. We show that this set is closed and has a Lebesgue measure zero. Note that its analogous results have been proven based on group-invariant theory and representation-theoretic approaches (see Proposition $3.19$ of \cite{hoskins2012}, and Lemma $2.16$ and Section $5$ of \cite{derksen2021}). However, we provide a proof using a more direct language of algebraic geometry to make the article self-contained and better motivate the canonical decomposability in studying the fixed-rank core covariance manifold.    

\begin{lemma}\label{sec4.1.lemma1}
    Define a map $m:[1,\alpha-1]\times[1,\beta-1]\rightarrow \real_+$ by 
\begin{align*}
    m(a,b):=a(\alpha-a)+b(\beta-b)+r\parentheses{ab+(\alpha-a)(\beta-b)}
\end{align*}
for some fixed $\alpha,\beta\geq 2$ and $r>\alpha/\beta+\beta/\alpha$. Then the maximum of $m$ is strictly smaller than $r\alpha\beta $.
\end{lemma}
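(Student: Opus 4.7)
My plan is to reduce the claim to a sharp quadratic inequality that isolates the hypothesis $r > \alpha/\beta + \beta/\alpha$ cleanly. First, I would observe the identity $ab + (\alpha-a)(\beta-b) = \alpha\beta - [a(\beta-b)+(\alpha-a)b]$, which lets me rewrite
\begin{align*}
m(a,b) - r\alpha\beta = a(\alpha-a) + b(\beta-b) - r\bigl[a(\beta-b)+(\alpha-a)b\bigr].
\end{align*}
Setting $a' := \alpha-a$ and $b' := \beta-b$ (both strictly positive on the domain), the desired bound $m(a,b) < r\alpha\beta$ becomes equivalent to $aa' + bb' < r(ab' + a'b)$.

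The main step is to establish the following sharp inequality, \emph{independent of $r$}:
\begin{align*}
aa' + bb' < \frac{\alpha^2+\beta^2}{\alpha\beta}(ab' + a'b) \qquad \text{on the entire domain.}
\end{align*}
I would normalize by $u = a/\alpha$ and $v = b/\beta$, which gives $aa'+bb' = \alpha^2 u(1-u) + \beta^2 v(1-v)$ and $ab' + a'b = \alpha\beta(u+v-2uv)$, so the inequality reduces to $(\alpha^2+\beta^2)(u+v-2uv) > \alpha^2 u(1-u) + \beta^2 v(1-v)$. Using the elementary identity $u^2 - 2uv + v = (u-v)^2 + v(1-v)$ and its symmetric partner, the difference of the two sides collapses to the manifest sum of squares
\begin{align*}
(\alpha^2+\beta^2)(u-v)^2 + \alpha^2\, v(1-v) + \beta^2\, u(1-u),
\end{align*}
which is strictly positive because $u \in [1/\alpha, 1-1/\alpha]$ and $v \in [1/\beta, 1-1/\beta]$ force $u(1-u), v(1-v) > 0$.

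Combining this with the hypothesis $r > (\alpha^2+\beta^2)/(\alpha\beta)$ will give the chain
\begin{align*}
aa' + bb' < \frac{\alpha^2+\beta^2}{\alpha\beta}(ab' + a'b) \leq r(ab' + a'b),
\end{align*}
so that $m(a,b) < r\alpha\beta$ pointwise on the compact rectangle $[1,\alpha-1]\times[1,\beta-1]$; continuity and compactness then ensure the attained maximum inherits the strict inequality. The only non-routine obstacle is spotting this reformulation in the first place: the raw Hessian of $m$ is saddle-shaped with interior critical point $(\alpha/2,\beta/2)$, so a direct calculus attack would require a boundary-edge case analysis, whereas the $(u,v)$-normalization collapses everything into a single sum-of-squares identity.
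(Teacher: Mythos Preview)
Your proof is correct and takes a genuinely different route from the paper's. The paper proceeds by a calculus argument: it locates the unique interior stationary point $(\alpha/2,\beta/2)$, checks $m$ there directly, and then handles the boundary by reducing (via the symmetry $m(a,b)=m(\alpha-a,\beta-b)$ and the standing assumption $\alpha\geq\beta$) to the one-dimensional quadratic $f(b)=m(1,b)$ on $[1,\beta-1]$, arguing that its vertex lies to the left of $b=1$ so that the edge maximum is $m(1,1)$, and finally verifying $r\alpha\beta - m(1,1) = (r-1)(\alpha+\beta-2)>0$. Your approach bypasses the interior/boundary split entirely: the rewrite $m(a,b)-r\alpha\beta = aa'+bb' - r(ab'+a'b)$ together with the sum-of-squares identity
\[
(\alpha^2+\beta^2)(u+v-2uv) - \alpha^2 u(1-u) - \beta^2 v(1-v) = (\alpha^2+\beta^2)(u-v)^2 + \alpha^2 v(1-v) + \beta^2 u(1-u)
\]
delivers the pointwise bound in one stroke and shows exactly where the hypothesis $r > \alpha/\beta+\beta/\alpha$ enters. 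This is cleaner and avoids all case analysis (in particular it treats both pairs of edges uniformly), whereas the paper's argument is more hands-on but produces the explicit corner value $m(1,1)$ as a by-product.
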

\begin{proof}
    See Appendix \ref{append.A.3} for the proof. 
\end{proof}

\begin{prop}\label{sec4.1.prop1}
    Define a subset $\calV_{p_1,p_2,r}\subset (\real^{p_1\times p_2})^r$ consisting of canonically decomposable $A\in (\real^{p_1\times p_2})^r$. Then the set $\calV_{p_1,p_2,r}$ is Zarisi-closed and thus closed in Euclidean sense. Furthermore, if $p_1/p_2+p_2/p_1<r\leq p_1p_2$, the dimension of $\calV_{p_1,p_2,r}$ is strictly smaller than $p_1p_2r$. Thus, $\calV_{p_1,p_2,r}$ is closed in Euclidean sense and has a Lebesgue measure zero.
\end{prop}
\begin{proof}
   See Appendix \ref{append.A.3} for the proof. 
\end{proof}

Now we mathematically formulate how the canonically indecomposability induces the connectivity between the row and column variables illustrated above. Note that this is crucial in concluding that the set of rank$-r$ cores is indeed a smooth manifold in Section \ref{sec4.2}. To this end, we give the definition of an undirected bipartite graph and provide its mathematical formulation via the canonically indecomposability.

\begin{definition}\label{sec4.1.def2}
    Suppose $G=(V,E)$ is an undirected graph with a vertex set $V$ and an edge set $E$. The graph $G$ is connected if there is a path between any two vertices in $G$, otherwise disconnected. Also, the graph $G$ is bipartite if $V$ can be partitioned into two disjoint and nonempty sets $V_1$ and $V_2$ such that every edge of $G$ connects a vertex in $V_1$ to one in $V_2$. Hence, a vertex in $V_1$ can be reached from the other vertex in $V_2$ only after alternating between $V_1$ and $V_2$, provided that there is a path.  
\end{definition}

A standard fact on the disconnected graph is that any such graph can be decomposed into connected components, which are maximally connected subgraphs. Then the result of the connectivity of the undirected bipartite graph induced by canonically indecomposable matrices is immediate.   

\begin{prop}\label{sec4.1.prop2}
Suppose $A=(A_1,\ldots,A_r)\in (\real^{p_1\times p_2})^r$ is canonically indecomposable. Take $(P,Q)\in GL_{p_1}\times GL_{p_2}$. Define a bipartite undirected graph $G_{A,P,Q}:=(\set{s_j:j\in[p_1]}\sqcup \set{q_k:k\in[p_2]},E)$, where $s_j$ is connected to $q_k$ if and only if there exists $i\in[r]$ such that $(PA_iQ^{-1})_{jk}\neq 0$. Then $G_{A,P,Q}$ is connected. 
\end{prop}
\begin{proof}
Suppose otherwise. Then there exists indecomopsable $A$ and $(P,Q)\in GL_{p_1}\times GL_{p_2}$ such that the graph $G_{A,P,Q}$ is disconnected. Hence, there exist partitions $U_1$ and $U_2$ of $\set{s_i}$ and accordingly $W_1$ and $W_2$ of $\set{q_j}$ such that a vertex in $U_1$ (resp. $U_2$) is never connected to $W_2$ (resp. $W_1$). After arranging the row and columns of $PA_iQ^{-1}$, we can obtain $(P',Q')\in GL_{p_1}\times GL_{p_2}$ such that $P'A_i(Q')^{-1}$ is of non-trivial block-diagonal form where the zero entries correspond to the absence of edges between $U_1$ (resp. $U_2$) and $W_2$ (resp. $W_1$), contradicting the indecomposability of $A$. 
\end{proof}

\subsection{Proof of the smooth manifold $\calC_{p_1,p_2,r}^+$}\label{sec4.2}
Using the ingredients developed in Section \ref{sec4.1}, together with analogies to ancillary results in Section \ref{sec3}, we prove that $\calC_{p_1,p_2,r}^+$ is a compact smooth submanifold embedded in $\calS_{p,r}^+$. To this end, recall the notations in (\ref{sec3.eq1}). Following the discussion and results in Section \ref{sec4.1}, we shall rewrite $\calH_{p_1,p_2,r}:=\calH_{p_1,p_2,r}\setminus \calV_{p_1,p_2,r}$, and the rest of the notations in (\ref{sec3.eq1}) are built upon this $\calH_{p_1,p_2,r}$. By Proposition \ref{sec4.1.prop1} and a version of Lemma \ref{sec3.lemma1}, $\calH_{p_1,p_2,r}$ is open in $(\real^{p_1\times p_2})_*^r$ and thus has the tangent space $(\real^{p_1\times p_2})^r$. Also, as an analogy to (\ref{sec3.prop1.eq1}), we define the following matrix-valued linear operator $J$ on $\calH_{p_1,p_2,r}$ by
\begin{align}\label{sec4.2.eq1}
    J(A):=\left[\begin{array}{c}
    J_1(A)\\
    J_2(A)\\
    J_3(A)
    \end{array}\right]=\left[\begin{array}{c}
\frac{1}{p}(I_{p_1^2}+K_{(p_1,p_1)})[A_r\otimes I_{p_1},\ldots,A_r\otimes I_{p_1}]-\frac{2}{p_1^2p_2}\text{vec}(I_{p_1})a^\top \\
\frac{1}{p}(I_{p_2^2}+K_{(p_2,p_2)})[I_{p_2}\otimes A_r^\top,\ldots,I_{p_2}\otimes A_r^\top]-\frac{2}{p_1p_2^2}\text{vec}(I_{p_2})a^\top\\
2a^\top 
\end{array}\right],
\end{align}
where $a=[\text{vec}(A_1)^\top,\ldots, \text{vec}(A_r)^\top ]^\top$. The proof strategy to establish the main result of this section is similar to that in Section \ref{sec3}. The ancillary results to prove Theorem \ref{sec3.thm1} can be established similarly when $r<p$. A slight difference lies in establishing the analogy of Proposition \ref{sec3.prop1}; namely, that  $\calD_{p_1,p_2,r}$ is a smooth, closed, and embedded submanifold of $(\real^{p_1\times p_2})_*^r$. Following the analogy to Proposition \ref{sec3.prop1}, we show that $\rank(J(A))=\dim \calR_{p_1,p_2}$ for any $A\in\calD_{p_1,p_2,r}$ so that the constant-rank level set theorem applies. As in the proof of Proposition \ref{sec3.prop1}, it suffices to verify (\ref{sec3.prop1.eq2}), with $J_i:=J_i(A)$ for $i=1,2,3$. Now the difference arises in the way showing that $\dim C(J_1^\top)\cap C(J_2^\top)=0$, which relies on the connectivity between row and column variables by canonically indecomopsable matrices in Proposition \ref{sec4.1.prop2}. 

Using the proof strategy outlined above, we state the result that $\calC_{p_1,p_2,r}^{+}$ is compact and embedded manifold in $\calS_{p,r}^+$ and provide a sketch of proof. The complete proof is deferred to Appendix \ref{append.A.4}.

\begin{thm}\label{sec4.2.thm1}
Recall the sets and maps in (\ref{sec3.eq1}). With $\calH_{p_1,p_2,r}$ defined above, the followings are true:
\begin{itemize}
    \item A smooth submanifold $\calD_{p_1,p_2,r}$ is closed and embedded in $(\real^{p_1\times p_2})_*^r$ with a dimension $p_1p_2r-\binom{p_1+1}{2}-\binom{p_2+1}{2}+1$.
    \item A smooth submanifold $\calC_{p_1,p_2,r}$ is closed and embedded in $\real_*^{p\times r}$ with a dimension $p_1p_2r-\binom{p_1+1}{2}-\binom{p_2+1}{2}+1$. 
    \item A smooth submanifold $\calC_{p_1,p_2,r}^{+}$ is compact and embedded in $\calS_{p,r}^+$ wtih a dimension $p_1p_2r-\binom{r}{2}-\binom{p_1+1}{2}-\binom{p_2+1}{2}+1$. 
\end{itemize}
\end{thm}
\begin{proof}[Sketch of Proof]
 Provided that the first item is true, the last two items directly follow from the argument for the proof of Theorem \ref{sec3.thm1}. The results of Lemma \ref{sec3.lemma1}, \ref{sec3.lemma2}, and Proposition \ref{sec3.prop1} can be developed similarly. For Lemma \ref{sec3.lemma2} with $\calC_{p_1,p_2,r}$ and $\calO_r$ instead of $\calC_{p_1,p_2}$ and $\calO_p$, respectively, we additionally show that $\tilde{X}=\varphi_{p_1,p_2,r}^{-1}(\varphi_{p_1,p_2,r}(X)O)$ is canonically indecomposable for any $X=(X_1,\ldots,X_r)\in \calD_{p_1,p_2,r}$ and $O\in\calO_r$ so that the action $(O,B)\in \calO_r\times \calC_{p_1,p_2,r}\rightarrow BO\in\calC_{p_1,p_2,r}$ is well-defined.  
 
To prove the first item, recall the operator $J$ in (\ref{sec4.2.eq1}). Following the proof of Proposition \ref{sec3.prop1}, the first item can be concluded if $C(J_1(A)^\top)\cap C(J_2(A)^\top)=\set{\bfzero_{pr}}$ for any $A\in\calD_{p_1,p_2,r}$. This can be done similarly to the proof of Proposition \ref{sec3.prop1}, along with the result of Proposition \ref{sec4.1.prop2}. 
\end{proof}

The tangent spaces of $\calC_{p_1,p_2,r}$ and $\calC_{p_1,p_2,r}^+$ follow from the proof of Theorem \ref{sec4.2.thm1}.

\begin{prop}\label{sec4.2.prop1}
    Let $A\in\calD_{p_1,p_2,r}$ and suppose $\tilde{A}=\varphi_{p_1,p_2,r}(A)$. Then 
\begin{align*}
    T_{\tilde{A}}\calC_{p_1,p_2,r}&\equiv \set{B\in\real^{p\times r}:\text{vec}(B)\in N(J(A))},\quad 
    T_{\tilde{A}\tilde{A}^\top}\calC_{p_1,p_2,r}^+\equiv \set{\tilde{A}B^\top+B\tilde{A}^\top:B\in  T_{\tilde{A}}\calC_{p_1,p_2,r}}.
\end{align*}
\end{prop}
\begin{proof}
See Appendix \ref{append.A.4} for the proof.
\end{proof}

\section{Differential geometry of $\calC_{p_1,p_2}^{++}$, $\calC_{p_1,p_2,r}$, and $\calC_{p_1,p_2,r}/\calO_r$}\label{sec5}

\subsection{Diffeomorphic relationship between $\calS_p^{++}$ and $\calS_{p_1,p_2}^{++}\times \calC_{p_1,p_2}^{++}$}\label{sec5.1}
In this section, we prove that $\calS_{p}^{++}$ is diffeomorphic to the product manifold $\calS_{p_1,p_2}^{++}\times \calC_{p_1,p_2}^{++}$ via the map $f:\Sigma\in \calS_{p}^{++}\rightarrow (k(\Sigma),c(\Sigma))\in\calS_{p_1,p_2}^{++}\times \calC_{p_1,p_2}^{++} $ with its inverse $g:(K,C)\in\calS_{p_1,p_2}^{++}\times \calC_{p_1,p_2}^{++}\rightarrow h(K)Ch(K)^\top\in\calS_p^{++}$. Consequently, we provide a new insight into the smooth structure of $\calS_p^{++}$ in terms of the separability. This generalizes the result of Proposition $5$ from \cite{hoff2023} on the homeomorphic relationship between $\calS_{p}^{++}$ and $\calS_{p_1,p_2}^{++}\times \calC_{p_1,p_2}^{++}$. We also calculate the differentials of $f$ and $g$ to examine how the tangent vectors transform via the maps $f$ and $g$.

To prove the diffeomorphic relationship, note that for either choice of the square root $h\in\calS_{p_1,p_2}^{++}$ or $h\in\calL_{p_1,p_2}^{++}$, the map $h$ is smooth. Hence, it is clear that the map $g$ is also smooth. Thus, if the maps $k$ and $c$ are smooth, then we are done as $f$ is also smooth then. To compute the differentials of $h$, $k$ and $c$, and thus $f$ and $g$, note that 
\begin{align*}
    T_{\Sigma_2\otimes \Sigma_1}\calS_{p_1,p_2}^{++}&\equiv \set{U_2\otimes \Sigma_1+\Sigma_2\otimes U_1:U_i\in\calS_{p_i}},\\
     T_{L_2\otimes L_1}\calL_{p_1,p_2}^{++}&\equiv \set{V_2\otimes L_1+L_2\otimes V_1:V_i\in\calL_{p_i}}.
\end{align*}
We provide an ancillary result below.

\begin{lemma}\label{sec5.1.lemma1}
Suppose $K=\Sigma_2\otimes \Sigma_1\in\calS_{p_1,p_2}^{++}$. Let $\Gamma_i\Lambda_i\Gamma_i^\top$ be the eigendecomposition of $\Sigma_i$, where $\Gamma_i\in\calO_{p_i}$ and $\Lambda_i$ is a diagonal matrix with the eigenvalues on its diagonal. Let $L_i=\calL(\Sigma_i)$, and take $U_i\in\calS_{p_i}$ to form $U=U_2\otimes \Sigma_1+\Sigma_2\otimes U_1$. Then the differential of the square root map $h$ is given as follows: if $h\in\calL_{p_1,p_2}^{++}$,
\begin{align*}
   dh(K)[U]=  (L_2\otimes L_1)\parentheses{I_{p_2}\otimes L_1^{-1}U_1L_1^{-\top}+L_2^{-1}U_2L_2^{-\top}\otimes I_{p_1} }_{\frac{1}{2}},
\end{align*}
and if $h\in \calS_{p_1,p_2}^{++}$,
\begin{align*}
    dh(K)[U]=     (\Gamma_2\otimes \Gamma_1)\left[\Lambda^{-}\circ \parentheses{\Lambda_2\otimes \Gamma_1^\top U_1\Gamma_1+\Gamma_2^\top U_2\Gamma_2\otimes \Lambda_1}\right](\Gamma_2\otimes \Gamma_1)^\top.
\end{align*}
Here $\Lambda^{-}$ is an elementrywise inverse of $\Lambda=\mathbf{1}_{p_2}\lambda_2^\top \otimes \mathbf{1}_{p_1}\lambda_1^\top+\lambda_2\mathbf{1}_{p_2}^\top\otimes\lambda_1\mathbf{1}_{p_1}^\top $ for $\lambda_1=\text{vec}(\Lambda_1^{1/2})$ and $\lambda_2=\text{vec}(\Lambda_2^{1/2})$, and $\circ$ denotes the Hadmard product. 
\end{lemma}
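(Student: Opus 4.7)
The plan rests on the uniqueness of each square root. For $K = \Sigma_2 \otimes \Sigma_1$, the Cholesky factor of $K$ must be $L_2 \otimes L_1$ (lower triangular with positive diagonal, and squaring to $K$), and the symmetric positive-definite square root must be $\Sigma_2^{1/2} \otimes \Sigma_1^{1/2}$. Hence $h$ on $\calS_{p_1,p_2}^{++}$ is the restriction of the usual global square-root map on $\calS_p^{++}$, and its differential at $K$ along $U = U_2 \otimes \Sigma_1 + \Sigma_2 \otimes U_1$ can be read off from the well-known formulas for the global differential, followed by a simplification using Kronecker identities.

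For the Cholesky case, I will invoke the standard formula
\begin{align*}
d\calL(K)[U] = L \cdot (L^{-1} U L^{-\top})_{\frac{1}{2}}, \qquad L = \calL(K),
\end{align*}
obtained by differentiating $K = L L^\top$, pre- and post-multiplying by $L^{-1}$ and $L^{-\top}$, and splitting the resulting symmetric matrix into its strictly lower-triangular, diagonal, and strictly upper-triangular parts. Substituting $L = L_2 \otimes L_1$, using $(L_2 \otimes L_1)^{-1} = L_2^{-1} \otimes L_1^{-1}$, and applying $L_i^{-1}\Sigma_i L_i^{-\top} = I_{p_i}$ collapses $L^{-1}UL^{-\top}$ to
\begin{align*}
I_{p_2} \otimes L_1^{-1}U_1L_1^{-\top} + L_2^{-1}U_2 L_2^{-\top} \otimes I_{p_1},
\end{align*}
which is exactly the expression appearing inside $(\cdot)_{\frac{1}{2}}$ in the statement.

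For the symmetric case, I will characterize $X := dh(K)[U]$ as the unique symmetric solution of the Lyapunov equation $X h(K) + h(K) X = U$ (obtained by differentiating $K = h(K)^2$). Since $h(K) = (\Gamma_2 \otimes \Gamma_1)(\Lambda_2^{1/2} \otimes \Lambda_1^{1/2})(\Gamma_2 \otimes \Gamma_1)^\top$, conjugating by $(\Gamma_2 \otimes \Gamma_1)^\top$ diagonalizes $h(K)$ to $D := \Lambda_2^{1/2} \otimes \Lambda_1^{1/2}$ and reduces the equation to the elementwise division $X'_{mn} = U'_{mn}/(D_{mm} + D_{nn})$, where
\begin{align*}
U' := (\Gamma_2 \otimes \Gamma_1)^\top U (\Gamma_2 \otimes \Gamma_1) = \Lambda_2 \otimes \Gamma_1^\top U_1 \Gamma_1 + \Gamma_2^\top U_2 \Gamma_2 \otimes \Lambda_1.
\end{align*}
The remaining step is to identify the denominator matrix with $\Lambda$: writing $m = (i,k)$ and $n = (j,l)$ in the block-lexicographic order induced by $\Gamma_2 \otimes \Gamma_1$, one has $D_{mm} + D_{nn} = \sqrt{\lambda_{2,i}}\sqrt{\lambda_{1,k}} + \sqrt{\lambda_{2,j}}\sqrt{\lambda_{1,l}}$, which is precisely the $(m,n)$ entry of $\lambda_2 \mathbf{1}_{p_2}^\top \otimes \lambda_1 \mathbf{1}_{p_1}^\top + \mathbf{1}_{p_2}\lambda_2^\top \otimes \mathbf{1}_{p_1}\lambda_1^\top$ once $\lambda_i$ is read as the vector of diagonal entries of $\Lambda_i^{1/2}$. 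Transforming $X'$ back by $(\Gamma_2 \otimes \Gamma_1)(\cdot)(\Gamma_2 \otimes \Gamma_1)^\top$ yields the stated formula.

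The main obstacle is purely bookkeeping: cleanly propagating the Kronecker inverse through $L^{-1}UL^{-\top}$ in the Cholesky case, and verifying by explicit indexing that the matrix of eigenvalue-sum denominators coincides with the Kronecker-plus-Kronecker expression $\Lambda$ in the symmetric case. Once uniqueness of the square root has been used to reduce to the global differential, no further conceptual difficulty arises.
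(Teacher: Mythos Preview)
Your proposal is correct and follows essentially the same approach as the paper: both differentiate the identity $h(K)h(K)^\top=K$ to obtain $h(K)R^\top+Rh(K)^\top=U$, then in the Cholesky case exploit lower-triangularity to extract $R$ via the $(\cdot)_{\frac{1}{2}}$ operation (the paper rederives the formula you cite, referencing Lin's log-Cholesky paper), and in the symmetric case diagonalize via $\Gamma_2\otimes\Gamma_1$ and solve the resulting Sylvester equation entrywise. Your identification of the denominator matrix with $\Lambda$ matches the paper's computation exactly.
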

\begin{proof}
    See Appendix \ref{append.A.5} for the proof. 
\end{proof}

The differential of the map $g$ directly follows from the above lemma.
\begin{prop}\label{sec5.1.prop1}
Given $\Sigma\in \calS_{p}^{++}$, let $K=k(\Sigma)=\Sigma_2\otimes \Sigma_1$ and $C=c(\Sigma)$. Suppose $U\in T_{\Sigma_2\otimes \Sigma_1}\calS_{p_1,p_2}^{++}$ and $W\in T_C\calC_{p_1,p_2}^{++}$. For the map $g:(K,C)\in \calS_{p_1,p_2}^{++}\times \calC_{p_1,p_2}^{++}\rightarrow h(K)Ch(K)^\top\in\calS_p^{++}$, the differential is given by
\begin{align*}
    dg(K,C)[U,W]=h(K)Wh(K)^\top+(dh(K)[U])Ch(K)^\top+h(K)C (dh(K)[U])^\top,
\end{align*}
where  $dh(K)[U]$ is given in Lemma \ref{sec5.1.lemma1}. 
\end{prop}
\begin{proof}
    See Appendix \ref{append.A.5} for the proof.
\end{proof}

It remains to show that the maps $k$ and $c$ are smooth, proving that the map $f$ is smooth so that the diffeomorphic relationship holds, and compute their differentials. In some sense, the ambiguity due to a constant factor in identifying the factors of the elements in $\calS_{p_1,p_2}^{++}$ makes the proof complicated, i.e, $\Sigma_2\otimes \Sigma_1=(c\Sigma_2)\otimes (\Sigma_1/c)$ for any $c>0$. To avoid this ambiguity, we introduce the orthogonal parameterization of $\calS_{p_1,p_2}^{++}$ under $g^{\AI}$ \cite{mccormack2025,simonis2025,cox1987}. Specifically, suppose $\calE:=\bbP(\calS_{p_1}^{++})\times \calS_{p_2}^{++}$. Define a diffeomorphism 
\begin{align*}
    \psi_{p_1,p_2}:\Sigma_2\otimes \Sigma_1\in\calS_{p_1,p_2}^{++}\rightarrow (\Sigma_1,\Sigma_2)\in \calE
\end{align*}
where $|\Sigma_1|=1$. As studied by \cite{mccormack2025,simonis2025}, if $\calS_{p_1,p_2}^{++}$ is equipped with $g^{\AI}$, the induced metric $\tilde{g}_{\Sigma_2\otimes\Sigma_1}^{\AI}=\tilde{g}_1^\AI\oplus \tilde{g}_2^{\AI}$ on $\calE$ by $\psi_{p_1,p_2}$ via pullback geometry is given by $\tilde{g}_i^{\AI}=g_{\Sigma_i}^{\AI}/p_{-i}$ for $p_{-1}=p_2$ and $p_{-2}=p_1$. Then for the Kronecker map $k$, let $\eta_{p_1,p_2}:=\psi_{p_1,p_2}\circ k$. Since $k:=\psi_{p_1,p_2}^{-1}\circ \eta_{p_1,p_2}$, if $\eta_{p_1,p_2}$ is smooth, so is $k$. Also, the chain rule implies that 
\begin{align}\label{sec5.1.eq1}
    dk(\Sigma)[V]=d\psi_{p_1,p_2}^{-1}(k(\Sigma))[d\eta_{p_1,p_2}(\Sigma)[V]]
\end{align}
for $V\in T_\Sigma \calS_{p}^{++}$. Note that $\eta_{p_1,p_2}$ maps $\Sigma\in\calS_{p_1,p_2}^{++}$ to a unique minimizer of function $d$ in (\ref{sec2.1.eq1}) over $\calE$ by identifying $K_2\otimes K_1$ in (\ref{sec2.1.eq1}) via the map $\psi_{p_1,p_2}$. Also, 
\begin{align}\label{sec5.1.eq2}
    d\psi_{p_1,p_2}^{-1}(\Sigma_1,\Sigma_2):(U_1,U_2)\in T_{(\Sigma_1,\Sigma_2)}\calE\rightarrow U_2\otimes \Sigma_1+\Sigma_2\otimes U_1\in T_{\Sigma_2\otimes \Sigma_1} \calS_{p_1,p_2}^{++}.
\end{align}
After identifying $d\eta_{p_1,p_2}(\Sigma)[V]$ via the manifold  implicit function theorem (\cite{loomis2014}, Section $3.11$), we use (\ref{sec5.1.eq1})--(\ref{sec5.1.eq2}) to obtain $dk(\Sigma)[V]$ and so $dc(\Sigma)[V]$ using Lemma \ref{sec5.1.lemma1}. 

\begin{prop}\label{sec5.1.prop2}
The Kronecker map $k:\calS_p^{++}\rightarrow \calS_{p_1,p_2}^{++}$ is smooth. Consequently, the map $f:\Sigma\in \calS_p^{++}\rightarrow (k(\Sigma),c(\Sigma))\in \calS_{p_1,p_2}^{++}\times \calC_{p_1,p_2}^{++}$ is so for either $h\in\calS_{p_1,p_2}^{++}$ or $h\in\calC_{p_1,p_2}^{++}$. Therefore, $ \calS_p^{++}$ is diffeomorphic to $\calS_{p_1,p_2}^{++}\times \calC_{p_1,p_2}^{++}$ as the map $g$ in Proposition \ref{sec5.1.prop1} is also smooth. Moreover, let $k(\Sigma)=\Sigma_2\otimes\Sigma_1$ with $|\Sigma_1|=1$, $c(\Sigma)=C$, and $V\in T_\Sigma \calS_p^{++}\equiv \calS_p$.  Also, define the bilinear operator $\calR_C:T_{(\Sigma_1,\Sigma_2)}\calE\rightarrow T_{(\Sigma_1,\Sigma_2)}\calE$ by 
\begin{align*}
    \calR_C(U_1,U_2)&=\left(U_1+\Sigma_1^{1/2}M_1\Sigma_1^{1/2}/p_2-\tr\parentheses{\Sigma_2^{-1/2}U_2\Sigma_2^{-1/2}}\Sigma_1/p_2,\; U_2+\Sigma_2^{1/2}M_2\Sigma_2^{1/2}/p_1 \right)
\end{align*}
where $M_1\in \calS_{p_1}$ and $M_2\in\calS_{p_2}$ are given by 
\begin{align*}
    M_1&=\sum_{i,j=1}^{p_2}(\Sigma_2^{-1/2}U_2\Sigma_2^{-1/2})_{ij}C_{[j,i]},\quad (M_2)_{ij}=\tr\parentheses{C_{[i,j]}\Sigma_1^{-1/2}U_1\Sigma_1^{-1/2}}.
\end{align*}
Here $\Sigma_i^{1/2}\in\calS_{p_i}^{++}$. Then the operator $\calR_C$ is a bijection. Furthermore,  the differential of $k$ at $\Sigma$ is given by
\begin{align*}
    dk(\Sigma)[V]=U_2\otimes\Sigma_1+\Sigma_2\otimes U_1,
\end{align*}
where $(U_1,U_2)$ is a unique solution to the equation that
\begin{align*}
    \calR_C(U_1,U_2)&=\parentheses{\Sigma_1^{1/2}\left[\tr_1(\tilde{V})-\tr(\tilde{V})/p_1 I_{p_1}\right]\Sigma_1^{1/2}/p_2,\Sigma_2^{1/2}\tr_2(\tilde{V})\Sigma_2^{1/2}/p_1)}\\
    &=(M_1,M_2)
\end{align*}
for $\tilde{V}:=K^{-1/2}VK^{-1/2}$ with symmetric $K^{1/2}\equiv \Sigma_2^{1/2}\otimes\Sigma_1^{1/2}$. Also,
\begin{align*}
    dc(\Sigma)[V]&=h(k(\Sigma))^{-1}Vh(k(\Sigma))^{-\top}-h(k(\Sigma))^{-1}(dh(k(\Sigma))[dk(\Sigma)[V]])C\\
    &-C(dh(k(\Sigma))[dk(\Sigma)[V]])^\top h(k(\Sigma))^{-\top},
\end{align*}
From the differentials computed above, we consequently have that
\begin{align*}
    df(\Sigma)[V]=(dk(\Sigma)[V],dc(\Sigma)[V]).
\end{align*}
In particular, if $C=I_p$, i.e., $\Sigma\equiv k(\Sigma)=\Sigma_2\otimes\Sigma_1$, $\calR_C$ reduces to an identity operator so that $(U_1,U_2)=(M_1,M_2)$, in which
\begin{align*}
    dk(\Sigma)[V]&=\parentheses{\Sigma_2^{1/2}\tr_2(\tilde{V})\Sigma_2^{1/2}}\otimes\Sigma_1/p_1+\Sigma_2\otimes\parentheses{\Sigma_1^{1/2}\tr_1(\tilde{V})\Sigma_1^{1/2}}/p_2-\frac{\tr\parentheses{\tilde{V}}}{p}\Sigma,\\
    dc(\Sigma)[V]&=h(k(\Sigma))^{-1}Vh(k(\Sigma))^{-\top}-\tilde{R}-\tilde{R}^\top.
\end{align*}
Here if $h\in\calL_{p_1,p_2}^{++}$, 
\begin{align*}
    \tilde{R}=\parentheses{I_{p_2}\otimes L_1^{-1}U_1L_1^{-\top}+L_2^{-1}U_2L_2^{-\top}\otimes I_{p_1}}_{\frac{1}{2}}.
\end{align*}
Otherwise, if $h\in\calS_{p_1,p_2}^{++}$,
\begin{align*}
    \tilde{R}=
        (\Gamma_2\otimes\Gamma_1)(\Lambda_2^{-1/2}\otimes\Lambda_1^{-1/2})\left[\Lambda^{-}\circ \parentheses{\Lambda_2\otimes \Gamma_1^\top U_1\Gamma_1+\Gamma_2^\top U_2\Gamma_2\otimes \Lambda_1}\right](\Gamma_2\otimes\Gamma_1)^\top.
\end{align*}
where the quantities associated with $\tilde{R}$ are those defined in Lemma \ref{sec5.1.lemma1}.
\end{prop}
\begin{proof}
    See Appendix \ref{append.A.5} for the proof.
\end{proof}

\subsection{Riemannian gradient and Hessian operator on $\calC_{p_1,p_2}^{++}$}\label{sec5.2}
Endow $\calC_{p_1,p_2}^{++}$ with the Euclidean metric $g^E$. By Proposition \ref{sec3.prop2}, the form of the tangent space $T_C\calC_{p_1,p_2}^{++}$ does not depend on $C\in\calC_{p_1,p_2}^{++}$. The same holds for the form of the metric, i.e., $g_C^E(U,V)=\tr\parentheses{U^\top V}$ for $U,V\in T_C\calC_{p_1,p_2}^{++}$. Thus, letting $\mathcal{W}:=T_C\calC_{p_1,p_2}^{++}$, which does not depend on $C$, $\mathcal{W}$ is a linear subspace of $\calS_p$. Also, with any fixed basis of $\mathcal{W}$ as a coordinate on each tangent space, we have that the Christoffel symbols (see $(5.10)$ of \cite{lee2018}) vanish on that coordinate. Thus, $(\calC_{p_1,p_2}^{++},g^E)$ has a zero-sectional curvature, and so $(\calC_{p_1,p_2}^{++},g^E)$ is flat (\cite{lee2018}, Theorem $7.10$). Now we derive the Riemannian gradient and Hessian operator on $(\calC_{p_1,p_2}^{++},g^E)$. For a scalar-valued smooth map $f$ on $\calC_{p_1,p_2}^{++}$, denote the Euclidean derivative and Hessian operator of $f$ by $\nabla f(C)$ and $\nabla^2 f(C)[V]$ for $C\in\calC_{p_1,p_2}^{++}$ and $V\in T_C\calC_{p_1,p_2}^{++}$.

\begin{lemma}\label{sec5.2.lemma1}
For $C\in\calC_{p_1,p_2}^{++}$, let $W\in T_C\calC_{p_1,p_2}^{++}$. The operator $\calG:\calS_p\rightarrow \calS_p$ given by 
\begin{align}\label{sec5.2.lemma1.eq1}
    \calG(V):=V-\frac{1}{p_2}(I_{p_2}\otimes \tr_1(V))-\frac{1}{p_1}(\tr_2(V)\otimes I_{p_1})+\frac{\tr(V)}{p} I_p
\end{align}
is an orthogonal projection of $V\in T_C\calS_p^{++}\equiv \calS_p$ onto $T_C\calC_{p_1,p_2}^{++}$.
\end{lemma}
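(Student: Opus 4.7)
The plan is to establish $\calG$ as an orthogonal projection onto $T_C\calC_{p_1,p_2}^{++}$ by verifying its two defining conditions: (a) $\calG(V)\in T_C\calC_{p_1,p_2}^{++}$ for every $V\in\calS_p$, and (b) $V-\calG(V)$ is orthogonal under $g^E$ to every element of $T_C\calC_{p_1,p_2}^{++}$. Recall from Proposition \ref{sec3.prop2} that
\begin{align*}
T_C\calC_{p_1,p_2}^{++}=\set{W\in\calS_p:\tr_1(W)=\mathbf{0}_{p_1\times p_1},\ \tr_2(W)=\mathbf{0}_{p_2\times p_2}},
\end{align*}
which is independent of the basepoint $C$, consistently with the fact that the formula for $\calG$ in (\ref{sec5.2.lemma1.eq1}) does not depend on $C$.

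First I would verify membership in the tangent space. Symmetry of $\calG(V)$ is immediate, since $V$, $\tr_1(V)$, $\tr_2(V)$, and $I_p$ are all symmetric. For the partial-trace constraints, the key tools are the elementary identities $\tr_1(A\otimes B)=\tr(A)\,B$ and $\tr_2(A\otimes B)=\tr(B)\,A$ for $A\in\real^{p_2\times p_2}$, $B\in\real^{p_1\times p_1}$, which follow directly from the block structure $(A\otimes B)_{[i,j]}=a_{ij}B$. Applied to each term of $\calG(V)$ these give $\tr_1(I_{p_2}\otimes\tr_1(V))=p_2\tr_1(V)$, $\tr_1(\tr_2(V)\otimes I_{p_1})=\tr(V)I_{p_1}$, and $\tr_1(I_p)=p_2I_{p_1}$; substituting into (\ref{sec5.2.lemma1.eq1}) and collecting terms yields $\tr_1(\calG(V))=\mathbf{0}_{p_1\times p_1}$. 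The $\tr_2$-computation is entirely analogous and gives $\tr_2(\calG(V))=\mathbf{0}_{p_2\times p_2}$.

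For orthogonality, I would take any $W\in T_C\calC_{p_1,p_2}^{++}$ partitioned as $(W_{[i,j]})$ and expand $g^E_C(V-\calG(V),W)=\tr((V-\calG(V))W)$ termwise on the three correction pieces of $V-\calG(V)$. Using the block-trace formula $\tr(NW)=\sum_{i,j}\tr(N_{[i,j]}W_{[j,i]})$, the diagonal block structure of $I_{p_2}\otimes\tr_1(V)$ gives $\tr((I_{p_2}\otimes\tr_1(V))W)=\tr(\tr_1(V)\tr_1(W))=0$; the block structure $(\tr_2(V)\otimes I_{p_1})_{[i,j]}=(\tr_2(V))_{ij}I_{p_1}$ gives $\tr((\tr_2(V)\otimes I_{p_1})W)=\tr(\tr_2(V)\tr_2(W))=0$; and $\tr(I_pW)=\tr(\tr_1(W))=0$. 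Hence $V-\calG(V)$ lies in the Euclidean orthogonal complement of $T_C\calC_{p_1,p_2}^{++}$, completing the verification.

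The main obstacle is purely bookkeeping: applying the partial-trace identities for Kronecker products and the block-trace formula so that each of the three correction terms in $\calG(V)$ contributes exactly the cancellation needed to kill $\tr_1(V)$ and $\tr_2(V)$ in (a), and vanishes against $W$ in (b). No deeper machinery is needed; linearity of $\calG$ together with (a) and (b) identifies it uniquely as the orthogonal projection of $\calS_p$ onto $T_C\calC_{p_1,p_2}^{++}$.
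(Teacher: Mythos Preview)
Your proposal is correct and follows essentially the same approach as the paper: verify that $\calG(V)$ lands in the tangent space by computing $\tr_1(\calG(V))$ and $\tr_2(\calG(V))$, then check that $V-\calG(V)$ is Euclidean-orthogonal to every $W$ with vanishing partial traces. The only cosmetic difference is that the paper packages the identities $\tr((I_{p_2}\otimes A)W)=\tr(\tr_1(W)A)$ and $\tr((B\otimes I_{p_1})W)=\tr(\tr_2(W)B)$ into a separate lemma, whereas you derive them directly from the block structure.
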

\begin{proof}
        See Appendix \ref{append.A.6} for the proof.
\end{proof}

\begin{prop}\label{sec5.2.prop1}
Suppose $f$ is a scalar-valued smooth map on $(\calC_{p_1,p_2}^{++},g^E)$. Let $C\in \calC_{p_1,p_2}^{++}$ and $V\in T_C\calC_{p_1,p_2}^{++}$. For the operator $\calG$ defined in (\ref{sec5.2.lemma1.eq1}), 
\end{prop}
\begin{align*}
    \grad f(C)=\calG(\nabla f(C)),\quad \Hess f(C)[V]=\calG(\nabla^2 f(C)[V]).
\end{align*}
\begin{proof}
      See Appendix \ref{append.A.6} for the proof.  
\end{proof}

\subsection{Riemannian gradient and Hessian operator on $\calC_{p_1,p_2,r}$ and  $\calC_{p_1,p_2,r}/\calO_r$}\label{sec5.3}
We derive the Riemannian gradient and Hessian operator on $(\calC_{p_1,p_2,r},g^E)$ and then deduce those on $(\calC_{p_1,p_2,r}/\calO_r,g^{E,0})$ via quotient geometry. Here $g^{E,0}$ is the quotient metric induced by $g^E$. Throughout this section, we denote the Moore-Penrose pseudoinverse of a matrix $M$ by $M^\dagger$ (see p.50--51 of \cite{rao1971}). We establish the Riemannian gradient and Hessian operator on $(\calC_{p_1,p_2,r},g^E)$ as follows. 

\begin{lemma}\label{sec5.3.lemma1}
Recall the linear operator $J$ in (\ref{sec4.2.eq1}). Let $A\in\calC_{p_1,p_2,r}$ and endow $\calC_{p_1,p_2,r}$ with metric $g^E$. Suppose $\tilde{A}=\varphi_{p_1,p_2,r}^{-1}(A)$. Then for any $V\in T_A\real_*^{p\times r}$, if $W$ is the orthogonal projection of $V$ onto $T_A\calC_{p_1,p_2,r}$,
\begin{align*}
    \text{vec}(W)=(I-J(\tilde{A})^\dagger J(\tilde{A}))\text{vec}(V).
\end{align*}
\end{lemma}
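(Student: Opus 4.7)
The plan is to reduce the claim to a well-known linear-algebra fact about the Moore–Penrose pseudoinverse, after transferring everything to vectorized form. First, since $\real_*^{p\times r}$ is an open submanifold of $\real^{p\times r}$, we have $T_A\real_*^{p\times r}\equiv\real^{p\times r}$, and the Euclidean metric $g^E$ on matrices corresponds under $\text{vec}$ to the standard inner product on $\real^{pr}$. Thus the orthogonal projection in question is computed in $\real^{pr}$ with respect to the usual Euclidean structure.

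Next, I would invoke Proposition \ref{sec4.2.prop1}, which identifies
\begin{align*}
T_A\calC_{p_1,p_2,r}\;\equiv\;\{B\in\real^{p\times r}:\text{vec}(B)\in N(J(\tilde{A}))\}.
\end{align*}
Hence, via $\text{vec}$, the subspace onto which we project is exactly $N(J(\tilde{A}))\subset\real^{pr}$. The question therefore becomes: what is the orthogonal projector onto $N(M)$ for a fixed real matrix $M:=J(\tilde{A})$?

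This is a standard property of the Moore–Penrose pseudoinverse (see, e.g., \cite{rao1971}, pp.~50--51): $M^\dagger M$ is the orthogonal projector onto $R(M^\top)$, and since $R(M^\top)=N(M)^\perp$, its complement $I-M^\dagger M$ is the orthogonal projector onto $N(M)$. Applying this with $M=J(\tilde{A})$ immediately yields $\text{vec}(W)=(I-J(\tilde{A})^\dagger J(\tilde{A}))\,\text{vec}(V)$, as required.

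There is no substantive obstacle; the lemma is essentially bookkeeping that combines the tangent-space characterization from Proposition \ref{sec4.2.prop1} with the defining property of $M^\dagger M$ as an orthogonal projector. The only small point to verify is that $\text{vec}$ is an isometry between $(\real^{p\times r},g^E)$ and $(\real^{pr},\langle\cdot,\cdot\rangle)$, which is clear from $g^E_A(U,V)=\tr(U^\top V)=\text{vec}(U)^\top\text{vec}(V)$, so orthogonality computed on either side agrees.
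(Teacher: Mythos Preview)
Your proposal is correct and follows essentially the same approach as the paper: both identify $T_A\calC_{p_1,p_2,r}$ with $N(J(\tilde{A}))$ via vectorization (you cite Proposition~\ref{sec4.2.prop1}, the paper cites the proof of Theorem~\ref{sec4.2.thm1}, which is the same content) and then use the standard fact that $I-M^\dagger M$ is the orthogonal projector onto $N(M)$. The paper additionally writes out the inner-product identity $g^E(V,U)=\text{vec}(V)^\top\text{vec}(U)$ explicitly, which is exactly your remark that $\text{vec}$ is an isometry.
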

\begin{proof}
          See Appendix \ref{append.A.7} for the proof.  
\end{proof}

\begin{prop}\label{sec5.3.prop1}
Recall the linear operator $J$ in (\ref{sec4.2.eq1}). Let $f$ be a scalar-valued smooth function on $(\calC_{p_1,p_2,r},g^E)$. Let $A\in\calC_{p_1,p_2,r}$ (resp. $V\in T_A\calC_{p_1,p_2,r}$) and suppose $\tilde{A}:=\varphi_{p_1,p_2,r}^{-1}(A)$ (resp. $\tilde{V}:=\varphi_{p_1,p_2,r}^{-1}(V)$). Then,
\begin{align*}
    \text{vec}(\grad f(A))&=(I-J(\tilde{A})^\dagger J(\tilde{A}))\text{vec}(\nabla f(A)),\\
    \text{vec}(\Hess f(A)[V])&=(I-J(\tilde{A})^\dagger J(\tilde{A}))\text{vec}(\nabla^2 f(A)[V])\\
    &-(I-J(\tilde{A})^\dagger J(\tilde{A}))J(\tilde{V})^\top (J(\tilde{A})^\dagger)^\top J(\tilde{A})^\dagger J(\tilde{A})\text{vec}(\nabla f(A)).
\end{align*}
\end{prop}
\begin{proof}
     See Appendix \ref{append.A.7} for the proof.  
\end{proof}

The Riemannian gradient and Hessian operator on $(\calC_{p_1,p_2,r}/\calO_r,g^{E,0})$ can be derived using the results of \cite{absil2008,massart2020} (see Section $5.1$ and $5.6$ of \cite{chen2025} for example). For $A\in \calC_{p_1,p_2,r}$, the vertical and horizontal spaces at $A$ are given by
\begin{align*}
    \calV_A&=\set{A\Theta:\Theta\in\textsf{Skew}_r},\quad \calH_A\equiv \calV_A^\perp =\set{B\in T_A\calC_{p_1,p_2,r}:A^\top B=B^\top A}.
\end{align*}
Note that $T_A\calC_{p_1,p_2,r}=\calV_A\oplus \calH_A$. Then we introduce the operators on $T_A\calC_{p_1,p_2,r}$ by \cite{massart2020} as 
\begin{align}\label{sec5.2.eq1}
    P_A^v(W):=A\mathbf{T}_{A^\top A}^{-1}(2\textsf{skew}(A^\top W)),\quad P_A^h(W):=W- P_A^v(W).
\end{align}
for $W\in T_A\calC_{p_1,p_2,r}$. Here the operator $\mathbf{T}_{E}^{-1}(\cdot)$ is the inverse of the map $\mathbf{T}_E:Y\in \real^{r\times r}\rightarrow YE+EY\in \real^{r\times r}$. If $E\in\calS_r^{++}$, $\mathbf{T}_E$ is invertible and the value of its inverse $\mathbf{T}_{E}^{-1}(V)$ for given $V$ is a unique solution to the Sylvester equation $YE+EY=V$ (\cite{massart2020}, Lemma A.10). Also, by Section $5.3.4$ of \cite{absil2008}, the Riemannian connection $\nabla$ on $\calC_{p_1,p_2,r}/\calO_r$ satisfies that 
\begin{align*}
    (\nabla_\eta \xi)_A^\#=P_A^h ((\nabla_{\eta^\#}^\# \xi^\#)_A)
\end{align*}
for any $A\in\calC_{p_1,p_2,r}$, vector fields $\eta,\xi$ on $\calC_{p_1,p_2,r}/\calO_r$ and the operator $P_A^h$ defined in (\ref{sec5.2.eq1}). Note that $\eta^\#$ and $\xi^\#$ are horizontal lifts of $\eta$ and $\xi$, respectively. As a direct consequence of Section $3.6.2$ and $5$ of \cite{absil2008} and Proposition A.14 of \cite{massart2020}, we have the following results.   

\begin{prop}\label{sec5.3.prop2}
    Suppose $f$ is a smooth map on $\calC_{p_1,p_2,r}/\calO_r$, and let $f^\#=f\circ \pi$ for the canonical projection $\pi:X\in \calC_{p_1,p_2,r}\rightarrow [X]\in \calC_{p_1,p_2,r}/\calO_r$. Then for any $A\in \calC_{p_1,p_2,r}$, the Riemannian gradient of $f$ satisfies  
    \begin{align*}
        (\grad f([A]))_A^\#=\grad f^\# (A),
    \end{align*}
Also, the Riemannian Hessian operator of $f$ satisfies
\begin{align*}
    (\Hess f([A])[\xi_{[A]}])_A^\#=P_A^h((\nabla_{\xi^\#}^{\#}\grad f^\#)_A)
\end{align*}
for any vector field $\xi$ on $\calC_{p_1,p_2,r}/\calO_r$.
\end{prop}

\section{Partial isotropy core shrinkage estimator}\label{sec6}
Using the geometry of $\calC_{p_1,p_2,r}$, we shall propose a shrinkage estimator that shrinks the low-dimensional core toward the trivial core, $I_p$. Suppose $Y_1,\ldots,Y_n\overset{i.i.d.}{\sim}N_{p_1\times p_2}(0,\Sigma)$ and let $K^{1/2}CK^{1/2,\top}$ be a KCD of $\Sigma$. In the estimation of $\Sigma$, note that the dimension of the space where $K=k(\Sigma)$ is living is $O(p_1^2+p_2^2)=o(p^2)$, whereas that of the space where $C=c(\Sigma)$ is living is $O(p^2)$ by Theorem \ref{sec3.thm1}. Thus, the difficulty of the estimation arises mainly from estimating $C$, particularly in a high-dimensional regime where $p>n$. 

As a remedy, we consider a partial-isotropy structure on $C$ to introduce a low-dimensional structure to $C$ as discussed in Section \ref{sec1} and \ref{sec2.1}. By Proposition \ref{sec2.1.prop1}, $C=(1-\lambda)AA^\top+\lambda I_p$ for some $\lambda\in(0,1)$ and $A\in\calC_{p_1,p_2,r}$, with $r>p_1/p_2+p_2/p_1$. Thus, $\Sigma=K^{1/2}((1-\lambda)AA^\top+\lambda I_p)K^{1/2,\top}$, leading to the partial-isotropy core covariance model in (\ref{sec1.eq2}). Namely,
\begin{align}\label{sec6.eq1}
    Y_1,\ldots,Y_n\overset{i.i.d.}{\sim}N_{p_1\times p_2}(0, K^{1/2}((1-\lambda)AA^\top+\lambda I_p)K^{1/2,\top}).
\end{align}
Note that $\lambda$ denotes the shrinkage amount of the low-dimensional covariance $K^{1/2}AA^\top K^{1/2,\top}$ toward the separable part $K:=K^{1/2}K^{1/2,\top}$. Hence, $\lambda$ quantifies an effective departure from the separability assumption on $\Sigma$, i.e., $\Sigma=K$, where the correlation structure of $\Sigma$ may be too simplified as $p$ grows. 

For the identifiability of the covariance model above, recall that there is an ambiguity in identifying the factors of $K^{1/2}:=K_2\otimes K_1$ due to a constant factor. To avoid this ambiguity, we shall reparameterize $K^{1/2}$ by $K^{1/2}:=\nu (\bar{K}_2\otimes\bar{K}_1)$, where $\bar{K}_i\in\bbP(\calS_{p_i}^{++})$ or $\bbP(\calL_{p_i}^{++})$, and $\nu>0$. Under this parameterization, the parameters constituting the partial-isotropy core covariance are identifiable with $A$ up to right-rotation. 
\begin{prop}\label{sec6.prop1}
Given $p_1/p_2+p_2/p_1<r< p$, define the parameter space 
\begin{align*}
    \Theta:=\bbP(\calM_1)\times \bbP(\calM_2)\times \real_+\times \calC_{p_1,p_2,r}\times (0,1),
\end{align*}
where $(\calM_1,\calM_2)$ is either $(\calL_{p_1}^{++},\calL_{p_2}^{++})$ or $(\calS_{p_1}^{++},\calS_{p_2}^{++})$. Suppose a smooth map $\Omega:\Theta\rightarrow\calS_p^{++}$ is defined by 
\begin{align*}
    \Omega(\tau)=K^{1/2}((1-\lambda)AA^\top+\lambda I_p)K^{1/2,\top},
\end{align*}
where $K^{1/2}=\nu(\bar{K}_2\otimes \bar{K}_1)$ and $\tau=(\bar{K}_1,\bar{K}_2,\nu,A,\lambda)$. For $\tau_i=(\bar{K}_1^i,\bar{K}_2^i,\nu^i,A^i,\lambda^i)\in \Theta$ with $i=1,2$,
$\Omega(\tau_1)=\Omega(\tau_2)$ if and only if for some $O\in\calO_r$,
\begin{align*}
    &\bar{K}_1^1=\bar{K}_1^2,\quad \bar{K}_2^1=\bar{K}_2^2,\quad \nu^1=\nu^2,\quad A^1=A^2O,\quad \lambda^1=\lambda^2.
\end{align*}
\end{prop}
\begin{proof}
    See Appendix \ref{append.A.7} for the proof.
\end{proof}

 Now we propose a partial-isotropy core shrinkage estimator ($\PICSE$) by $\hat{K}^{1/2}((1-\hat{\lambda})\hat{A}\hat{A}^\top+\hat{\lambda})\hat{K}^{1/2,\top}$, where $\hat{\theta}$ is a MLE of the parameter $\theta$. We shall consider both square roots $K^{1/2}\in\calS_{p_1,p_2}^{++}$ or $\calL_{p_1,p_2}^{++}$ in the estimation. Given the data $Y_1,\ldots,Y_n$ according to the model in (\ref{sec6.eq1}), the negative log-likelihood is given by 
\begin{align}\label{sec6.eq2}
\begin{split}
\ell(\bar{K}_1,\bar{K}_2,\nu,A,\lambda)&:=\tr\parentheses{\bar{K}^{-1/2}S\bar{K}^{-1/2,\top}((1-\lambda)AA^\top+\lambda I_p)^{-1}}/\nu^2\\
    &+\log|(1-\lambda)AA^\top+\lambda I_p|+2p\log \nu.
\end{split}
\end{align}
where $S=1/n\sum_{i=1}^n y_iy_i^\top$ is a sample covariance matrix of $Y_1,\ldots, Y_n$ with $y_i=\text{vec}(Y_i)$, and $\bar{K}^{1/2}=\bar{K}_{2}\otimes \bar{K}_1$. We shall minimize $\ell$ in (\ref{sec6.eq2}). For the minimizer of $\ell$, denoted $\hat{\tau}=(\hat{\bar{K}}_1,\hat{\bar{K}}_2,\hat{\nu},\hat{A},\hat{\lambda})$, \texttt{PICSE} is defined to be 
\begin{align}\label{sec6.eq3}
    \hat{\Sigma}_{\texttt{PICSE}}:=\hat{\nu}^2(\hat{\bar{K}}_2\otimes \hat{\bar{K}}_1)((1-\hat{\lambda})\hat{A}\hat{A}^\top+\hat{\lambda}I_p)(\hat{\bar{K}}_2\otimes \hat{\bar{K}}_1)^\top. 
\end{align}
Since there is no closed form for $\hat{\tau}$, we propose an alternating minimization approach to compute $\hat{\tau}$, and thus $\hat{\Sigma}_{\PICSE}$. Specifically, at $t$-th iteration, we sequentially update each parameter fixing all other parameters as  
\begin{align}\label{sec6.eq4}
\begin{split}
    \bar{K}_1^{(t)}&:=\argmin_{\bar{K}_1\in\bbP(\calM_1)}\ell (\bar{K}_1,\bar{K}_2^{(t-1)},\nu^{(t-1)},A^{(t-1)},\lambda^{(t-1)}),\\  
\bar{K}_2^{(t)}&:=\argmin_{\bar{K}_2\in\bbP(\calM_2)}\ell (\bar{K}_1^{(t)},\bar{K}_2,\nu^{(t-1)},A^{(t-1)},\lambda^{(t-1)}),\\    
\nu^{(t)}&:=\argmin_{\nu\in \real_{+}}\ell (\bar{K}_1^{(t)},\bar{K}_2^{(t)},\nu,A^{(t-1)},\lambda^{(t-1)}),\\  
A^{(t)}&:=\argmin_{A\in\calC_{p_1,p_2,r}}\ell (\bar{K}_1^{(t)},\bar{K}_2^{(t)},\nu^{(t)},A,\lambda^{(t-1)}),\\
\lambda^{(t)}&:=\argmin_{\lambda\in(0,1)}\ell (\bar{K}_1^{(t)},\bar{K}_2^{(t)},\nu^{(t)},A^{(t)},\lambda).
\end{split}
\end{align}
given the initialization $(\bar{K}_1^{(0)},\bar{K}_2^{(0)},\nu^{(0)},A^{(0)},\lambda^{(0)})$. Here $\calM_i=\calS_{p_i}^{++}$ (resp. $\calL_{p_i}^{++}$) if $K^{1/2}\in\calS_{p_1,p_2}^{++}$ (resp. $\calL_{p_1,p_2}^{++}$). We iterate (\ref{sec6.eq4}) until the convergence, and obtain the estimate $\hat{\Sigma}_{\PICSE}$ by plugging the output for each parameter into (\ref{sec6.eq3}). 

We discuss the update rule for each parameter in (\ref{sec6.eq4}). Note that in the sequels, the core component of some positive semi-definite matrix is defined by whitening it through the square root of its separable component as the same type of $K^{1/2}$ in (\ref{sec6.eq1}). Except for $\nu$ and $\lambda$, we adopt second-order Riemannian optimization to update the parameter. Namely, suppose $\theta\in\set{\bar{K}_1,\bar{K}_2,A}$ and let $(\calM,g)$ be Riemannian manifold on which $\theta$ is living. If $\theta$ is either $\bar{K}_1$ or $\bar{K}_2$, we take $g=g^{\text{AI}}$ (resp. $g=g^{\text{Chol}}$) if $\calM=\bbP(\calS_{p_i}^{++})$ (resp. $\calM=\bbP(\calL_{p_i}^{++})$). For $\theta=A$, $g=g^{E}$, i.e., Euclidean metric. Suppose $V\in T_\theta\calM$. Fixing all the parameters other than $\theta$, we obtain the optimal direction $V$ to update $\theta^{(t-1)}$ by solving the equation in $V$ that
\begin{align}\label{sec6.eq5}
    \Hess\ell(\theta^{(t-1)})[V]=-\grad\ell(\theta^{(t-1)}),
\end{align}
which leads to
\begin{align*}
    \bar{V}=-\Hess\ell(\theta^{(t-1)})^\dagger [\grad\ell(\theta^{(t-1)})].
\end{align*}
When obtaining $\bar{V}$ according to the above, we also need Euclidean derivative and Hessian operator of $\ell$ in $\theta$, as the Riemannian gradient and Hessian operator depend on them (see Section \ref{sec2.2.1}--\ref{sec2.2.2} and Section \ref{sec5.3}). We provide their formulas in Appendix \ref{append.B}. If $\theta$ is either $\bar{K}_1$ or $\bar{K}_2$, we obtain 
\begin{align}\label{sec6.eq6}
    \bar{K}_i^{(t)}=\Exp_{\bar{K}_i^{(t-1)}}(\bar{V})
\end{align}
for the solution $\bar{V}$ of (\ref{sec6.eq5}). On the other hand, to obtain $A^{(t)}$, suppose $\bar{V}$ is the solution of (\ref{sec6.eq5}) with $\theta=A$. For $D^{(t-1)}:=A^{(t-1)}A^{(t-1),\top}+A^{(t-1)}\bar{V}^{(t-1),\top}+\bar{V}^{(t-1)}A^{(t-1),\top}$, let $\bar{D}^{(t-1)}$ be its core component. Suppose $\Gamma^{(t-1)}\in\real^{p\times r}$ is the matrix of top$-r$ eigenvectors of $\bar{D}^{(t-1)}$ and $\Lambda^{(t-1)}=\diag\parentheses{\sqrt{\lambda_1(\bar{D}^{(t-1)})},\ldots,\sqrt{\lambda_r(\bar{D}^{(t-1)})}}$. Then we have an update $A^{(t)}$ as 
\begin{align}\label{sec6.eq7}
 A^{(t)}=\Gamma^{(t-1)}\Lambda^{(t-1)}.
\end{align}
To obtain $\nu^{(t)}$ and $\lambda^{(t)}$, note that the closed form of $\nu^{(t)}$ is available as 
\begin{align}\label{sec6.eq8}
    \nu^{(t)}=\sqrt{\frac{\tr\parentheses{(\bar{K}^{(t)})^{-1}S(\bar{K}^{(t)})^{-\top}((1-\lambda^{(t-1)})A^{(t-1)}(A^{(t-1)})^\top+\lambda^{(t-1)} I_p)^{-1}}}{p}},
\end{align}
where $\bar{K}^{(t)}=\bar{K}_2^{(t)}\otimes\bar{K}_1^{(t)}$. We numerically obtain $\lambda^{(t)}$ using \texttt{R} function \texttt{optimize}. 

Now to discuss the initialization, suppose $\tilde{K}$ and $\tilde{C}$ are the separable and core components of $S$. Let $\tilde{C}_r$ be the core component of the best rank$-r$ approximation of $\tilde{C}$. Then the initialization is given as 
\begin{align}\label{sec6.eq9}
        \bar{K}_i^{(0)}&=\tilde{K}_i/|\tilde{K}_i|^{1/p_i},\, \nu^{(0)}=\prod_{i=1}^2|\tilde{K}_i|^{1/p_i},\,\lambda^{(0)}=\frac{p-\sum_{i=1}^r \lambda_i(\tilde{C}_r)}{p-r},\, A^{(0)}=U_r\Lambda_r,
\end{align}
where $U_r$ is a top$-r$ eigenvectors of $\tilde{C}_r$ and $\Lambda_r=\diag\parentheses{\sqrt{\lambda_1(\tilde{C}_r)},\ldots,\sqrt{\lambda_r(\tilde{C}_r)}}$. We summarize the optimization procedure discussed above in Algorithm \ref{sec6.algorithm1}. 
\begin{algorithm}
\caption{An algorithm for alternating minimization of $\ell$ in $(\bar{K}_1,\bar{K}_2,\nu,A,\lambda)$.}\label{sec6.algorithm1}
 $\epsilon>0:$ tolerance parameter, $T\in\bbN:$ maximum number of iterations, $Y_1,\ldots,Y_n\in\real^{p_1\times p_2}$: $n$ data matrices, $r\in\bbN:$ partial-isotropy rank. 
\begin{algorithmic}
\State Compute the sample covariance matrix $S$ of $Y_1,\ldots,Y_n$.
\State Compute the initialization $(\bar{K}_1^{(0)},\bar{K}_2^{(0)},\nu^{(0)},A^{(0)},\lambda^{(0)})$ according to (\ref{sec6.eq9}).
\State $\ell^{(1)}=\ell(\bar{K}_1^{(0)},\bar{K}_2^{(0)},\nu^{(0)},A^{(0)},\lambda^{(0)})$, $\ell^{(0)}=\ell^{(1)}/2$.
\State $t=1$.
\While{$|\ell^{(t-1)}-\ell^{(t)}|/|\ell^{(t)}|>\epsilon $ and $t< T$}
\State $\ell^{(t-1)}=\ell^{(t)}$.
\State $t=t+1$. 
\State Obtain $\bar{K}_1^{(t)}$ according to (\ref{sec6.eq5})--(\ref{sec6.eq6}).
\State Obtain $\bar{K}_2^{(t)}$ according to (\ref{sec6.eq5})--(\ref{sec6.eq6}).
\State Obtain $\nu^{(t)}$ according to (\ref{sec6.eq8}). 
\State Obtain $A^{(t)}$ according to (\ref{sec6.eq5}) and (\ref{sec6.eq7}).
\State Solve the fifth equation of (\ref{sec6.eq4}) using \texttt{R} function \texttt{optimize} to obtain $\lambda^{(t)}$.
\State $\ell^{(t)}=\ell(\bar{K}_1^{(t)},\bar{K}_2^{(t)},\nu^{(t)},A^{(t)},\lambda^{(t)})$.
\EndWhile
\end{algorithmic}
\end{algorithm}

\section{Illustration of PICSE}\label{sec7}

We illustrate the effectiveness of $\PICSE$ based on synthetic data.\footnote{Replication code is available at \url{https://github.com/Seungbongjung/riemmCore}.} We randomly generate the random matrices $Y_1,\ldots,Y_n$ according to $N_{p_1\times p_2}(0,\Sigma)$, where $\Sigma$ is given as follows; for $K^{1/2}=K_2\otimes K_1\in\calS_{p_1,p_2}^{++}$, $A\in\calC_{p_1,p_2,r}$, $\lambda\in(0,1)$, and a diagonal $D\in\calC_{p_1,p_2}^{++}$,
\begin{itemize}
    \item[]\textbf{(M1)} $\Sigma=K^{1/2}((1-\lambda)AA^\top+\lambda I_p)K^{1/2,\top}$,
    \item[]\textbf{(M2)} $\Sigma=K^{1/2}((1-\lambda)AA^\top+\lambda D)K^{1/2,\top}$.
\end{itemize}
The model \textbf{(M1)} is the partial isotropy core covariance model in (\ref{sec6.eq1}), and thus $c(\Sigma)=(1-\lambda)AA^\top+\lambda I_p$. On the other hand, the model \textbf{(M2)} is a variant of \textbf{(M1)}, motivated by a general factor covariance model \cite{lopes2008,carvalho2008}. We also consider this model to examine the robustness of $\PICSE$ under a broader class of covariance models for matrix-variate data, containing the partial isotropy core covariance model. Under this model, $c(\Sigma)=(1-\lambda)AA^\top+\lambda D$. By the linear system in (\ref{sec2.1.eq2}) that defines the core, if a diagonal $D$ is a core, $(1-\lambda)AA^\top+\lambda D$ is again a core. One can easily generate such a $D$ by randomly generating a positive definite diagonal $\tilde{D}$ and then taking its core $D=c(\tilde{D})$. This is because the Kronecker MLE of any positive definite diagonal matrix $\tilde{D}$ is again diagonal, and so is its core (\cite{hoff2023}, Corollary $1$).      

To investigate how $\PICSE$ behaves by varying degrees of how $\Sigma$ is separable, we take $\lambda=0.2,0.4,0.6,0.8$. Note that the smaller $\lambda$ is, the less separable $\Sigma$ is. Also, we consider $r=3,5$, $(p_1,p_2)=(16,12),(18,8)$, and $n=p/8,p/4,p/2,p,2p$. The other parameters $(K_1,K_2,A,D)$ are randomly generated. We assume the known $r$. In practice, the value of $r$ can be determined by estimating the number of the spiked eigenvalues of the true core using the sample core in view of Kronecker-invariance \cite{sung2025}, assuming the constant non-spiked eigenvalues, e.g., \cite{passemier2012,passemier2014}.  

To describe the competitors of $\PICSE$, suppose $S$ is a sample covariance matrix based on random matrices $Y_1,\ldots,Y_n$. Let $\tilde{K}$ and $\tilde{C}$ be the separable and the core components of $S$, respectively. Then we consider the Kronecker MLE ($\KMLE$) \cite{soloveychik2016,srivastava2008,dutilleul999}, which is exactly $\tilde{K}$, and the core shrinkage estimator ($\CSE$) proposed by \cite{hoff2023}. The $\CSE$ is defined by $\tilde{K}^{1/2}((1-\hat{w})\tilde{C}+\hat{w}I_p)\tilde{K}^{1/2}$, where $\tilde{K}^{1/2}$ is a symmetric square root of $\tilde{K}$ and the shrinkage amount $\hat{w}$ is estimated via empirical Bayes (see Section $3.1$ of \cite{hoff2023}). Additionally, we consider the ad hoc estimator based on the initialization in (\ref{sec6.eq9}). Namely, we obtain the estimators by plugging the initialization in (\ref{sec6.eq9}) into (\ref{sec6.eq3}), denoted \texttt{Base-AI} ($K^{1/2}\in\calS_{p_1,p_2}^{++}$). Note that the initialization method yields the same estimate of $\Sigma$ if $K^{1/2}\in\calL_{p_1,p_2}^{++}$. For $\PICSE$, we consider two versions, denoted by \texttt{PI-AI} ($K^{1/2}\in\calS_{p_1,p_2}^{++}$) and \texttt{PI-Chol} $(K^{1/2}\in\calL_{p_1,p_2}^{++})$. The ad hoc estimator is considered to examine whether leveraging the geometry of $\calC_{p_1,p_2,r}$ to find the optimal direction in updating $A$ leads to a better estimator.   

We present the results with respect to estimating $\Sigma$ for both models (\textbf{M1})--(\textbf{M2}). Figures \ref{sec7.figure1}--\ref{sec7.figure4} show the box plots of the relative errors across $100$ iterations with different $(p_1,p_2,r)$, $\lambda$, and $n$. In general, one can verify that $\KMLE$ performs poorly compared to other methods and shows a small standard deviation of the relative norms. This is because the dimension of $\calS_{p_1,p_2}^{++}$ is much lower than that of the space where the partial-isotropy core covariance lies. Thus, $\KMLE$ is already close to the pseudo-true parameter, namely, the separable component of $\Sigma$, $K$. Hence, $\KMLE$ may be estimating $K$ well but yield a poor estimate of $\Sigma$ as its core component is fixed as $I_p$. This implies that the Kronecker MLE is not a good estimate if the true covariance is not separable. 

On the other hand, the other methods tend to show the improved performance as $n$ grows for each choice of $\lambda$ and $(p_1,p_2,r)$ under the models \textbf{(M1)}--\textbf{(M2)}. Note that both \texttt{PI-AI} and \texttt{PI-Chol} perform better than $\CSE$ and \texttt{Base-AI}, particularly for the small values of $n$ and $\lambda$ under both \textbf{(M1)}--\textbf{(M2)}. This illustrates the robustness of $\PICSE$. Also, the performance gap between \texttt{Base-AI} and $\PICSE$ is more obvious with small $n$ and $\lambda$. This is because $\PICSE$ leverages the curvature of the negative log-likelihood in (\ref{sec6.eq2}) to find the optimal $A$. Also, while there might be no significant improvement for $\PICSE$ in estimating $K$ compared to other methods as the dimension of $\calS_{p_1,p_2}^{++}$ is $o(p^2)$, there is in estimating $C$, particularly with small $n$ and $\lambda$. This implies that $\PICSE$ is effective in a high-dimensional regime when the true core exhibits a low-dimensional feature and is far from the separability.         

Lastly, note that the only scenario where $\CSE$ performs better than $\PICSE$ and \texttt{Base-AI} is for small $n$ but large $\lambda$. That is, when the sample size is small, whereas the true covariance is close to separability, $\CSE$ can perform better than these two estimators. The reason is that $\CSE$ estimates the non-spiked eigenvalue $\lambda$ via empirical Bayes and tends to shrink more toward the separability compared to $\PICSE$. Hence, it may be less prone to overfitting for small sample sizes when $\Sigma$ is close to separability.

\begin{figure}
    \centering
    \includegraphics[width=14cm,height=10cm]{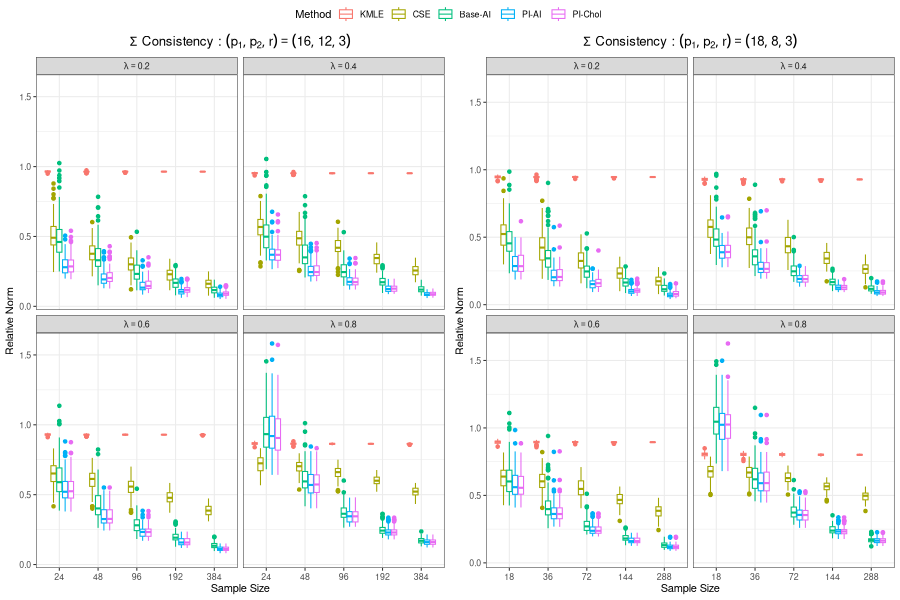}
    \caption{\small The box plots of the relative norms $||\hat{\Sigma}-\Sigma||_2/||\Sigma||_2$ by \texttt{KMLE}, \texttt{CSE}, \texttt{Base-AI}, \texttt{PI-AI}, and \texttt{PI-Chol}, and the sample size $n=p/8,p/4,p/2,p,2p$ across $100$ iterations for $(p_1,p_2,r)=(16,12,3),(18,8,3)$ and $\lambda=0.2,0.4,0.6,0.8$ under the model \textbf{(M1)}. \texttt{Base-AI} and \texttt{Base-Chol} yield the same $\hat{\Sigma}$, and thus the result is reported only for \texttt{Base-AI} as a representative.}
    \label{sec7.figure1}
\end{figure}

\begin{figure}
    \centering
    \includegraphics[width=14cm,height=10cm]{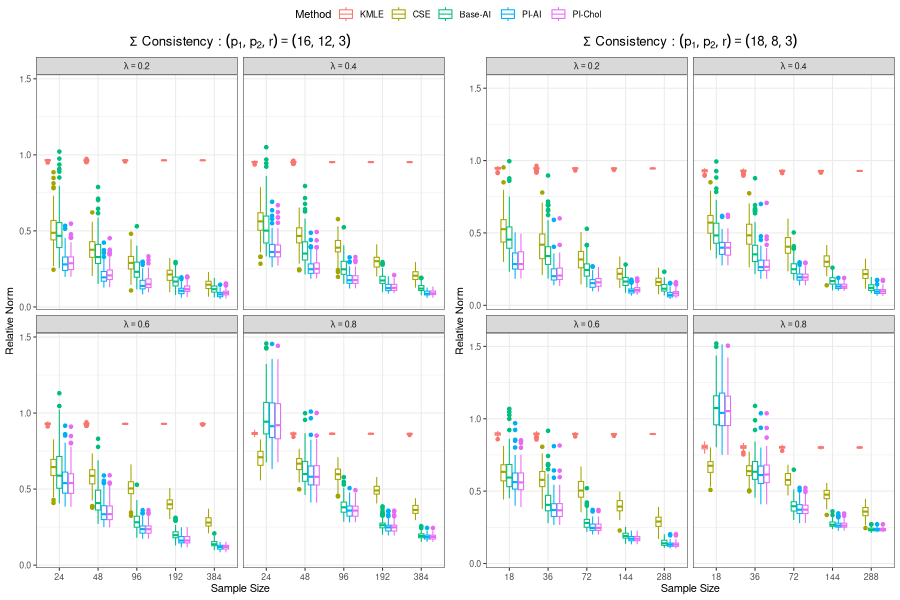}
    \caption{\small The box plots of the relative norms $||\hat{\Sigma}-\Sigma||_2/||\Sigma||_2$ by \texttt{KMLE}, \texttt{CSE}, \texttt{Base-AI}, \texttt{PI-AI}, and \texttt{PI-Chol}, and the sample size $n=p/8,p/4,p/2,p,2p$ across $100$ iterations for $(p_1,p_2,r)=(16,12,3),(18,8,3)$ and $\lambda=0.2,0.4,0.6,0.8$ under the model \textbf{(M2)}. \texttt{Base-AI} and \texttt{Base-Chol} yield the same $\hat{\Sigma}$, and thus the result is reported only for \texttt{Base-AI} as a representative.}
    \label{sec7.figure2}
\end{figure}

\begin{figure}
    \centering
    \includegraphics[width=14cm,height=10cm]{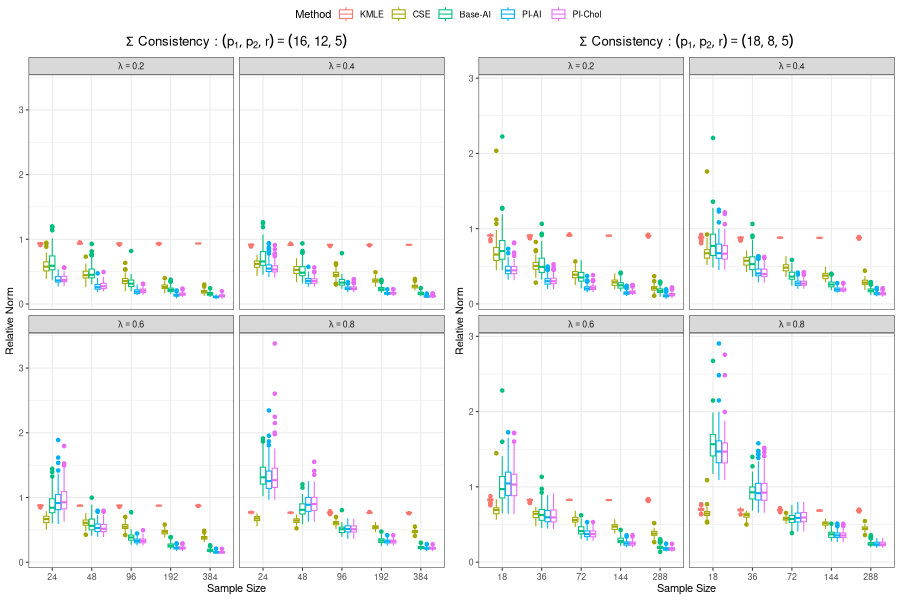}
    \caption{\small The box plots of the relative norms $||\hat{\Sigma}-\Sigma||_2/||\Sigma||_2$ by \texttt{KMLE}, \texttt{CSE}, \texttt{Base-AI}, \texttt{PI-AI}, and \texttt{PI-Chol}, and the sample size $n=p/8,p/4,p/2,p,2p$ across $100$ iterations for $(p_1,p_2,r)=(16,12,5),(18,8,5)$ and $\lambda=0.2,0.4,0.6,0.8$ under the model \textbf{(M1)}. \texttt{Base-AI} and \texttt{Base-Chol} yield the same $\hat{\Sigma}$, and thus the result is reported only for \texttt{Base-AI} as a representative.}
    \label{sec7.figure3}
\end{figure}

\begin{figure}
    \centering
    \includegraphics[width=14cm,height=10cm]{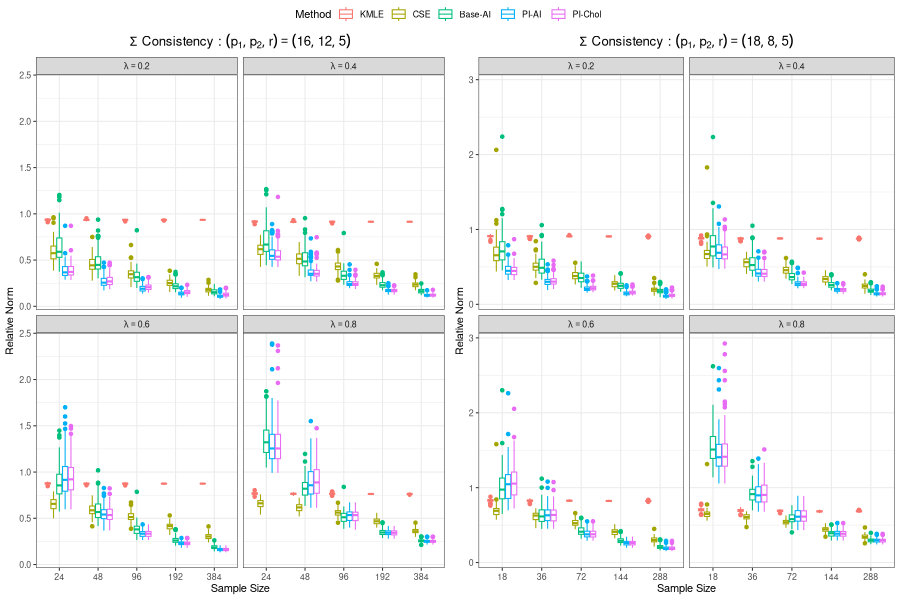}
    \caption{\small The box plots of the relative norms $||\hat{\Sigma}-\Sigma||_2/||\Sigma||_2$ by \texttt{KMLE}, \texttt{CSE}, \texttt{Base-AI}, \texttt{PI-AI}, and \texttt{PI-Chol}, and the sample size $n=p/8,p/4,p/2,p,2p$ across $100$ iterations for $(p_1,p_2,r)=(16,12,5),(18,8,5)$ and $\lambda=0.2,0.4,0.6,0.8$ under the model \textbf{(M2)}. \texttt{Base-AI} and \texttt{Base-Chol} yield the same $\hat{\Sigma}$, and thus the result is reported only for \texttt{Base-AI} as a representative.}
    \label{sec7.figure4}
\end{figure}

\section{Concluding remarks}\label{sec8}
We have studied the geometry of the fixed-rank core covariance manifold $\calC_{p_1,p_2,r}^{+}$ with $p_1/p_2+p_2/p_1<r\leq p$. When $r<p$, we established that $\calC_{p_1,p_2,r}^{+}$ is a smooth manifold after removing the set of canonically decomposable matrices. For the full-rank case, we further established a diffeomorphic relationship between $\calS_{p}^{++}$ and $\calS_{p_1,p_2}^{++}\times \calC_{p_1,p_2}^{++}$, providing a new insight into the smooth structure of $\calS_{p}^{++}$ in terms of the separability. We also derived differential quantities on $\calC_{p_1,p_2,r}^+$, including tangent vectors, the differentials of the diffeomorphism when $r=p$, and Riemannian gradient and Hessian operator on  $\calC_{p_1,p_2,r}$, and $\calC_{p_1,p_2}^{++}$ under the Euclidean metric, with respect to which $\calC_{p_1,p_2}^{++}$ is flat. The corresponding Riemannian gradient and Hessian operator are also obtained for $\calC_{p_1,p_2,r}/\calO_r$ via quotient geometry. 

An interesting future direction is to identify a Riemannian metric on $\calC_{p_1,p_2,r}^+$ that induces nice geometric properties, such as completeness, closed-form geodesics, or nonpositive sectional curvature. One approach is to construct a group-invariant metric (see \cite{thanwerdas2023b}). By the linear system that defines the core as in (\ref{sec2.1.eq2}), the Kronecker orthogonal group $\calO_{p_1,p_2}$ acts smoothly on $\calC_{p_1,p_2,r}^+$ via the action $(O_2\otimes O_1,C)\in \calO_{p_1,p_2}\times \calC_{p_1,p_2,r}^+\rightarrow (O_2\otimes O_1)C(O_2\otimes O_1)^\top\in \calC_{p_1,p_2,r}^+$. However, this action is not transitive, and the invariant metric is hence not unique under this action. Thus, the metric will vary across orbits, leading to infinitely many invariant metrics. We leave this as a future direction to further explore the geometry of $\calC_{p_1,p_2,r}^+$.

We introduced the partial isotropy core shrinkage estimator ($\PICSE$), assuming that the population core has a partial-isotropy structure. Since the partial-isotropy (factor) covariance model is often used for vector data and the covariance of a random matrix is defined by that of its vectorization (see (\ref{sec1.eq1})), one might ask whether $\PICSE$ can be used to estimate a factor-type covariance matrix for general $p-$dimensional vector data with correctly specified $p_1$ and $p_2$. Although technically possible, we do not recommend using $\PICSE$ for vector data as it will lose interpretation. As discussed in Section \ref{sec6}, the partial-isotropy core covariance model aims to make an effective departure from the separability assumption commonly used in modeling matrix-variate data. Thus, using $\PICSE$ is meaningful only when the separability assumption is valid. Because the assumption enables a separate inference of the correlation structures of the row and column variables \cite{dawid1981,wang2022}, the data must have two different modes for the assumption, which is not the case for general vector data.  

\section*{Declaration of Competing Interest}

The authors declare that they have no known competing financial interests or personal relationships that could have appeared to influence the work reported in this paper.

\section*{Data availability}

No data was used for the research described in the article.

\appendix 

\section{Deferred proofs}\label{append.A}
In this section, we provide the omitted proofs from the main text.

\subsection{Proofs of the results from Section \ref{sec2}}\label{append.A.1}
\begin{proof}[Proof of Example \ref{sec2.1.ex1}]
    The proof is based on Proposition $3$ of \cite{hoff2023}. Suppose $K=\Sigma_2\otimes\Sigma_1$ is a Kronecker MLE of $FF^\top$. Then since $K^{-1/2}FF^\top K^{-1/2,\top}$ is a core, Proposition $3$ of \cite{hoff2023} implies that 
    \begin{align}\label{sec2.1.ex1.eq1}
        \sum_{(i,j)} E_{ij}\Omega_2 E_{ij}^\top&=2\Sigma_1,\; 
         \sum_{(i,j)} E_{ij}^\top \Omega_1 E_{ij}=2\Sigma_2,
    \end{align}
    for $\Omega_i=(\omega_{i,ab}):=\Sigma_i^{-1}$. From the first equation of (\ref{sec2.1.ex1.eq1}),
    \begin{align*}
        \Sigma_1=\text{diag}(\omega_{2,11}+\omega_{2,22},\omega_{2,22})/2.
    \end{align*}
Since $\Sigma_1$ is diagonal, so is $\Omega_1$. Thus, the second equation of (\ref{sec2.1.ex1.eq1}) should imply that $\Sigma_2$ is also diagonal. Hence, writing $\Sigma_2=\text{diag}(\sigma_{2,11},\sigma_{2,22})$, we have that 
\begin{align*}
    &\Sigma_{1}=\text{diag}(1/\sigma_{2,11}+1/\sigma_{2,22},1/\sigma_{2,22})/2\\
    \Rightarrow &\Omega_1=2\text{diag}(\sigma_{2,11}\sigma_{2,22}/(\sigma_{2,11}+\sigma_{2,22}),\sigma_{2,22}).
\end{align*}
Again, the second equation of (\ref{sec2.1.ex1.eq1}) implies that 
\begin{align*}
    \sigma_{2,11}=\sigma_{2,11}\sigma_{2,22}/(\sigma_{2,11}+\sigma_{2,22}).
\end{align*}
Since $\Sigma_{2}\in \calS_2^{++}$, we have that $\sigma_{2,22}/(\sigma_{2,11}+\sigma_{2,22})=1$, which is not true unless $\sigma_{2,11}=0$. Hence, this contradicts the existence of the Kronecker MLE $K$.
\end{proof}

\begin{proof}[Proof of Proposition \ref{sec2.2.2.prop1}]
     Let $v(t)=f(\gamma(t))$ for $t\in(0,1)$, where $\gamma(t)=\Exp_L(tV)$. Then $v$ is also smooth. Observe that  
    \begin{align}\label{sec2.2.2.prop1.eq1}
    \begin{split}
         \gamma'(t)&=\floor{V}+\bbD(V)\exp\parentheses{t\bbD(V)\bbD(L)^{-1}},\\
      \gamma''(t)&=\bbD(V)^2\bbD(L)^{-1}\exp\parentheses{t\bbD(V)\bbD(L)^{-1}}.
    \end{split}
    \end{align}
    By chain rule, 
    \begin{align}\label{sec2.2.2.prop1.eq2}
    \begin{split}
              v'(t)&=\tr\parentheses{\nabla f(\gamma(t))^\top \gamma'(t)},\\
              v''(t)&=\tr\parentheses{\nabla^2 f(\gamma(t))[\gamma'(t)]^\top \gamma'(t)}+\tr\parentheses{\nabla f(\gamma(t))^\top \gamma''(t)}.
    \end{split}
    \end{align}
    Then we have that 
    \begin{align*}
    v'(0)&=g_L(\grad f(L),V)\equiv \tr\parentheses{\floor{\grad f(L)}^\top \floor{V}}+\tr\parentheses{\bbD(L)^{-2}\bbD(\grad f(L))\bbD(V)}\\
    &=\tr\parentheses{\nabla f(L)^\top V}=\tr\parentheses{\floor{\nabla f(L)}^\top \floor{V}}+\tr\parentheses{\bbD(\nabla f(L))\bbD(V)}.
    \end{align*}
    Since this holds for any $V\in T_L\calL_{p}^{++}\equiv \calL_p$, we have that $\floor{\grad f(L)}=\floor{\nabla f(L)}$ and $\bbD(L)^{-2}\bbD(\grad f(L))=\bbD(\nabla f(L))$, resulting in 
    \begin{align*}
        \grad f(L)=\floor{\grad f(L)}+\bbD(\grad f(L))=\floor{\nabla f(L)}+\bbD(L)^2 \bbD(\nabla f(L)).
    \end{align*}
    Similarly, by (\ref{sec2.2.2.prop1.eq1})--(\ref{sec2.2.2.prop1.eq2}), 
    \begin{align*}
        v''(0)&=g_L(\Hess f(L)[V],V)=\tr\parentheses{\nabla^2 f(L)[V]^\top V}+\tr\parentheses{\nabla f(L)^\top \bbD(V)^2\bbD(L)^{-1}}\\
        &=\tr\parentheses{\nabla^2 f(L)[V]^\top V}+\tr\parentheses{\bbD(\nabla f(L)) \bbD(V)^2\bbD(L)^{-1}}.
    \end{align*}
    By polarization and the symmetry of Riemannian Hessian operator, for any $V,W\in T_L\calL_p^{++}$,
    \begin{align*}
        g_L(\Hess f(L)[V],W)=\tr\parentheses{\nabla^2 f(L)[V]^\top W}+\tr\parentheses{\bbD(\nabla f(L)) \bbD(V)\bbD(W)\bbD(L)^{-1}}.
    \end{align*}
    As an analogy to $\grad f(L)$, one can then identify $\Hess f(L)[V]$ by 
    \begin{align*}
        \Hess f(L)[V]=\bbD(L)^2\bbD(\nabla^2 f(L)[V])+\floor{\nabla^2 f(L)[V]}+\bbD(L)\bbD(\nabla f(L))\bbD(V).
    \end{align*}
\end{proof}

\begin{proof}[Proof of Proposition \ref{sec2.2.2.prop2}]
Observe that 
\begin{align*}
    \tr\parentheses{L^{-1}\calP_L(V)}&=\tr(L^{-1}V)-\tr(L^{-1}V)\cdot \tr(L^{-1}\bbD(L))/p\\
    &=\tr(L^{-1}V)-\tr(L^{-1}V)\cdot \tr(\bbD(L^{-1})\bbD(L))/p\\
    &=\tr(L^{-1}V)-\tr(L^{-1}V)\cdot \tr(\bbD(L)^{-1}\bbD(L))/p\\
    &=\tr(L^{-1}V)-\tr(L^{-1}V)=0,
\end{align*}
where the third equality holds as $L\in\calL_{p}^{++}$. Thus, $\calP_L$ indeed maps $V\in T_L\calL_p^{++}$ to $T_L\bbP(\calL_p^{++})$. Hence,  it suffices to verify that $g_L^\chol(V,\bbD(L))=0$ for any $V\in T_L\bbP(\calL_p^{++})$ to claim that $\calP_L$ is an orthogonal projection. This follows because
\begin{align*}
    g_L^\chol(V,\bbD(L))=g^E(\bbD(L)^{-2}\bbD(V),\bbD(L))=\tr(\bbD(L)^{-1}\bbD(V))=\tr(L^{-1}V)=0,
\end{align*}
where the third equality holds because $L\in\calL_{p}^{++}$ and $V\in\calL_p$, and the last equality follows as $V\in T_L\bbP(\calL_p^{++})$.

\end{proof}

\subsection{Proofs of the results from Section \ref{sec3}}\label{append.A.2}

\begin{proof}[Proof of Lemma \ref{sec3.lemma1}]
Let $\phi_{R}:A\in (\real^{p_1\times p_2})_*^p\rightarrow [A_1,\ldots,A_p]\in \real^{p_1\times pp_2}$ and $\phi_C:A\in (\real^{p_1\times p_2})_*^p\rightarrow [A_1^\top,\ldots,A_p^\top]\in \real^{p_2\times pp_1}$. For any $A\in (\real^{p_1\times p_2})_*^p$, $A_R\equiv \phi_R(A)\phi_R(A)^\top$ and $A_C\equiv \phi_C(A)\phi_C(A)^\top$ are of full-rank if and only if $\phi_R(A)$ and $\phi_C(A)$ are so, respectively. Thus, $\calH_{p_1,p_2}=\phi_R^{-1}(\real_*^{p_1\times pp_2})\cap \phi_C^{-1}(\real_*^{p_2\times pp_1})$. Note that both the maps $\phi_R$ and $\phi_C$ are smooth, and $\real_*^{p_1\times pp_2}$ and $\real_*^{p_2\times pp_1}$ are open in their respective ambient space. Thus, both $\phi_R^{-1}(\real_*^{p_1\times pp_2})$ and $ \phi_C^{-1}(\real_*^{p_2\times pp_1})$ are open in  $(\real^{p_1\times p_2})_*^p$. Hence, the transversality theorem (\cite{lee2012}, Theorem 6.35) implies that $\calH_{p_1,p_2}$ is an open submanifold of $(\real^{p_1\times p_2})_*^p$. Because $(\real^{p_1\times p_2})_*^p$ is diffeomorphic to $\real_*^{p\times p}$ via the map $\varphi_{p_1,p_2}$, $(\real^{p_1\times p_2})_*^p$ is open in $(\real^{p_1\times p_2})^p$. Therefore, $T_A\calH_{p_1,p_2}=T_A(\real^{p_1\times p_2})_*^p\equiv (\real^{p_1\times p_2})^p$ for any $A\in\calH_{p_1,p_2}$.
\end{proof}

\begin{proof}[Proof of Proposition \ref{sec3.prop1}]
Following the main idea outlined in the sketch of proof, it suffices to verify (\ref{sec3.prop1.eq2}). Take $B=(B_1,\ldots,B_p)\in T_A\calH_{p_1,p_2}$. Note that $a^\top a=p$, $A_R=p_2 I_{p_1}$, and $A_C=p_1 I_{p_2}$. By Theorem $3.1$ of \cite{magnus1979}, $K_{(q,q)}$ is a symmetric matrix whose eigenvalues are either $1$ or $-1$, with respective multiplicities $q(q+1)/2$ and $q(q-1)/2$. Moreover, the eigenspace of $K_{(q,q)}$ corresponding to the eigenvalue $1$ (resp. $-1$) is exactly the vectorization of $\calS_{q}$ (resp. $\textsf{Skew}_{q}$). By vec-Kronecker identity,
\begin{align*}
[A_1\otimes I_{p_1},\ldots,A_p\otimes I_{p_1}]a&=\sum_{i=1}^p \text{vec}(A_iA_i^\top)=p_2\text{vec}(I_{p_1}),\\
\end{align*}

Combining these facts, one can verify that  
\begin{align*}
    J_1J_1^\top&=\frac{2}{p_1^2p_2}(I_{p_1^2}+K_{(p_1,p_1)})-\frac{4}{p_1^3p_2}\text{vec}(I_{p_1})\text{vec}(I_{p_1})^\top,\\
     J_2J_2^\top&=\frac{2}{p_1p_2^2}(I_{p_2^2}+K_{(p_2,p_2)})-\frac{4}{p_1p_2^3}\text{vec}(I_{p_2})\text{vec}(I_{p_2})^\top.
\end{align*}
By the aforementioned properties of $K_{(q,q)}$, $I_{q^2}+K_{(q,q)}$ takes eigenvalues either $2$ or $0$ with respective multiplicities $q(q+1)/2$ and $q(q-1)/2$. The eigenspaces of this matrix corresponding to $2$ and $0$ are the same as those of $K_{(q,q)}$ corresponding to $1$ and $-1$, respectively. Therefore,
\begin{align*}
    \dim C(J_1^\top)=\text{rank}(J_1^\top)=\text{rank}(J_1J_1^\top)=\binom{p_1+1}{2}-1
\end{align*}
and similarly, $\dim C(J_2^\top)=\binom{p_2+1}{2}-1$. It remains to show that $\dim C(J_1^\top)\cap C(J_2^\top)=0$, i.e., $C(J_1^\top)\cap C(J_2^\top)=\set{\mathbf{0}_{p^2}}$. Because $(I_{q^2}+K_{(q,q)})u=2\text{vec}(\textsf{sym}(U))$ for $u=\text{vec}(U)$ and $U\in \real^{q\times q}$, if we assume $v\in\real^{p_1^2}$ is a vectorization of some $V\in\calS_{p_1}$ without loss of generality, 
\begin{align*}
    J_1^\top v&=\frac{2}{p}\left[\begin{array}{c}
    (A_1^\top \otimes I_{p_1}) v \\
    \vdots \\
    (A_p^\top \otimes I_{p_1})v
    \end{array}\right]-\frac{2\tr\parentheses{V}}{p_1^2p_2} a=\frac{2}{p}\left[\begin{array}{c}
     \text{vec}(VA_1) \\
    \vdots \\
    \text{vec}(VA_p)
    \end{array}\right]-\frac{2\tr\parentheses{V}}{p_1^2p_2}\left[\begin{array}{c}
    \text{vec}(A_1)\\
    \vdots \\
    \text{vec}(A_p)
    \end{array}\right].
\end{align*}
Likewise, if $w=\text{vec}(W)$ for some $W\in \calS_{p_2}$, 
\begin{align*}
    J_2^\top w=\frac{2}{p}\left[\begin{array}{c}
     \text{vec}(A_1W) \\
    \vdots \\
    \text{vec}(A_pW)
    \end{array}\right]-\frac{2\tr\parentheses{W}}{p_1p_2^2}\left[\begin{array}{c}
    \text{vec}(A_1)\\
    \vdots \\
    \text{vec}(A_p)
    \end{array}\right].
\end{align*}
Therefore, for any element in $C(J_1^\top)\cap C(J_2^\top)$, there exist $V\in \calS_{p_1}$ and $W\in \calS_{p_2}$ such that for all $i\in [p]$,
\begin{align}\label{sec3.prop1.eq3}
    (V-\tr\parentheses{V}/p_1 I_{p_1})A_i=A_i(W-\tr\parentheses{W}/p_2 I_{p_2}).
\end{align}
Suppose $\Gamma_V \Lambda_V\Gamma_V^\top$ and $\Gamma_W\Lambda_W\Gamma_W^\top$ are eigendecompositions of $V$ and $W$, respecitvely, where $\Gamma_V$ and $\Gamma_W$ are orthogonal, and $\Lambda_V$ and $\Lambda_W$ are diagonal. With $\tilde{A}_i=\Gamma_V^\top A_i\Gamma_W$ for each $i\in[r]$, the equation (\ref{sec3.prop1.eq3}) can be reformulated as 
\begin{align}\label{sec3.prop1.eq4}
   \tilde{\Lambda}_V\tilde{A}_i=\tilde{A}_i\tilde{\Lambda}_W
\end{align}
where $\tilde{\Lambda}_V=\Lambda_V-\tr\parentheses{\Lambda_V}/p_1 I_{p_1}$ and  $\tilde{\Lambda}_W=\Lambda_W-\tr\parentheses{\Lambda_W}/p_2 I_{p_2}$. The equation (\ref{sec3.prop1.eq4}) holds for each $i$ if and only if $((\tilde{\Lambda}_V)_{jj}-(\tilde{\Lambda}_W)_{kk})(\tilde{A}_i)_{jk}=0$ for any $(j,k)\in[p_1]\times [p_2]$ and $i\in [p]$. Note that $\tilde{A}_1,\ldots,\tilde{A}_p$ are linearly independent by the definition of $\calH_{p_1,p_2}$. Thus, for any fixed $(j,k)\in[p_1]\times [p_2]$, there must exist at least one $i\in[p]$ for which $(\tilde{A}_i)_{jk}$ is nonzero, otherwise the linear independence is violated. Hence, $(\tilde{\Lambda}_V)_{jj}=(\tilde{\Lambda}_W)_{kk}$ for any $j,k$. As such, the diagonal entries of $\tilde{\Lambda}_V$ and $\tilde{\Lambda}_W$ are all equal to some constant $c$. However, as the traces of these matrices are the same as $0$, both $\tilde{\Lambda}_V$ and $\tilde{\Lambda}_W$ are zero diagonal matrices so that $V=\Lambda_V=c_1I_{p_1}$ and $W=\Lambda_W=c_2 I_{p_2}$ for some constants $c_1,c_2$, leading to zero vectors $J_1^\top v$ and $J_2^\top w$. Hence, $C(J_1^\top)\cap C(J_2^\top)=\set{0}$, concluding the proof.  
\end{proof}

\begin{proof}[Proof of Lemma \ref{sec3.lemma2}]
    The smoothness of the action is obvious. Also, since $\rank(X)=p$ for any $X\in\calC_{p_1,p_2}$ so that $X$ is non-singular, $XO_1=XO_2$ implies that $O_1=O_2$ and thus the action is free. To show that the action is well-defined, let $X=[x_1,\ldots,x_p]\in\calC_{p_1,p_2}$, $O=[y_1,\ldots,y_p]\in \calO_p$, and $Y=XO=[y_1,\ldots,y_p]$. Suppose $X_i:=\textsf{mat}_{p_1\times p_2}(x_i)$ and $Y_i:=\textsf{mat}_{p_1\times p_2}(y_i)$. With $\bar{Y}=(Y_1,\ldots,Y_p)$, we shall verify that $Y\in \calD_{p_1,p_2}$. Note that $\bar{Y}\in(\real^{p\times p})_*^p$ if and only if $Y_i$'s are linearly independent, which holds as $Y$ is of rank$-p$ and the action of $\calO_p$ does not alter the rank of $X$. Since $Y_i=\sum_{j=1}^p X_jo_{ ji}$,
\begin{align*}
  \tilde{Y}_R\tilde{Y}_R^\top =\sum_{i=1}^p \sum_{j,j'=1}^p X_jX_{j'}^\top o_{ji}o_{j'i}=\sum_{j,j'=1}^p X_jX_{j'}^\top \sum_{i=1}^p o_{ji}o_{j'i}=\sum_{j=1}^p X_jX_j^\top=p_2 I_{p_1},
\end{align*}
where the last equality holds as $(X_1,\ldots,X_p)\in\calD_{p_1,p_2}$. Similarly,  $\tilde{Y}_C\tilde{Y}_C^\top =p_1 I_{p_2}$.
\end{proof}

\begin{proof}[Proof of Lemma \ref{sec3.lemma3}]
      By the quotient manifold theorem (\cite{lee2012}, Theorem $21.10$), both $\calM/G$ and $\calN/G$ are smooth manifolds. Let $i:\calN/G\xhookrightarrow{}\calM/G$ be an inclusion map. We claim that $i$ is a smooth embedding. By Theorem $4.4$ of \cite{martin2017}, $i$ is an injective immersion. To show that $i$ is a topological embedding, let $\pi:X\in\calM\rightarrow[X]\in\calM/G$ be the canonical projection, which is a smooth submersion. Hence, $\pi$ is open and $\calN$ is $G-$invariant so that $\calN$ is saturated for the map $\pi$. Here the subset $C\subset U$ is saturated for the map $f:U\rightarrow V$ between two topological spaces if $f^{-1}(f(C))=C$. Since $\calN$ is a topological subspace of $\calM$ as it is embedded, so is $\pi(\calN)\equiv \calN/G$ of the quotient space $\pi(\calM)\equiv \calM/G$ (\cite{munkres2000}, Theorem $22.1$). Thus, $i$ is a topological embedding.
\end{proof}

\begin{proof}[Proof of Proposition \ref{sec3.prop2}]
        Let $\mathcal{W}:= \set{W\in\calS_p:\tr_1(V)=\mathbf{0}_{p_1\times p_1},\tr_2(V)=\mathbf{0}_{p_2\times p_2}}$. Taking any $W\in T_C\calC_{p_1,p_2}^{++}$, suppose $\gamma:(-\epsilon,\epsilon)\rightarrow \calC_{p_1,p_2}^{++}$ is a smooth curve emanating from $C$ in the direction of $W$ for sufficiently small $\epsilon>0$ so that the curve is moving around $\calC_{p_1,p_2}^{++}$. For any such $t$, (\ref{sec2.1.eq2}) implies that 
\begin{align*}
\tr_1(\gamma(t))=p_2I_{p_1},\quad \tr_2(\gamma(t))=p_1I_{p_2}.
\end{align*}
Evaluating the derivative of the terms in both hand sides at $t=0$ for each equation above, we have that 
\begin{align}\label{sec3.prop2.eq1}
  \tr_1(W)=\mathbf{0}_{p_1\times p_1},\quad \tr_2(W)=\mathbf{0}_{p_2\times p_2}.
\end{align}
Thus, $W\in \mathcal{W}$ and so $T_C\calC_{p_1,p_2}^{++}\subset \mathcal{W}$. Note that $\dim T_C\calC_{p_1,p_2}^{++}=\binom{p+1}{2}-\binom{p_1+1}{2}-\binom{p_2+1}{2}+1$. Since both $T_C\calC_{p_1,p_2}^{++}$ and $\mathcal{W}$ are linear subspaces of $\calS_{p}$, it suffices to show that $\dim \mathcal{W}$ is the same as $\dim T_C\calC_{p_1,p_2}^{++}$. Let $(W_{[i,j]})$ be a block-partition of $W$. Then the equation (\ref{sec3.prop2.eq1}) is satisfied if and only if $\tr\parentheses{W_{[i,j]}}=0$ for all $i,j$, and $W_{[p_2,p_2]}=-\sum_{i=1}^{p_2-1}W_{[i,i]}$. The subspace of $\real^{p_1\times p_1}$ whose trace is $0$ is of dimension $p_1^2-1$, and there are exactly $\binom{p_2}{2}$ upper-diagonal blocks. Also, each of the diagonal blocks belongs to the subspace of $\calS_{p_1}$ whose trace is $0$ and dimension is $p_1-1+\binom{p_1}{2}$. Since $W_{[p_2,p_2]}$ is determined by the rest of the diagonal blocks, the dimension of $\mathcal{W}$ is given by 
\begin{align*}
    (p_1^2-1)\binom{p_2}{2}+\parentheses{p_1-1+\binom{p_1}{2}}(p_2-1)=\binom{p+1}{2}-\binom{p_1+1}{2}-\binom{p_2+1}{2}+1.
\end{align*}
\end{proof}

\subsection{Proofs of the results from Section \ref{sec4.1}}\label{append.A.3}

\begin{proof}[Proof of Lemma \ref{sec4.1.lemma1}]
The claim is obvious if either $\alpha=2$ or $\beta=2$, and so assume $\alpha,\beta>2$. Also, assume $\alpha\geq \beta$ without loss of generality. Since $m$ is smooth on its compact domain, $m$ attains its maximum on the domain. This happens at either its stationary point within the interior of the domain or the boundary of the domain. Noting that $m(a,b)=m(\alpha-a,\beta-b)$ and $[1,\alpha-1]\times [1,\beta-1]$ is symmetric around $(\alpha/2,\beta/2)$, it suffices to examine the maximum of $m$ at the boundary of its domain by the maximum of $f(b):=m(1,b)$ over $[1,\beta-1]$. It is straightforward to see that $(\alpha/2,\beta/2)$ is a unique stationary point of $m$ with $m(\alpha/2,\beta/2)=\alpha^2/4+\beta^2/4+r\alpha\beta/2<r\alpha\beta$ as $r>\alpha/\beta+\beta/\alpha$. To examine the maximum of $f$, note that 
\begin{align*}
    f(b)&=\alpha-1+b(\beta-b)+r(b+(\alpha-1)(\beta-b))\\
    &=-b^2+b(\beta+r(2-\alpha))+(\alpha-1)(r\beta+1).
\end{align*}
Observe that $(\beta+r(2-\alpha))/2\leq \beta-1$ as $\alpha>2$. Also, $(\beta+r(2-\alpha))/2\geq 1$ if and only if $r<(\beta-2)/(\alpha-2)$. However, because $(\beta-2)/(\alpha-2)\leq \beta/\alpha$ as $\alpha\geq \beta$, this cannot hold as $r>\alpha/\beta+\beta/\alpha>\beta/\alpha$. Thus, $f$ attains its maximum at $b=1$, where $f(1)=\alpha+\beta-2+r(\alpha\beta-\alpha-\beta+2)$. Since 
\begin{align*}
    r\alpha\beta-f(1)&=(r-1)(\alpha+\beta-2)>0
\end{align*}
as $r>\alpha/\beta+\beta/\alpha\geq 2$ and $\alpha,\beta>2$, we conclude that the maximum of $m$ is strictly smaller than $r\alpha\beta$.
\end{proof}

\begin{proof}[Proof of Proposition \ref{sec4.1.prop1}]
    Define a subset $\calV_{p_1,p_2,r}^{(a,b)}$ of $\calV_{p_1,p_2,r}$ for $a\in[p_1-1]$ and $b\in[p_2-1]$, consisting of canoincally decomposable $(A_1\ldots,A_r)$ for which there exists $(P,Q)\in GL_{p_1}\times GL_{p_2}$ such that $PA_iQ^{-1}=A_{i1}\oplus A_{i2}$ for some $A_{i1}\in \real^{a\times b}$ and $A_{i2}\in\real^{(p_1-a)\times (p_2-b)}$. Then 
\begin{align*}
    \calV_{p_1,p_2,r}=\cup_{a=1}^{p_1-1}\cup_{b=1}^{p_2-1}\calV_{p_1,p_2,r}^{(a,b)}.
\end{align*}
We claim that for each $(a,b)$, the set $\calV_{p_1,p_2,r}^{(a,b)}$ is a proper Zariski-closed in $(\real^{p_1\times p_2})^r$. By Lemma \ref{sec2.4.lemma1}, $\calV_{p_1,p_2,r}$ is also Zariski-closed and hence closed in Euclidean sense. Also, the fourth item of Lemma \ref{sec2.4.lemma1} implies that $\calV_{p_1,p_2,r}$ also has a measure zero, concluding the claim. It will be shown that the dimension of $\calV_{p_1,p_2,r}^{(a,b)}$ is upper bounded by $m(a,b)$ for the map $m$ defined in Lemma \ref{sec4.1.lemma1} with $\alpha=p_1$ and $\beta=p_2$. The first item of Lemma \ref{sec2.4.lemma1} implies that the dimension of $\calV_{p_1,p_2,r}$ is also upper bounded by $\max_{(a,b)\in[p_1-1]\times [p_2-1]}m(a,b)$. Because $r>p_1/p_2+p_2/p_1$ by the assumption, Lemma \ref{sec4.1.lemma1} implies that the dimension of  $\calV_{p_1,p_2,r}^{(a,b)}$ is strictly smaller than that of the ambient space $(\real^{p_1\times p_2})^r$, $pr$, and thus $\calV_{p_1,p_2,r}^{(a,b)}$ is indeed proper. 

To show that each $\calV_{p_1,p_2,r}^{(a,b)}$ is Zariski-closed in $(\real^{p_1\times p_2})^r$, let $A=(A_1,\ldots,A_r)\in\calV_{p_1,p_2,r}^{(a,b)}$. Then there exists $(P,Q)\in GL_{p_1}\times GL_{p_2}$ such that $PA_iQ^{-1}=A_{i1}\oplus A_{i2}$ for $A_{i1}\in \real^{a\times b}$ and $A_{i2}\in\real^{(p_1-a)\times(p_2-b)}$. Viewing each matrix $A_i$ as a linear operator that maps $\real^{p_2}$ to $\real^{p_1}$, this implies that there exists a $b-$dimensional (resp. $a-$dimensional) subspace $U_1\subset\real^{p_2}$ (resp. $W_1\subset\real^{p_1}$) such that $\real^{p_2}=U_1\oplus U_2$ and $\real^{p_1}=W_1\oplus W_2$ for which $A_i(U_j)\subseteq W_j$, $i\in[r]$ and $j=1,2$. For given a linear subspace $V$, let $\calP_V$ be the orthogonal projection onto $V$. Then it is obvious that $I-\calP_{W_1}$ (resp. $I-\calP_{U_1}$) is an orthogonal projection onto $W_2$ ($U_2)$. Thus,  
\begin{align}\label{prop3.1.eq1}
    \mathcal{P}_{U_1}A_i(I-\mathcal{P}_{W_1})=0,\quad (I-\mathcal{P}_{U_1})A_i\mathcal{P}_{W_1}=0.
\end{align}

Recall that the orthogonal projection into a $d-$dimensional linear subspace of $\real^n$ corresponds to a unique element in a projective variety $\text{Gr}(d,n)$ as every linear subspace has its unique orthogonal projection. Namely, suppose $Z$ denotes a $d\times n$ matrix whose rows denote the basis of a $d-$dimensional subspace. Then the orthogonal projection onto this subspace is given by $Z^\top\textsf{adj}(ZZ^\top)Z/|ZZ^\top|$, where $\textsf{adj}(M)$ is an adjoint of a square matrix $M$. By Theorem $2.1$ of \cite{devriendt2024}, each entry of $Z^\top\textsf{adj}(ZZ^\top)Z$ and $|ZZ^\top|$ can be expressed as a quadratic polynomial in Pl\"uker coordinates. Note that $|ZZ^\top|\neq 0$. Hence, multiplying the quadratic polynomial corresponding to $|ZZ^\top|$ in Pl\"uker coordinates to both hand sides of the equations in (\ref{prop3.1.eq1}), we see that (\ref{prop3.1.eq1}) induces the system of finitely many equations of polynomials in Pl\"uker coordinates and affine coordinates (the entries of $A_1,\ldots,A_r$). Hence, if $\tilde{\calV}_{p_1,p_2,r}^{(a,b)}$ is a subset of $\mathcal{Z}:=\real\bbP^{\binom{p_1}{a}-1}\times\real \bbP^{\binom{p_2}{b}-1} \times \real^{pr}$ for which (\ref{prop3.1.eq1}) is satisfied, then $\tilde{\calV}_{p_1,p_2,r}^{(a,b)}$ is Zariski-closed as it is exactly the zero set of finitely many polynomials over $\mathcal{Z}$. Furthermore, since there are $ab+(p_1-a)(p_2-b)$ free coordinates for each $A_i$ with fixed $\calP_{U_1},\calP_{W_1}$, 
\begin{align*}
    \dim \tilde{\calV}_{p_1,p_2,r}^{(a,b)}=\dim \text{Gr}(a,p_1)+\dim \text{Gr}(b,p_2)+r(ab+(p_1-a)(p_2-b))=m(a,b)
\end{align*}
for the map $m$ defined in Lemma \ref{sec4.1.lemma1} with $\alpha=p_1$ and $\beta=p_2$.

Now suppose $\pi:\real\bbP^{\binom{p_1}{a}-1}\times\real \bbP^{\binom{p_2}{b}-1} \times \real^{pr}\rightarrow \real^{pr}$ is a projection morphism. As discussed in Section \ref{sec2.4} (see Definition \ref{sec2.4.def1}), $\calV_{p_1,p_2,r}^{(a,b)}\equiv \pi(\tilde{ \calV}_{p_1,p_2,r}^{(a,b)})$ is Zariski-closed in $\real^{pr}\cong (\real^{p_1\times p_2})^r$. Also, by the first item of Lemma \ref{sec2.4.lemma1}, 
\begin{align*}
   \dim \calV_{p_1,p_2,r}^{(a,b)}\leq \dim \tilde{ \calV}_{p_1,p_2,r}^{(a,b)}=m(a,b),
\end{align*}
proving the claim.  
\end{proof}

\subsection{Proofs of the results from Section \ref{sec4.2}}\label{append.A.4}

\begin{proof}[Proof of Theorem \ref{sec4.2.thm1}]
   Following the sketch of the proof in Section \ref{sec4.2}, we first prove that $\tilde{X}=\varphi_{p_1,p_2,r}^{-1}(\varphi_{p_1,p_2,r}(X)O)$ is canonically indecomposable for any $X=(X_1,\ldots,X_r)\in \calD_{p_1,p_2,r}$ and $O\in\calO_r$ so that the action $(O,B)\in \calO_r\times \calC_{p_1,p_2,r}\rightarrow BO\in\calC_{p_1,p_2,r}$ is well-defined. Suppose otherwise. Then there exists $(P,Q)\in GL_{p_1}\times GL_{p_2}$ such that $(Q^{-\top}\otimes P)\varphi_{p_1,p_2,r}(\tilde{X})=[\text{vec}(Y_1),\ldots,\text{vec}(Y_r)]=:Y$, where each $Y_i$ takes a non-trivial block diagonal form. Note that $\varphi_{p_1,p_2,r}(\tilde{X})=(Q^{\top} \otimes P^{-1})YO^\top$. Since any linear combination of $Y_i$ also has a non-trivial block diagonal form, this implies that $PX_iQ^{-1}$ is of non-trivial block-diagonal form, contradicing the indecomposability of $X$. Hence, $\tilde{X}$ is indecomposable.  

 Next, we claim that $C(J_1(A)^\top)\cap C(J_2(A)^\top)=\set{\bfzero_{pr}}$ for any $A\in\calD_{p_1,p_2,r}$.Adopting the notations in the proof of Proposition \ref{sec3.prop1}, this is equivalent to show that $\tilde{\Lambda}_V=\mathbf{0}_{p_1\times p_1}$ and $\tilde{\Lambda}_W=\mathbf{0}_{p_2\times p_2}$ defined in (\ref{sec3.prop1.eq4}). Recall that (\ref{sec3.prop1.eq3}) implies that $((\tilde{\Lambda}_V)_{jj}-(\tilde{\Lambda}_W)_{kk})(\tilde{A}_i)_{jk}=0$ for any $(j,k)\in[p_1]\times [p_2]$. If there exists $i\in[r]$ for which $(\tilde{A}_i)_{jk}$ is nonzero, we have that $(\tilde{\Lambda}_V)_{jj}=(\tilde{\Lambda}_W)_{kk}$ for any given $(j,k)$. Hence, if $G=(\set{s_i:i\in[p_1]}\sqcup \set{q_j:j\in [p_2]},E)$ is the undirected bipartite graph induced from $(\tilde{A}_1,\ldots,\tilde{A}_r)$ as in Proposition \ref{sec4.1.prop2} (take $P=I_{p_1},Q=I_{p_2}$), we have that whenever a vertex $s_j$ is connected to $q_k$, $(\tilde{\Lambda}_V)_{jj}=(\tilde{\Lambda}_W)_{kk}$.  If we identify $(\tilde{\Lambda}_V)_{jj}$ and $(\tilde{\Lambda}_W)_{kk}$ as $s_j$ and $q_k$, respectively,  this implies  that for any $(j,k)$ such that $(\tilde{A}_i)_{jk}$ is nonzero for at least one $i$, $(\tilde{\Lambda}_V)_{jj}$ and $(\tilde{\Lambda}_W)_{kk}$ are the same as some constant. Since the graph $G$ is connected by Proposition \ref{sec4.1.prop2}, this implies that $(\tilde{\Lambda}_V)_{jj}=(\tilde{\Lambda}_V)_{kk}$ for any $j,k$. Hence, $\tilde{\Lambda}_V=\mathbf{0}_{p_1\times p_1}$ and $\tilde{\Lambda}_W=\mathbf{0}_{p_2\times p_2}$ and thus the first item holds. 
\end{proof}

In view of the proof of Theorem \ref{sec4.2.thm1}, it is clear why the map $F_{p_1,p_2,r}$ fails to be a submersion on the set $\mathcal{U}$ in Example \ref{sec4.1.ex1}, as $C(J_1(A)^\top)\cap C(J_2(A)^\top)\neq \set{\bfzero_{pr}}$ for any $A\in\mathcal{U}$. Thus, the set $\calV_{p_1,p_2,r}$ is the singularity in sense of Sard's theorem.  

\begin{proof}[Proof of Example \ref{sec4.1.ex1}]
For fixed $A\in \mathcal{U}$, simply write $J_1:=J_1(A)$ and $J_2:=J_2(A)$. We  claim that there is a nonzero element in $C(J_1^\top)\cap C(J_2^\top)$. Take $V=W=c_1 I_2\oplus [c_2]$ for different constants $c_1$ and $c_2$. Then one can verify that $J_1^\top v=J_2^\top w$ for $v=\text{vec}(V)$ and $w=\text{vec}(W)$, and is nonzero. Hence, if $F_{p_1,p_2,r}$ is defined on $\calH_{3,3,3}$ as in (\ref{sec3.eq1}), it fails to be a submersion on the subset $\calD_{3,3,3}$ in (\ref{sec3.eq1}).    
\end{proof}

\begin{proof}[Proof of Proposition \ref{sec4.2.prop1}]
    Deduce from the proof of Theorem \ref{sec4.2.thm1} that $T_A\calD_{p_1,p_2,r}$ is the vectorization of $\varphi_{p_1,p_2,r}^{-1}(N(J(A)))$. Since $\varphi_{p_1,p_2,r}$ is a diffeomorphism and $\calD_{p_1,p_2,r}$ is embedded in $(\real^{p_1\times p_2})_*^r$, the form of $ T_{\tilde{A}}\calC_{p_1,p_2,r}$ follows from the differential of $\varphi_{p_1,p_2,r}$ on $\calD_{p_1,p_2,r}$. Also, the proof of Lemma \ref{sec3.lemma3} implies that the canonical projection $\pi:X\in \real_*^{p\times r}\rightarrow [X]\in\real_*^{p\times r}/\calO_r$ is a smooth submersion when it is restricted to $\calO_r-$invariant embedded submanifold of $\real_*^{p\times r}$. Hence, recalling the diffeomorphism $s_{p_1,p_2,r}$ in (\ref{sec3.eq1}), the map $\Phi_{p_1,p_2,r}\equiv s_{p_1,p_2,r}\circ \pi:X\in \real_*^{p\times r}\rightarrow XX^\top \in \calS_{p,r}^+$ is a smooth submersion when restricted to $\calC_{p_1,p_2,r}$. Thus, $T_{\tilde{A}\tilde{A}^\top}\calC_{p_1,p_2,r}^+\equiv d\Phi_{p_1,p_2,r}(\tilde{A})[T_{\tilde{A}}\calC_{p_1,p_2,r}]$.
\end{proof}

\subsection{Proofs of the results from Section \ref{sec5.1}}\label{append.A.5}

\begin{proof}[Proof of Lemma \ref{sec5.1.lemma1}]
     Define a curve $\tilde{\gamma}_K(t)=K+tU$ on $(-\epsilon,\epsilon)$ for some sufficiently small $\epsilon>0$ so that the curve lies on $\calS_{p_1,p_2}^{++}$. Since $h(\tilde{\gamma}_K(t))h(\tilde{\gamma}_K(t))^\top=\tilde{\gamma}_K(t)$, letting $R=dh(K)[U]=\frac{d}{dt}h(\tilde{\gamma}_K(t))\bigg|_{t=0}$, 
\begin{align}\label{sec5.1.lemma1.eq1}
\begin{split}
    &h(\tilde{\gamma}_K(0))\parentheses{\frac{d}{dt}h(\tilde{\gamma}_K(t))\bigg|_{t=0}}^\top+\parentheses{\frac{d}{dt}h(\tilde{\gamma}_K(t))\bigg|_{t=0}}h(\tilde{\gamma}_K(0))^\top=\frac{d}{dt}\tilde{\gamma}_K(t)\bigg|_{t=0},\\
  \Rightarrow &  h(K)R^\top+Rh(K)^\top=U.
\end{split}
\end{align}
Suppose $h\in\calL_{p_1,p_2}^{++}$ so that $R\in T_{L_2\otimes L_1}\calL_{p_1,p_2}^{++}$. Then we have that 
\begin{align*}
    [(L_2^{-1}\otimes L_1^{-1})R]^\top+(L_2^{-1}\otimes L_1^{-1})R=I_{p_2}\otimes L_1^{-1}U_1L_1^{-\top}+L_2^{-1}U_2L_2^{-\top}\otimes I_{p_1}. 
\end{align*}
Because $(L_2^{-1}\otimes L_1^{-1})R$ is lower triangular while $I_{p_2}\otimes L_1^{-1}U_1L_1^{-\top}+L_2^{-1}U_2L_2^{-\top}\otimes I_{p_1}$ is symmetric, following the proof of Proposition $4$ from \cite{lin2019} yields that 
\begin{align*}
&(L_2^{-1}\otimes L_1^{-1})R=\parentheses{I_{p_2}\otimes L_1^{-1}U_1L_1^{-\top}+L_2^{-1}U_2L_2^{-\top}\otimes I_{p_1}}_{\frac{1}{2}}\\
  \Rightarrow & R=(L_2\otimes L_1)\parentheses{I_{p_2}\otimes L_1^{-1}U_1L_1^{-\top}+L_2^{-1}U_2L_2^{-\top}\otimes I_{p_1}}_{\frac{1}{2}}.
\end{align*}
Now assume $h\in \calS_{p_1,p_2}^{++}$ so that $R\in T_{S_2\otimes S_1}\calS_{p_1,p_2}^{++}$ for $S_2\otimes S_1=h(\Sigma_2\otimes \Sigma_1)$. Replacing $L_2$ and $L_1$ with $S_2$ and $S_1$ in above, we have the Sylvester's equation as  
\begin{align*}
   (S_2\otimes S_1)R+R(S_2\otimes S_1)=\Sigma_2\otimes U_1+U_2\otimes \Sigma_1.
\end{align*}
We follow the standard approach in solving the equation above over symmetric matrices with coefficients being positive definite. After some algebra, we have that 
\begin{align*}
    (\Lambda_2^{1/2}\otimes \Lambda_1^{1/2})(\tilde{R}_2\otimes \tilde{R}_1)+(\tilde{R}_2\otimes \tilde{R}_1)(\Lambda_2^{1/2}\otimes \Lambda_1^{1/2})=\Lambda_2\otimes \Gamma_1^\top U_1\Gamma_1+\Gamma_2^\top U_2\Gamma_2\otimes \Lambda_1,
\end{align*}
where $\tilde{R}=(\Gamma_2\otimes \Gamma_1)^\top R(\Gamma_2\otimes \Gamma_1)$. Through entry-wise comparison of the matrices in the above equation, we see that 
\begin{align*}
    R=(\Gamma_2\otimes \Gamma_1)\left[\Lambda^{-}\circ \parentheses{\Lambda_2\otimes \Gamma_1^\top U_1\Gamma_1+\Gamma_2^\top U_2\Gamma_2\otimes \Lambda_1}\right](\Gamma_2\otimes \Gamma_1)^\top.
\end{align*}
\end{proof}

\begin{proof}[Proof of Proposition \ref{sec5.1.prop1}]
 Define two curves $\tilde{\gamma}_K(t)=K+tU$ and $\tilde{\gamma}_C(t)=C+tW$ on $(-\epsilon,\epsilon)$ for some sufficiently small $\epsilon>0$ so that each curve is living on the desired manifold. Suppose $\tilde{g}(t)=g(\tilde{\gamma}_K(t),\tilde{\gamma}_C(t))$ on $(-\epsilon,\epsilon)$. Then 
   \begin{align*}
 \tilde{g}(t)=h(\tilde{\gamma}_K(t))\tilde{\gamma}_C(t)h(\tilde{\gamma}_K(t))^\top.
   \end{align*}
   Thus,
   \begin{align*}
       \frac{d\tilde{g}}{dt}=h(\tilde{\gamma}_K(t))\frac{d\tilde{\gamma}_C}{dt}h(\tilde{\gamma}_K(t))^\top+\frac{d}{dt}h(\tilde{\gamma}_K(t)) \tilde{\gamma}_C(t)h(\tilde{\gamma}_K(t))^\top+h(\tilde{\gamma}_K(t))\tilde{\gamma}_C(t)\frac{d}{dt}h(\tilde{\gamma}_K(t))^\top
   \end{align*}
and so
\begin{align*}
    \frac{d\tilde{g}}{dt}\bigg|_{t=0}&=dg(K,C)[U,W] \\
    &=h(K) W h(K)^\top+dh(K)[U] Ch(K)^\top+h(K)C(dh(K)[U])^\top.
\end{align*}
Here $dh(K)[U]$ is given in Lemma \ref{sec5.1.lemma1}.
\end{proof}

To prove Proposition \ref{sec5.1.prop2}, the following lemma is useful. 
\begin{lemma}\label{append.A.5.lemma1}
Suppose $A\in\calS_{p_1}$, $B\in\calS_{p_2}$, and $C\in\real^{p\times p}$. Then the followings are true:
\begin{align*}
    \tr\parentheses{C(B\otimes I_{p_1})}&=\tr\parentheses{([\tr_2(\sym(C))/p_1]\otimes I_{p_1})(B\otimes I_{p_1})}=\tr\parentheses{\tr_2(\sym(C))B},\\
      \tr\parentheses{C(I_{p_2}\otimes A)}&=\tr\parentheses{\left[(I_{p_2}\otimes \tr_1(\sym(C))/p_2)\right](I_{p_2}\otimes A)}=\tr\parentheses{\tr_1(\sym(C))A}.
\end{align*}
\end{lemma}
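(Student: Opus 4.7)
The plan is to exploit the block structure by partitioning $C\in\real^{p\times p}$ into $p_1\times p_1$ blocks $(C_{[i,j]})$ indexed by $(i,j)\in[p_2]\times[p_2]$. Since $B\otimes I_{p_1}$ has $(i,j)$-th block $b_{ij}I_{p_1}$ and $I_{p_2}\otimes A$ has $(i,j)$-th block $\delta_{ij}A$, a direct block-multiplication computation will reduce the two left-hand sides to $\sum_{i,j}b_{ji}\tr\parentheses{C_{[i,j]}}$ and $\tr\parentheses{\tr_1(C)A}$ respectively, using the very definition of $\tr_1$ and the observation that the $(i,j)$-th entry of $\tr_2(C)$ is $\tr\parentheses{C_{[i,j]}}$ once the partial traces are extended componentwise to all of $\real^{p\times p}$.

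Next I will verify the auxiliary identity $\tr_i\parentheses{\sym(C)}=\sym\parentheses{\tr_i(C)}$ for $i=1,2$. This is immediate from $[\sym(C)]_{[i,j]}=(C_{[i,j]}+C_{[j,i]}^\top)/2$ and taking its ordinary trace. Combined with the elementary fact that $\tr(MY)=\tr(\sym(M)Y)$ whenever $Y$ is symmetric, the symmetry of $B$ and $A$ then lets me freely replace $\tr_2(C)$ by $\tr_2(\sym(C))$ and $\tr_1(C)$ by $\tr_1(\sym(C))$ inside the traces computed above, delivering the rightmost equality in each of the two lines.

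The remaining middle equalities are purely formal consequences of the mixed-product rule $(X\otimes I_{p_1})(B\otimes I_{p_1})=(XB)\otimes I_{p_1}$ together with $\tr(M\otimes I_k)=k\,\tr(M)$. Applied to $X:=\tr_2(\sym(C))/p_1$ (respectively $Y:=\tr_1(\sym(C))/p_2$), the inserted factors of $p_1$ (respectively $p_2$) will cancel and the parenthesized expressions will collapse back to the rightmost form. There is no genuine obstacle to the argument; the only point that deserves attention is the bookkeeping of transposed indices that get absorbed by the symmetry of $A$ and $B$, which is precisely why the lemma is stated in terms of $\sym(C)$ rather than $C$ itself, since $\tr_1$ and $\tr_2$ were originally defined on $\calS_p$ and the general $C$ allowed here need not be symmetric.
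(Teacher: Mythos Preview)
Your proposal is correct. The block-partition computation you outline indeed yields $\tr\parentheses{C(B\otimes I_{p_1})}=\sum_{i,j}b_{ji}\tr(C_{[i,j]})=\tr\parentheses{\tr_2(C)B}$ and $\tr\parentheses{C(I_{p_2}\otimes A)}=\tr\parentheses{\tr_1(C)A}$, and your identity $\tr_i(\sym(C))=\sym(\tr_i(C))$ together with $\tr(MY)=\tr(\sym(M)Y)$ for symmetric $Y$ handles the passage to $\sym(C)$; the middle equalities follow from the mixed-product rule and $\tr(M\otimes I_k)=k\,\tr(M)$ exactly as you say.

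The paper's own proof is a one-line appeal to the fact $\tr(CA)=\tr(\sym(C)A)$ for symmetric $A$ together with a citation to an external result (Proposition~1.3 of \cite{simonis2025}), which presumably packages the partial-trace identities you derive by hand. Your argument is therefore not a different route so much as a self-contained unpacking of what the paper outsources; it has the advantage of making the extension of $\tr_1,\tr_2$ from $\calS_p$ to all of $\real^{p\times p}$ explicit, which is the one subtlety here since the paper's notation section defines these operators only on symmetric inputs.
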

\begin{proof}
This is a direct consequence of the fact that $\tr(CA)=\tr(\sym(C)A)$ for any square matrix $C$ and a symmetric $A$, and Proposition $1.3$ from \cite{simonis2025}.     
\end{proof}

\begin{proof}[Proof of Proposition \ref{sec5.1.prop2}]
Note that $\Theta^{1/2}$ denotes the symmetric square root of $\Theta\in\calS_p^{++}$ throughout the proof. Provided that the Kronecker map $k$ is smooth, the differential of the core map $c$ follows from that of the Kronecker map $k$. To see this, take $t\in(-\epsilon,\epsilon)$ for sufficiently small $\epsilon>0$ so that a curve $\mu(t)=\Sigma+tV\in \calS_p^{++}$ for any such $t$. If the map $k$ is smooth, because $h(k(\mu(t)))h(k(\mu(t)))^\top=k(\mu(t))$, the analogy to (\ref{sec5.1.lemma1.eq1}) and the chain rule yield that  
\begin{align*}
    h(k(\Sigma))U^\top+Uh(k(\Sigma))^\top=dk(\Sigma)[V],
\end{align*}
where $U=dh(k(\Sigma))[dk(\Sigma)[V]]$. Because $k(\Sigma)\in\calS_{p_1,p_2}^{++}$ and $dk(\Sigma)[V]\in T_{k(\Sigma)}\calS_{p_1,p_2}^{++}$, $U$ can be computed using Lemma \ref{sec5.1.lemma1} after computing $dk(\Sigma)[V]$. Then the differential of the core map $c$ is given as  
\begin{align*}
    dc(\Sigma)[V]&\equiv\frac{d}{dt}\bigg|_{t=0}h(k(\mu(t)))^{-1}\mu(t)h(k(\mu(t)))^{-\top}\\
    &=h(k(\Sigma))^{-1}Vh(k(\Sigma))^{-\top}-h(k(\Sigma))^{-1}U h(k(\Sigma))^{-1}\Sigma h(k(\Sigma))^{-\top}\\
    &-h(k(\Sigma))^{-1}\Sigma h(k(\Sigma))^{-\top}U^\top  h(k(\Sigma))^{-\top}\\
    &=h(k(\Sigma))^{-1}Vh(k(\Sigma))^{-\top}-h(k(\Sigma))^{-1}UC-CU^\top  h(k(\Sigma))^{-\top},
\end{align*}
concluding the claim.

Now we prove the smoothness of $k$ as outlined in the preceding discussion of this proposition. To this end, we prove the strict geodesically convexity of the map $\eta_{p_1,p_2}$ over $\calE$. Take $\Omega=(\Omega_1,\Omega_2)\in\calE$ and a tangent vector $W=(W_1,W_2)\in T_\Omega\calE$. Then the geodesic $\gamma:[0,1]\rightarrow \calE$ emanating from $\Omega$ in the direction of $W$ is given by
\begin{align*}
   \gamma(t)&:=(\gamma_1(t),\gamma_2(t))\\
&=\parentheses{\Omega_1^{1/2}\exp\parentheses{t\Omega_1^{-1/2}W_1\Omega_1^{-1/2}}\Omega_1^{1/2},\Omega_2^{1/2}\exp\parentheses{t\Omega_2^{-1/2}W_2\Omega_2^{-1/2}}\Omega_2^{1/2}}
\end{align*}
For given $\Sigma\in\calS_p^{++}$, let $\theta(\cdot|\Sigma)=\ell(\gamma(\cdot)|\Sigma)$, where 
\begin{align}\label{sec5.1.prop2.eq1}
    \ell(\cdot|\Sigma):(K_1,K_2)\in \calE\mapsto \tr\parentheses{(K_2\otimes K_1)^{-1}\Sigma}+p_1\log |K_2|\in\real. 
\end{align}
Note that the map $\theta(\cdot|\Sigma)$ is smooth over $[0,1]$. By direct computations, for $\tilde{\gamma}(t)=\gamma_2(t)\otimes\gamma_1(t)$,   
\begin{align}\label{sec5.1.prop2.eq2}
\begin{split}
\theta(t|\Sigma)&=\ell(\gamma_1(t),\gamma_2(t)|\Sigma)=\tr\parentheses{\Sigma[\tilde{\gamma}(t)]^{-1}}+tp_1\tr\parentheses{\Omega_2^{-1/2}W_2\Omega_2^{-1/2}}+p_1\log|\Omega_2|,\\
    \theta'(t|\Sigma)&=-\tr\parentheses{\Sigma[\tilde{\gamma}(t)]^{-1}\tilde{\gamma}'(t)[\tilde{\gamma}(t)]^{-1}}+p_1\tr\parentheses{\Omega_2^{-1/2}W_2\Omega_2^{-1/2}},\\
    \theta''(t|\Sigma)&=2\tr\parentheses{\Sigma[\tilde{\gamma}(t)]^{-1}\tilde{\gamma}'(t)[\tilde{\gamma}(t)]^{-1}\tilde{\gamma}'(t)[\tilde{\gamma}(t)]^{-1}}-\tr\parentheses{\Sigma[\tilde{\gamma}(t)]^{-1}\tilde{\gamma}''(t)[\tilde{\gamma}(t)]^{-1}},
\end{split}
\end{align}
where $t\in(0,1)$. Using the facts that $\tau'(t)\tau(t)=\tau(t)\tau'(t)$ for
\begin{align*}
    \tau(t)=(\Omega_2\otimes\Omega_1)^{-1/2}\tilde{\gamma}(t)(\Omega_2\otimes\Omega_1)^{-1/2},
\end{align*}
observe that $\tilde{\gamma}''(t)=\tilde{\gamma}'(t)[\tilde{\gamma}(t)]^{-1}\tilde{\gamma}'(t)$. Hence,
\begin{align*}
    \theta''(t|\Sigma)&=\tr\parentheses{\Sigma[\tilde{\gamma}(t)]^{-1}\tilde{\gamma}'(t)[\tilde{\gamma}(t)]^{-1}\tilde{\gamma}'(t)[\tilde{\gamma}(t)]^{-1}}\\
    &=\tr\parentheses{[\tilde{\gamma}(t)]^{-1/2}\tilde{\gamma}'(t)[\tilde{\gamma}(t)]^{-1}\Sigma[\tilde{\gamma}(t)]^{-1}\tilde{\gamma}'(t)[\tilde{\gamma}(t)]^{-1/2}}\geq 0
\end{align*}
for any $t\in(0,1)$. Since both $\tilde{\gamma}(t)$ and $\Sigma$ are strictly positive definite, the equality in the inequality above holds only when $\tilde{\gamma}'(t)=0$. Noting that 
\begin{align*}
    \tilde{\gamma}'(t)&=(\Omega_2^{1/2}\otimes \Omega_1^{1/2})([\Omega_2^{-1/2}W_2\Omega_2^{-1/2}]\otimes I_{p_1}\\
    &+I_{p_2}\otimes [\Omega_1^{-1/2}W_1\Omega_1^{-1/2}])(\Omega_2^{-1/2}\otimes \Omega_1^{-1/2})\tilde{\gamma}(t),
\end{align*}
this can happen only when $[\Omega_2^{-1/2}W_2\Omega_2^{-1/2}]\otimes I_{p_1}+I_{p_2}\otimes [\Omega_1^{-1/2}W_1\Omega_1^{-1/2}]=0$, which holds only when 
\begin{align*}
    \Omega_2^{-1/2}W_2\Omega_2^{-1/2}=\alpha I_{p_2},\quad \Omega_1^{-1/2}W_1\Omega_1^{-1/2}=-\alpha I_{p_1}
\end{align*}
for some constant $\alpha$. Since $\tr\parentheses{\Omega_1^{-1}W_1}=0$, the above implies that $\alpha=0$ so that $\gamma(t)=(\Omega_1,\Omega_2)$, a trivial geodesic. Thus, whenever $\gamma$ is a non-trivial geodesic, we have that $\theta''(t|\Sigma)>0$ for all $t\in(0,1)$, implying that $\theta(\cdot|\Sigma)$ is strictly convex. Thus, the smooth map $\ell(\cdot|\Sigma)$ defined in (\ref{sec5.1.prop2.eq1}) is strictly geodesically convex on $\calE$. Per the preceding discussion of Proposition \ref{sec5.1.prop2},
\begin{align*}
    \eta_{p_1,p_2}(\Sigma)=(\Sigma_1,\Sigma_2)=\argmin_{(\Omega_1,\Omega_2)\in \calE} \ell(\Omega_1,\Omega_2|\Sigma).
\end{align*}
By the uniqueness of the minimizer $\ell(\cdot|\Sigma)$ over $\xi$, $(\Sigma_1,\Sigma_2)$ is the unique solution to the equation 
\begin{align*}
    \grad \ell(\Omega_1,\Omega_2|\Sigma)=0
\end{align*}
in $\Omega=(\Omega_1,\Omega_2)\in\calE$. Letting $m(\Omega,\Sigma):=\grad \ell(\Omega_1,\Omega_2|\Sigma)$, it is clear that $m$ is a smooth map on $\calE\times \calS_p^{++}$. Moreover, since the operator $\Hess \ell(\Omega_1,\Omega|\Sigma)$ is invertible as $\ell$ is strictly geodesically convex,  the manifold  implicit function theorem (\cite{loomis2014}, Section $3.11$) implies that $\eta_{p_1,p_2}$ is smooth in $\Sigma$ and its differential $d\eta_{p_1,p_2}(\Sigma)[V]$ can be computed as
\begin{align}\label{sec5.1.prop2.eq3}
    d\eta_{p_1,p_2}(\Sigma)[V]=-\Hess^{-1} \ell(\Sigma_1,\Sigma_2|\Sigma)\left[\frac{d}{dt}\bigg|_{t=0}m((\Sigma_1,\Sigma_2),\Sigma+tV)\right].
\end{align}
By (\ref{sec5.1.eq1}), the Kronecker map $k=\psi_{p_1,p_2}^{-1}\circ \eta_{p_1,p_2}$ is also smooth and its differential can be derived according to (\ref{sec5.1.eq2}) and (\ref{sec5.1.prop2.eq3}).

It remains to derive the differential of $\eta_{p_1,p_2}$. Take $\Omega=(\Sigma_1,\Sigma_2)$ and $W=(W_1,W_2)\in T_{(\Sigma_1,\Sigma_2)}\calE$. With $\tilde{W}=\Sigma_2\otimes W_1+W_2\otimes \Sigma_1$, 
\begin{align*}
    \theta''(0|\Sigma)&=\Hess\ell(\Sigma_1,\Sigma_2)[W,W]\\
    &=\tr\parentheses{ (\Sigma_2^{-1/2}\otimes \Sigma_1^{-1/2})\tilde{W}(\Sigma_2^{-1/2}\otimes \Sigma_1^{-1/2}
)C(\Sigma_2^{-1/2}\otimes \Sigma_1^{-1/2})\tilde{W}(\Sigma_2^{-1/2}\otimes \Sigma_1^{-1/2})}.
\end{align*}
Here if $h\in \calL_{p_1,p_2}^{++}$, simply replace $C$ with $(O_2\otimes O_1)C(O_2\otimes O_1)^\top\in\calC_{p_1,p_2}^{++}$ for $O_2\otimes O_1=(\Sigma_2^{-1/2}\otimes \Sigma_1^{-1/2})(L_2\otimes L_1)\in\calO_{p_1,p_2}$ for $L_2\otimes L_1=\calL(\Sigma)$. Choose any tangent vector $Y=(Y_1,Y_2)\in T_{(\Sigma_1,\Sigma_2)}\calE$ and let $    \tilde{Y}=\Sigma_2\otimes Y_1+Y_2\otimes \Sigma_1$. By polarization, if $M=(\Sigma_2^{-1/2}\otimes \Sigma_1^{-1/2})\tilde{W}(\Sigma_2^{-1/2}\otimes \Sigma_1^{-1/2}
)C$, 
\begin{align*}
    \Hess\ell(\Sigma_1,\Sigma_2)[W,Y]&=\tr\parentheses{ M(\Sigma_2^{-1/2}\otimes \Sigma_1^{-1/2})\tilde{Y}(\Sigma_2^{-1/2}\otimes \Sigma_1^{-1/2})}\\
&=\tr\parentheses{M(I_{p_2}\otimes\Sigma_1^{-1/2}Y_1\Sigma_1^{-1/2})}+\tr\parentheses{M(\Sigma_2^{-1/2}Y_2\Sigma_2^{-1/2}\otimes I_{p_1})}\\
&=\tr\parentheses{\tr_1(\sym(M))\Sigma_1^{-1/2}Y_1\Sigma_1^{-1/2}}\\
&+\tr\parentheses{\tr_2(\sym(M))\Sigma_2^{-1/2}Y_2\Sigma_2^{-1/2}}.
\end{align*}
where the last equality follows from Lemma \ref{append.A.5.lemma1}. By (\ref{sec2.2.eq1}),  $\Hess\ell(\Sigma_1,\Sigma_2)[W]=(X_1,X_2)=:X\in T_{(\Sigma_1,\Sigma_2)}\calE$ is a unique tangent vector such that 
\begin{align*}
    \Hess\ell(\Sigma_1,\Sigma_2)[W,Y]&=\tilde{g}^\AI (X,Y)=\tilde{g}_1^\AI (X_1,Y_1)+\tilde{g}_2^\AI (X_2,Y_2).
\end{align*}
To identify $X$, observe that 
\begin{align*}
    \sym(M)&=\underbrace{\left[(I_{p_2}\otimes \Sigma_1^{-1/2}W_1\Sigma_1^{-1/2})C+C(I_{p_2}\otimes \Sigma_1^{-1/2}W_1\Sigma_1^{-1/2})\right]/2}_{M_1(W_1)}\\
    &+\underbrace{\left[(\Sigma_2^{-1/2}W_2\Sigma_2^{-1/2}\otimes I_{p_1})C+C(\Sigma_2^{-1/2}W_2\Sigma_2^{-1/2}\otimes I_{p_1})\right]/2}_{M_2(W_2)}.
\end{align*}
Recalling that $\tr_1(C)=p_2I_{p_1}$ and $\tr_2(C)=p_1 I_{p_2}$, applying Lemma \ref{append.A.5.lemma1} yields that 
\begin{align*}
 \tr_1\parentheses{M_1(W_1)}&=p_2 \Sigma_1^{-1/2}W_1\Sigma_1^{-1/2},\; \tr_2\parentheses{M_2(W_2)}=p_1 \Sigma_2^{-1/2}W_2\Sigma_2^{-1/2},\\
\end{align*}
 and also 
 \begin{align*}
 \tr_1\parentheses{M_2(W_2)}&=\sum_{i,j=1}^{p_2}(\Sigma_2^{-1/2}W_2\Sigma_2^{-1/2})_{ij} C_{[j,i]},\,
 [\tr_2(M_1(W_1))]_{ij}\\
 &=\tr\parentheses{C_{[i,j]}\Sigma_1^{-1/2}W_1\Sigma_1^{-1/2}}
\end{align*}
for $i,j\in [p_2]$. Hence,
\begin{align*}
    \Hess\ell(\Sigma_1,\Sigma_2)[W,Y]&=p_2\tr\parentheses{\Sigma_1^{-1}W_1\Sigma_1^{-1}Y_1}+p_1\tr\parentheses{\Sigma_2^{-1}W_2\Sigma_2^{-1}Y_2}+p_2\tr\parentheses{\Sigma_1^{-1}Y_1\Sigma_1^{-1}\Sigma_1^{1/2}\tr_1(M_2(W_2))\Sigma_1^{1/2}/p_2}\\
    &+p_1\tr\parentheses{\Sigma_2^{-1}Y_2\Sigma_2^{-1}\Sigma_2^{1/2}\tr_2(M_1(W_1))\Sigma_2^{1/2}/p_1}\\
    &=\tilde{g}_1^{\AI}(X_1,Y_1)+\tilde{g}_2^{\AI}(X_2,Y_2).
\end{align*}
By (32) of \cite{simonis2025}, 
\begin{align*}
  \calP_{\Sigma_1}(\Sigma_1^{1/2}\tr_1(M_2(W_2))\Sigma_1^{1/2})\equiv   \Sigma_1^{1/2}\tr_1(M_2(W_2))\Sigma_1^{1/2}-\tr\parentheses{\Sigma_2^{-1/2}W_2\Sigma_2^{-1/2}}\Sigma_1
\end{align*}
for the operator $\calP$ defined in (\ref{sec2.2.1.eq1}). Therefore,
\begin{align}\label{sec5.1.prop1.eq4}
\begin{split}
        X_1&=W_1+\Sigma_1^{1/2}\tr_1(M_2(W_2))\Sigma_1^{1/2}/p_2-\tr\parentheses{\Sigma_2^{-1/2}W_2\Sigma_2^{-1/2}}\Sigma_1/p_2,\\
    X_2&=W_2+\Sigma_2^{1/2}\tr_2(M_1(W_1))\Sigma_2^{1/2}/p_1,
\end{split}
\end{align}
 we have that the operator $\calR_C$ that maps $T_{(\Sigma_1,\Sigma_2)}\calE$ to itself, as    
 \begin{align*}
     \calR_C(W):=\Hess\ell(\Sigma_1,\Sigma_2)[W]=(X_1,X_2)
 \end{align*}
for $(X_1,X_2)$  depending on $(W_1,W_2)$ defined in (\ref{sec5.1.prop1.eq4}), is invertible as it is the Riemannian Hessian operator of a strictly geodesically convex map. In particular, if $C=I_p$, the argument above yields that 
\begin{align*}
 \calR_C(W)\equiv \text{Id}(W)=(W_1,W_2).
\end{align*}

It remains to compute $m((\Sigma_1,\Sigma_2),V)$. Note that $m((\Sigma_1,\Sigma_2),V)$ is the unique tangent vector $(V_1,V_2)\in T_{(\Sigma_1,\Sigma_2)}\calE$ such that 
\begin{align*}
\frac{d}{ds}\bigg|_{s=0}\theta'(0|\Sigma+sV)&=\tilde{g}^{\AI}((W_1,W_2),(V_1,V_2))\\
&=-\tr\parentheses{V(\Sigma_2^{-1/2}W_2\Sigma_2^{-1/2}\otimes I_{p_1})}-\tr\parentheses{V(I_{p_2}\otimes \Sigma_1^{-1/2}W_1\Sigma_1^{-1/2})}\\
&=:-(\rom{1})-(\rom{2})
\end{align*}
for any $(W_1,W_2)\in T_{(\Sigma_1,\Sigma_2)}\calE$. Here $\theta'(t|\Sigma)$ is that defined in (\ref{sec5.1.prop2.eq2}) but with $(\Omega_1,\Omega_2)=(\Sigma_1,\Sigma_2)$. By Lemma \ref{append.A.5.lemma1}, 
\begin{align*}
(\rom{1})&=p_1\tr\parentheses{\Sigma_2^{-1}W_2\Sigma_2^{-1}\left[\Sigma_2^{1/2}(\tr_2(V)/p_1)/\Sigma_2^{1/2}\right]},\\
(\rom{2})&=p_2\tr\parentheses{\Sigma_1^{-1}W_1\Sigma_1^{-1}\left[\Sigma_1^{1/2}(\tr_1(V)/p_2)\Sigma_1^{1/2}\right]}.
\end{align*}
Again by (32) of \cite{simonis2025}, we have that 
\begin{align*}
    \tilde{g}^\AI ((W_1,W_2),(V_1,V_2))&=\tilde{g}_1^\AI(W_1,V_1)+\tilde{g}_2^\AI(W_2,V_2)\\
    &=\tilde{g}_1^\AI \parentheses{-\Sigma_1^{1/2}\parentheses{\tr_1(V)/p_2-\frac{\tr(V)}{p}I_{p_1}}\Sigma_1^{1/2},W_1}\\
    &+\tilde{g}_2^\AI \parentheses{-\Sigma_2^{1/2}\tr_2(V)\Sigma_2^{1/2}/p_1,W_2},
\end{align*}
implying that 
\begin{align*}
    (V_1,V_2)=\parentheses{-\Sigma_1^{1/2}\parentheses{\tr_1(V)/p_2-\frac{\tr(V)}{p}I_{p_1}}\Sigma_1^{1/2},-\Sigma_2^{1/2}\tr_2(V)\Sigma_2^{1/2}/p_1}.
\end{align*}
Therefore, if $(U_1,U_2)=d\eta_{p_1,p_2}(\Sigma)[V]\in T_{(\Sigma_1,\Sigma_2)}\calE$, we have that 
\begin{align*}
\calR_C(U_1,U_2)=-\parentheses{V_1,V_2},
\end{align*}
which admits a unique solution $(U_1,U_2)$ as $\calR_C:T_{(\Sigma_1,\Sigma_2)}\calE\mapsto T_{(\Sigma_1,\Sigma_2)}\calE$ is a bijection. For such $(U_1,U_2)$, by (\ref{sec5.1.eq2}), we have that 
\begin{align*}
    dk(\Sigma)[V]=U_2\otimes \Sigma_1+\Sigma_2\otimes U_1. 
\end{align*}
Again, if $C=I_p$, we have that $(U_1,U_2)=-(V_1,V_2)$, concluding the proof.
\end{proof}

\subsection{Proofs of the results from Section \ref{sec5.2}}\label{append.A.6}
\vspace{0.3cm}

\begin{proof}[Proof of Lemma \ref{sec5.2.lemma1}]
We first verify that both $\tr_1(\calG(V))$ and $\tr_2(\calG(V))$ are zero matrices. 
Note that for any symmetric $U_i\in\calS_{p_i}$,
\begin{align}
    \tr_1(U_2\otimes I_{p_1})&=\tr(U_2)I_{p_1},\quad \tr_2(I_{p_2}\otimes U_1)=\tr(U_1)I_{p_2}.
\end{align}
Also, $\tr(\tr_1(V))=\tr(\tr_2(V))=\tr(V)$. Thus, by Lemma \ref{append.A.5.lemma1},
\begin{align*}
    \tr_1(\calG(V))&=\tr_1(V)-\tr_1(V)-\frac{\tr(V)}{p_1}I_{p_1}+\frac{\tr(V)}{p_1}I_{p_1}=\bfzero_{p_1\times p_1},\\
   \tr_2(\calG(V)) &=\tr_2(V)-\frac{\tr(V)}{p_2}I_{p_2}-\tr_2(V)+\frac{\tr(V)}{p_2}I_{p_2}=\bfzero_{p_2\times p_2}. 
\end{align*}
Next, we verify that for any $W\in T_C\calC_{p_1,p_2}^{++}$ and $V\in \calS_p$, $W$ and $V-\calG(V)$ are orthogonal under the metric $g^E$. Again by Lemma \ref{append.A.5.lemma1}, 
\begin{align*}
    g^E(W,V-\calG(V))&=\tr\parentheses{W(\tr_2(V)\otimes I_{p_1})}/p_1+\tr\parentheses{W(I_{p_2}\otimes \tr_1(V))}/p_2-\tr(V)\tr\parentheses{W}/p\\
    &=\tr(\tr_2(W)\tr_2(V))/p_1+\tr(\tr_1(W)\tr_1(V))/p_2-\tr(V)\tr(W)/p\\
    &=0,
\end{align*}
where the last equality holds because $\tr_i(W)=\bfzero_{p_i\times p_i}$ and $\tr(W)=0$.
\end{proof}
\\

\begin{proof}[Proof of Proposition \ref{sec5.2.prop1}]
By (3.37) of \cite{absil2008}, $\grad f(C)=\calG(\nabla f(C))$. Also, by (5.37) of \cite{absil2008},  
\begin{align*}
g^E(\Hess f(C)[V],W)=\tr\parentheses{\Hess f(C)[V] W}=\tr\parentheses{\nabla^2f(C)[V] W}.
\end{align*}
for any $W,V\in T_C\calC_{p_1,p_2}^{++}$. Again by Lemma \ref{sec5.2.lemma1},  $\Hess f(C)[V]=\calG(\nabla^2f(C)[V])$. 
\end{proof}

\subsection{Proofs of the results from Section \ref{sec5.3}}\label{append.A.7}
\vspace{0.3cm}

\begin{proof}[Proof of Lemma \ref{sec5.3.lemma1}]
From the proof of Theorem \ref{sec4.2.thm1}, one can deduce that $U\in T_A\calC_{p_1,p_2,r}$ if and only if $u:=\text{vec}(U)\in N(J(\tilde{A}))$ for the linear operator $J$ defined in (\ref{sec4.2.eq1}) and $\tilde{A}=\varphi_{p_1,p_2,r}^{-1}(A)$. Note that $I-J(\tilde{A})^\dagger J(\tilde{A})$ is an orthogonal projection onto $N(J(\tilde{A}))$. Hence, for any $V\in T_A\real_*^{p\times r}\equiv \real^{p\times r}$, with $v:=\text{vec}(V)$,
\begin{align*}
    g^E(V,U)=\tr(V^\top U)=v^\top u=v^\top (I-J(\tilde{A})^\dagger J(\tilde{A}))^\top u
\end{align*}
as $u\in N(J(\tilde{A}))$. Thus, taking $W=\textsf{mat}_{p\times r}( (I-J(\tilde{A})^\dagger J(\tilde{A}))v)$, we see that $W$ is an orthogonal projection of $V$ onto $T_A\calC_{p_1,p_2,r}$.
\end{proof}

\begin{proof}[Proof of Proposition \ref{sec5.3.prop1}]
  As an analogy to the proof of Proposition \ref{sec5.2.prop1}, using the result of Lemma \ref{sec5.3.lemma1} and (3.37) of \cite{absil2008}, the Riemannian gradient is obvious. To derive the Riemannian Hessian operator, by (5.37) of \cite{absil2008}, one can observe that $\text{vec}(\Hess f(A)[V])$ is an orthogonal projection of $ \frac{d}{dt}  \text{vec}(\grad f(A+tV))|_{t=0}$ onto $N(J(\tilde{A}))$. Suppose $\Omega$ is an open subset of $\real^{a\times b}$ and let $Q$ be a smooth function on $\Omega$ taking values in $\real^{m\times n}$ such that $Q(X)$ has a constant rank across $X \in \Omega$. Given $Z\in\Omega$, write $\calP(Z):=Z^\dagger$. By Theorem $4.3$ of \cite{golub1973}, if $Y\in \real^{a\times b}$,
  \begin{align}\label{sec5.3.prop1.eq1}
  \begin{split}
           dQ(X)^\dagger [Y]&=-Q(X)^\dagger(dQ(X)[Y])Q(X)^\dagger\\
      &+Q(X)^\dagger Q(X)(dQ(X)[Y])^\top (I-Q(X)Q(X)^\dagger)\\
      &+(I-Q(X)^\dagger Q(X))(dQ(X)[Y])^\top (Q(X)^\dagger)^\top  Q(X)^\dagger.
  \end{split}
  \end{align}
  By Theorem \ref{sec4.2.thm1}, $J$ has a constant rank over $\calD_{p_1,p_2,r}$. Also, recall that $J$ is a linear operator. Thus, by the chain rule,
\begin{align}\label{sec5.3.prop1.eq2}
\begin{split}
     \frac{d}{dt}  \text{vec}(\grad f(A+tV))\bigg|_{t=0}&=(I-J(\tilde{A})^\dagger J(\tilde{A}))\text{vec}(\nabla^2 f(A)[V])\\
 &-J(\tilde{A})^\dagger J(\tilde{V}) \text{vec}(\nabla f(A))\\
 &-d\calP(J(\tilde{A}))[J(\tilde{V})]J(\tilde{A})\text{vec}(\nabla f(A)).
\end{split}
\end{align}
Noting that $J(\tilde{A})^\dagger J(\tilde{A})J(\tilde{A})^\dagger=J(\tilde{A})^\dagger$ and $J(\tilde{A})J(\tilde{A})^\dagger J(\tilde{A})=J(\tilde{A})$, by (\ref{sec5.3.prop1.eq1}),
\begin{align}\label{sec5.3.prop1.eq3}
\begin{split}
        &(I-J(\tilde{A})^\dagger J(\tilde{A}))(d\calP(J(\tilde{A}))[J(\tilde{V})])\text{vec}(\nabla f(A))\\
    =&(I-J(\tilde{A})^\dagger J(\tilde{A}))(J(\tilde{V}))^\top (J(\tilde{A})^\dagger)^\top J(\tilde{A})^\dagger J(\tilde{A})\text{vec}(\nabla f(A)).
\end{split}
\end{align}
Also, 
\begin{align}\label{sec5.3.prop1.eq4}
    \begin{split}
        &(I-J(\tilde{A})^\dagger J(\tilde{A}))(I-J(\tilde{A})^\dagger J(\tilde{A}))\text{vec}(\nabla^2 f(A)[V])=(I-J(\tilde{A})^\dagger J(\tilde{A}))\text{vec}(\nabla^2 f(A)[V]),\\
        &(I-J(\tilde{A})^\dagger J(\tilde{A}))J(\tilde{A})^\dagger J(\tilde{V}) \text{vec}(\nabla f(A))=0.
    \end{split}
\end{align}
Combining (\ref{sec5.3.prop1.eq2})--(\ref{sec5.3.prop1.eq4}), 
\begin{align*}
    \text{vec}(\Hess f(A)[V])&=(I-J(\tilde{A})^\dagger J(\tilde{A}))\frac{d}{dt}  \text{vec}(\grad f(A+tV))\bigg|_{t=0}\\
    &=(I-J(\tilde{A})^\dagger J(\tilde{A}))\text{vec}(\nabla^2 f(A)[V])\\
    &-(I-J(\tilde{A})^\dagger J(\tilde{A}))(J(\tilde{V}))^\top (J(\tilde{A})^\dagger)^\top J(\tilde{A})^\dagger J(\tilde{A})\text{vec}(\nabla f(A)).
\end{align*}
\end{proof}

\subsection{Proofs of the results from Section \ref{sec6}}

\begin{proof}[Proof of Proposition \ref{sec6.prop1}]
Since if part is obvious, we prove the only if part. Suppose $\Omega(\tau_1)=\Omega(\tau_2)$. Then 
\begin{align*}
    k(\Omega(\tau_1))&=k(\Omega(\tau_2))=\bar{K}^1(\bar{K}^1)^\top=\bar{K}^2(\bar{K}^2)^\top,\\
    c(\Omega(\tau_1))&=c(\Omega(\tau_2))=(1-\lambda^1)A^1(A^1)^\top+\lambda^1 I_p=(1-\lambda^2)A^2(A^2)^\top+\lambda^2 I_p.
\end{align*}
Here $\bar{K}^i=\nu^i(\bar{K}_2^i\otimes \bar{K}_1^i)$. Since the square root map $h$ associated with the maps $k$ and $c$ is bijective, we have that $\bar{K}^1=\bar{K}^2$ and under the unit determinant constraint, $(\bar{K}_1^i,\bar{K}_2^i,\nu^i)$ is identifiable. Lastly, comparing the non-spiked eigenvalues of the core, we have that $\lambda^1=\lambda^2$ and so $A^1(A^1)^\top=(A^2)(A^2)^\top$, implying that $A^1=A^2O$ for some $O\in\calO_r$. From the proof of Theorem \ref{sec4.2.thm1}, the smooth action of $\calO_r$ on $\calC_{p_1,p_2,r}$ via the right matrix multiplication is well-defined, concluding the proof.
\end{proof}

\section{Formulas of Euclidean derivative and Hessian operator}\label{append.B}
We provide formulas of Euclidean derivative and Hessian operator of the negative log-likelihood $\ell$ defined in (\ref{sec6.eq2}), for each of parameters in $\set{\bar{K}_1,\bar{K}_2,A}$. For $\theta$ among these parameters, we write the derivative and Hessian operator of $\ell$ with respect to $\theta$ by $\partial_{\theta}\ell$ and $\partial_{\theta}^2\ell[V]$, where $V$ is the tangent vector in the manifold on which $\theta$ is living. We introduce the following ancillary quantities:
\begin{align}\label{append.B.eq1}
    \begin{split}
      \alpha_i&=\sigma_i^2(A)/((1-\lambda)\sigma_i^2(A)+\lambda),\quad \tilde{S}=(\bar{K}_2\otimes \bar{K}_1)^{-1}S(\bar{K}_2\otimes \bar{K}_1)^{-\top}/\nu^2,\\
    u_i&: i\text{th top left singular vector of } A\, (i\in [r]),\\
    U_i&=\textsf{mat}_{p_1\times p_2}(u_i),\quad \tilde{C}=(1-\lambda)AA^\top+\lambda I_p,\\
    W_{1,i}&=\bar{K}_1^{-\top}\bar{K}_1^{-1}Y_i \bar{K}_2^{-\top}\bar{K}_2^{-1}Y_i^\top \bar{K}_1^{-\top},\quad W_{1,i,j}=U_j\bar{K}_2^{-1}Y_i^\top \bar{K}_1^{-\top},\\
     W_{2,i}&=\bar{K}_2^{-\top}\bar{K}_2^{-1}Y_i^\top \bar{K}_1^{-\top}\bar{K}_1^{-1}Y_i\bar{K}_2^{-\top},\quad  W_{2,i,j}=U_j^\top \bar{K}_1^{-1}Y_i\bar{K}_2^{-\top}.\\
    \end{split}
\end{align}
Then the Euclidean derivative and Hessian operator of $\ell$ follows from standard facts in matrix calculus. 
\begin{prop}\label{append.B.prop1}
Recall the negative log-likelihood $\ell$ defined in (\ref{sec6.eq2}). Let $\bbF$ denote $\sym$ (resp. $\bbL$) if $\bar{K}_i\in \bbP(\calS_{p_i}^{++})$ (resp. $\bar{K}_i\in \bbP(\calL_{p_i}^{++})$). Also, suppose $V$ is the tangent vector in the manifold on which the parameter among $\set{\bar{K}_1,\bar{K}_2,A}$ is living. With the quantities defined in (\ref{append.B.eq1}), the followings are true:   

    \begin{align*}
        \partial_{\bar{K}_1}\ell&=-\frac{2}{n\lambda\nu^2}\sum_{i=1}^n \bbF(W_{1,i})+\frac{2(1-\lambda)}{n\lambda\nu^2}\sum_{i=1}^n\sum_{j=1}^r \alpha_j\tr\parentheses{W_{1,i,j}}\bbF(\bar{K}_1^{-\top}W_{1,i,j}),\\
    \partial_{\bar{K}_2}\ell&=-\frac{2}{n\lambda\nu^2}\sum_{i=1}^n \bbF(W_{2,i})+\frac{2(1-\lambda)}{n\lambda\nu^2}\sum_{i=1}^n\sum_{j=1}^r \alpha_j\tr\parentheses{W_{2,i,j}}\bbF(\bar{K}_2^{-\top}W_{2,i,j}),\\
    \partial_{A}\ell&=-2(1-\lambda)\tilde{C}^{-1}\tilde{S}\tilde{C}^{-1}A+2(1-\lambda)\tilde{C}^{-1}A,\\
\end{align*}
and
\begin{align*}
        \partial_{\bar{K}_1}^2 \ell[V]&=\frac{2}{n\lambda\nu^2}\sum_{i=1}^n \bbF(\bar{K}_1^{-\top}V^\top W_{1,i}+W_{1,i}V^\top \bar{K}_1^{-\top}+\bar{K}_1^{-\top}\bar{K}_1^{-1}V\bar{K}_1^\top W_{1,i})\\
        &-\frac{2(1-\lambda)}{n\lambda\nu^2}\sum_{i=1}^n\sum_{j=1}^r \alpha_j\tr\parentheses{W_{1,i,j}V^\top \bar{K}_1^{-\top}}\bbF(\bar{K}_1^{-\top}W_{1,i,j}),\\
         &-\frac{2(1-\lambda)}{n\lambda\nu^2}\sum_{i=1}^n\sum_{j=1}^r \alpha_j\tr\parentheses{W_{1,i,j}}\bbF(\bar{K}_1^{-\top}V^\top \bar{K}_1^{-\top} W_{1,i,j}+\bar{K}_1^{-\top} W_{1,i,j}V^\top \bar{K}_1^{-\top}),\\
       \partial_{\bar{K}_2}^2 \ell[V]&=\frac{2}{n\lambda\nu^2}\sum_{i=1}^n \bbF(\bar{K}_2^{-\top}V^\top W_{2,i}+W_{2,i}V^\top \bar{K}_2^{-\top}+\bar{K}_2^{-\top}\bar{K}_2^{-1}V\bar{K}_2^\top W_{2,i})\\
        &-\frac{2(1-\lambda)}{n\lambda\nu^2}\sum_{i=1}^n\sum_{j=1}^r \alpha_j\tr\parentheses{W_{2,i,j}V^\top \bar{K}_2^{-\top}}\bbF(\bar{K}_2^{-\top}W_{2,i,j}),\\
         &-\frac{2(1-\lambda)}{n\lambda\nu^2}\sum_{i=1}^n\sum_{j=1}^r \alpha_j\tr\parentheses{W_{2,i,j}}\bbF(\bar{K}_2^{-\top}V^\top \bar{K}_2^{-\top} W_{2,i,j}+\bar{K}_2^{-\top} W_{2,i,j}V^\top \bar{K}_2^{-\top}),\\
              \partial_{A}^2 \ell[V]&=-2(1-\lambda)\tilde{C}^{-1}\tilde{S}\tilde{C}^{-1}V+2(1-\lambda)\tilde{C}^{-1}V\\
              &+2(1-\lambda)^2\tilde{C}^{-1}(AV^\top+VA^\top)\tilde{C}^{-1}\tilde{S}\tilde{C}^{-1}A\\
              &+2(1-\lambda)^2\tilde{C}^{-1}\tilde{S}\tilde{C}^{-1}(AV^\top+VA^\top)\tilde{C}^{-1}A\\
              &-2(1-\lambda)^2\tilde{C}^{-1}(AV^\top+VA^\top)\tilde{C}^{-1}A.
\end{align*}
\end{prop}
\begin{proof}
    The results follow from some tedious algebra (see \cite{peterson2012} for example).
\end{proof}

\bibliographystyle{plain}
\bibliography{riemmCore}

\end{document}